\theoremstyle{thmstyleone}%
\newtheorem{theorem}{Theorem}%  meant for continuous numbers
\theoremstyle{thmstyletwo}%
\newtheorem{remark}{Remark}%
\theoremstyle{thmstylethree}%
\newtheorem{lemma}[theorem]{Lemma}
\newtheorem{corollary}[theorem]{Corollary}
\newtheorem{assumption}[theorem]{Assumption}
\def \PSI {\Psi_{c, \kappa, u}}
\def\vg#1{{#1}}
\newcommand{\R}{\mathbb{R}}
\newcommand\swapifbranches[3]{#1{#3}{#2}}
\patchcmd{\DeclarePairedDelimiter}{\@ifstar}{\swapifbranches\@ifstar}{}{}
\DeclarePairedDelimiterX{\inp}[2]{\langle}{\rangle}{#1, #2}
\DeclarePairedDelimiterX{\abs}[1]{\lvert}{\rvert}{#1}
\DeclarePairedDelimiterX{\roundup}[1]{\lceil}{\rceil}{#1}
\DeclarePairedDelimiterX{\norm}[1]{\lVert}{\rVert}{#1}
\DeclarePairedDelimiterX{\cbr}[1]{\{}{\}}{#1} % curly bracket
\DeclarePairedDelimiterX{\rbr}[1]{(}{)}{#1} % round bracket
\DeclarePairedDelimiterX{\sbr}[1]{[}{]}{#1} % 
\newcommand{\moh}[1]{{#1}}
\newcommand{\sX}{{\R^{d_x}}}
\newcommand{\sY}{{\R^{d_y}}}
\newcommand{\sZ}{{\R^{d}}}
\begin{document}

\title[On Solving Minimization and Min-Max Problems
by First-Order Methods with Relative Error in Gradients]{On Solving Minimization and Min-Max Problems
by First-Order Methods with Relative Error in Gradients}

%%=============================================================%%
%% GivenName	-> \fnm{Joergen W.}
%% Particle	-> \spfx{van der} -> surname prefix
%% FamilyName	-> \sur{Ploeg}
%% Suffix	-> \sfx{IV}
%% \author*[1,2]{\fnm{Joergen W.} \spfx{van der} \sur{Ploeg} 
%%  \sfx{IV}}\email{iauthor@gmail.com}
%%=============================================================%%

% \author*[1,2]{\fnm{First} \sur{Author}}\email{iauthor@gmail.com}

% \author[2,3]{\fnm{Second} \sur{Author}}\email{iiauthor@gmail.com}
% \equalcont{These authors contributed equally to this work.}

% \author[1,2]{\fnm{Third} \sur{Author}}\email{iiiauthor@gmail.com}
% \equalcont{These authors contributed equally to this work.}

% \affil*[1]{\orgdiv{Department}, \orgname{Organization}, \orgaddress{\street{Street}, \city{City}, \postcode{100190}, \state{State}, \country{Country}}}

% \affil[2]{\orgdiv{Department}, \orgname{Organization}, \orgaddress{\street{Street}, \city{City}, \postcode{10587}, \state{State}, \country{Country}}}

% \affil[3]{\orgdiv{Department}, \orgname{Organization}, \orgaddress{\street{Street}, \city{City}, \postcode{610101}, \state{State}, \country{Country}}}

\author[1]{\fnm{Artem} \sur{Vasin}}
\equalcont{These authors contributed equally to this work.}

\author[1]{\fnm{Valery} \sur{Krivchenko}}
\equalcont{These authors contributed equally to this work.}

\author[2]{\fnm{Dmitry} \sur{Kovalev}}

\author[1,3]{\fnm{Fedyor} \sur{Stonyakin}}

\author*[4]{\fnm{Nazarii} \sur{Tupitsa}}
\email{nazarii.tupitsa[at]mbzuai.ac.ae}

\author[5]{\fnm{Pavel} \sur{Dvurechensky}}

\author[3]{\fnm{Mohammad} \sur{Alkousa}}

\author[1,6]{\fnm{Nikita} \sur{Kornilov}}

\author[3,1,7,8]{\fnm{Alexander} \sur{Gasnikov}}

\affil[1]{\orgdiv{Department of Applied Mathematics}, \orgname{Moscow Institute of Physics and Technology (MIPT)}, \orgaddress{\city{Moscow}, \country{Russia}}}

\affil[2]{\orgname{Yandex \& Institute for System Programming of the Russian Academy of Sciences (ISP RAS)}, \orgaddress{\city{Moscow}, \country{Russia}}}

\affil[3]{\orgname{Innopolis University}, \orgaddress{\city{Innopolis}, \country{Russia}}}

\affil[4]{\orgname{Mohamed bin Zayed University of Artificial Intelligence (MBZUAI)}, \orgaddress{\city{Abu Dhabi}, \country{United Arab Emirates}}}

\affil[5]{\orgname{Weierstrass Institute for Applied Analysis and Stochastics (WIAS)}, \orgaddress{\city{Berlin}, \country{Germany}}}

\affil[6]{\orgname{Skolkovo Institute of Science and Technology (Skoltech)}, \orgaddress{\city{Moscow}, \country{Russia}}}

\affil[7]{\orgname{Steklov Mathematical Institute of the Russian Academy of Sciences}, \orgaddress{\city{Moscow}, \country{Russia}}}

\affil[8]{\orgname{Institute for Information Transmission Problems}, \orgaddress{\city{Moscow}, \country{Russia}}}

%%==================================%%
%% Sample for unstructured abstract %%
%%==================================%%

% \abstract{First-order methods for minimization and saddle point (min-max) problems are one of the cornerstones of modern ML. The majority of works obtain favorable complexity guarantees of such methods, assuming that exact gradient information is available. At the same time, even the use of floating-point representation of real numbers already leads to relative error in all the computations. Relative errors also arise in such applications as bilevel optimization, inverse problems, derivative-free optimization, and inexact proximal methods. This paper answers several theoretical open questions on first-order optimization methods under relative errors in the first-order oracle. We propose an explicit single-loop accelerated gradient method that preserves optimal linear convergence rate under maximal possible relative error in the gradient, and explore the tradeoff between the relative error and deterioration in the linear convergence rate. We further explore similar questions for saddle point problems and nonlinear equations, showing, for the first time in the literature, that a variant of gradient descent-ascent and the extragradient method are robust to such errors and providing estimates for the maximum level of noise that does not break linear convergence.}

\abstract{First-order methods for minimization and saddle point (min-max) problems are widely used for solving large-scale problems, in particular arising in machine learning. The majority of works obtain favorable complexity guarantees of such methods, assuming that exact gradient information is available. At the same time, even the use of floating-point representation of real numbers already leads to relative error in all the computations. Relative errors also arise in such applications as bilevel optimization, inverse problems, derivative-free optimization, and inexact proximal methods. This paper answers several theoretical open questions on first-order optimization methods under relative errors in the first-order oracle. We propose an explicit single-loop accelerated gradient method that preserves optimal linear convergence rate under maximal possible relative error in the gradient, and explore the tradeoff between the relative error and deterioration in the linear convergence rate. We further explore similar questions for saddle point problems and nonlinear equations, showing, for the first time in the literature, that a variant of gradient descent-ascent and the extragradient method are robust to such errors and providing estimates for the maximum level of noise that does not break linear convergence.}

\keywords{first-order methods, inexact gradients, nonlinear equations, saddle point problems}

%%\pacs[JEL Classification]{D8, H51}

%%\pacs[MSC Classification]{35A01, 65L10, 65L12, 65L20, 65L70}

\maketitle

\section{Introduction} \label{sec:introduction}

Numerical methods in general and continuous optimization algorithms in particular inevitably rely on the floating-point representation of real numbers, which is the most common and practical way to handle real numbers in computational systems. This approach allows for efficient storage and arithmetic operations while accommodating a wide range of values, albeit with some precision limitations, described by IEEE754 standard~\cite{IEEEStd}. As described in~\cite{overton2001numerical} the approximation error of a positive real number $x$ with a floating-point number $\hat x$ is given by 
\begin{eqnarray} \label{eq:fp-approx}
    |x - \hat{x} | \leqslant \delta |x|,
\end{eqnarray}
where $\delta = 2^{-p}$ is called machine delta, and a precision $p$ is a number of bits used for the mantissa (or significant digits) in the floating-point representation. Indeed, we can denote a number in floating-point format as:
\begin{equation} \label{floating def}
    \begin{gathered}    
        \hat{x} = (-1)^s B \cdot 2^{E - E_0}, \quad
        B = 1.b_0b_1 \dots b_{p}, \; b_j \in \lbrace 0, 1 \rbrace, \; s \in \lbrace 0, 1 \rbrace.
    \end{gathered}
\end{equation}
We call machine delta $\delta$ such gap between $1$ and next larger floating point number. One can see, that $\delta = 0.0\dots1 = 2^{-p}$. Using notation from~\cite{overton2001numerical} we can define unit in the last place function: $\text{ulp}(\hat{x}) = 0.0\dots1 \cdot 2^{E - E_0} = \delta 2^{E - E_0}$. Note, that for positive floating-point numbers $\hat{x}$, value $\hat{x} + \text{ulp}(\hat{x})$ will be next larger floating point number.

Estimating the error of approximation for a positive real number $x$ with floating-point numbers~\eqref{floating def}, we obtain:
%\begin{equation*}
    $|x - \hat{x} | \leqslant \text{ulp}(\hat{x}) = \delta 2^{E - E_0} \leqslant \delta B 2^{E - E_0} \leqslant \delta \hat{x} \leqslant \delta |x|$. 
%\end{equation*}
Here we assume, that $\hat{x}$ is a lower estimation for $x$. We see that $\delta$ is a  bound for the relative error of this  approximation. In other words, the use of floating-point numbers of the IEEE754 standard leads to relative errors in all the computations. For example, using single-precision 32-bit float (FP32) ($p = 23, E_0 = 127, 0 \leqslant E \leqslant 255$) we obtain $\delta = 2^{-23} \approx 1.19e-07$, double-precision 64-bit float (FP64) ($p = 52, E_0 = 1023, 0 \leqslant E \leqslant 2047$) we obtain $\delta = 2^{-52} \approx 2.22e-16$, half-precision 16-bit float (FP16) ($p = 10, E_0 = 15, 0 \leqslant E \leqslant 31$) we obtain $\delta = 2^{-10} \approx 9.77e-04$. 
%Although in the process of training neural networks, it is also possible to use not only FP32 and then use quantization, but also to immediately use FP16, which does not significantly reduce the quality of the model, as was shown at~\cite{yun2024standalone16bittrainingmissing}.

From inequality~\eqref{eq:fp-approx}, it is evident that the approximation error is relative rather than absolute, meaning that it depends on the magnitude of the number $x$. The study of the influence of relative error is motivated by modern applications, where a less accurate representation of real numbers such as quantization is widely used~\cite{alistarh2017qsgd,gholami2022survey,rabbat2005quantized} to reduce model size and improve inference speed, maintaining efficiency~\cite{yun2024standalone16bittrainingmissing}, but increasing relative error. Such relative errors potentially influence all the machine computations, of which we are particularly concerned with optimization in a wider sense.

\textbf{Optimization with inexact gradients.} First, motivated by the widespread use of optimization methods in ML, we consider an unconstrained optimization problem
\begin{equation}\label{optim}
    \min\limits_{x \in \mathbb{R}^d} f(x),
\end{equation}
where $f: \mathbb{R}^d \longrightarrow \mathbb{R}$ is continuously differentiable function, and focus on gradient-type methods with relative
error (or relative noise) in the gradient. More precisely, following~\cite{polyak1987introduction}, we assume that an algorithm has access to an inexact gradient $\widetilde{\nabla} f(x)$, that for some $\alpha \in [0,1)$ and all $x$ satisfies
\begin{eqnarray}\label{eq:relative_error_intro}
    \|\widetilde{\nabla} f(x) - \nabla f(x)\|_2 \leqslant \alpha \|\nabla f(x)\|_2.
\end{eqnarray}

Apart from floating-point representation, relative error is natural for implicit optimization problems where evaluating the gradient or objective value requires solving an auxiliary problem that cannot be solved exactly. For example, in PDE-constrained optimization, the constraints are given by partial differential equations (PDEs), which add significant computational challenges \cite{baraldi2023proximal, hintermuller2020convexity}. Similarly, in bilevel optimization, the upper-level problem includes constraints defined by the solution to a lower-level problem, leading to additional inexactness \cite{solodov2007explicit,sabach2017first,petrulionyte2024functional}. Inexact gradients are also common in optimal control and inverse problems, where solving ODEs or PDEs is needed to compute the gradient of the objective function \cite{matyukhin2021convex}. Also, relative errors appear in the analysis of gradient approximation in Derivative Free Optimization~\cite{berahas2021theoretical}.
Last but not least, relative error often appears in inexact proximal methods that are used as an envelope to construct optimal methods in different contexts, see, e.g.,~\cite{lin2018catalyst,kovalev2022first}. Then a non-optimal method is used to evaluate inexact proximal step and the inexactness is measured relatively. Yet, unlike our paper, the first-order oracle in these methods is assumed to be exact.
For a deeper exploration of gradient inexactness, refer to \cite{devolder2013intermediate, polyak1987introduction, stonyakin2021inexact, vasin2023accelerated, kornilov2023intermediate, hallak24study} and the references therein.
We underline that most of these works focus on absolute error in the gradients, and the literature covering the setting of relative error \eqref{eq:relative_error_intro} is quite scarce. 
Given the above applications, it is important to understand what theoretical guarantees it is possible to obtain for optimization algorithms under relative inexactness.

\begin{remark}
    It may seem that the relative inexactness assumption is quite strong since it implies that the quality of the approximate gradient $\widetilde{\nabla} f(x)$ improves as the algorithm converges to a stationary point where $\nabla f(x)=0$. At the same time, this assumption holds in a number of applications, in particular, listed above, i.e., it is possible to improve the quality of approximation to establish convergence. Moreover, if such improvement is not possible, then Assumption~\eqref{eq:relative_error_intro} may still hold at points where the values of the gradient are not too small, see, e.g., \cite{berahas2021theoretical}. In this case, our theoretical results establish convergence rates on early stages of the method's work. Finally, our main theoretical question can be formulated as follows: What are the conditions for first-order algorithms to converge under relative inexactness in the gradient? Intuitively, from the situation in the world of absolute inexactness \cite{devolder2013intermediate,stonyakin2021inexact}, one may expect that to establish convergence, the error in the gradient should decrease on the trajectory of the algorithm.  
\end{remark}
% \na{Should we keep details in green in the Appendix?}
% \pd{I would keep them here for now.}

\textbf{Related works on optimization.} 
Assumption~\eqref{eq:relative_error_intro} was first used  alongside the Polyak-{\L}ojasiewicz (P{\L}) condition and $L$-smoothness to establish a classical convergence result for the gradient method~\cite{polyak1987introduction}. The book~\cite{bertsekas1997nonlinear} examines gradient methods under different gradient error conditions, including relative error, and provides classical convergence guarantees for $L$-smooth convex functions.
For the same setting, the authors of~\cite{vernimmen2025worst} establish a lower bound $\| \nabla f(x^N) \|^2_2=\Omega((N(1 - \alpha))^{-1})$ after $N$ iterations using the performance estimation techniques.
A more recent result by~\cite{hallak24study} includes a coordinate-wise version of Assumption~\eqref{eq:relative_error_intro} and considers constrained optimization problems without any structural assumptions such as P{\L} condition or strong convexity.

Only a few results are available in the literature on the convergence of accelerated gradient methods under relative inexactness.
In this paper, we focus on the $L$-smooth and $\mu$-strongly convex case, as considered in the recent works~\cite{gannot2021frequency,vasin2023accelerated}. Under assumption that $\alpha \lesssim \left(\mu/L\right)^{3/4}$ these works propose accelerated gradient methods that require optimal number of oracle calls $\mathcal{O} \rbr*{\sqrt{\frac{L}{\mu}} \log_2 \left( \frac{1}{\varepsilon} \right)}$  to find $\hat{x}$ s.t. $f(\hat{x}) - f(x^*) \leqslant \varepsilon$. Later, using a nested-loop algorithm,~\cite{kornilov2023intermediate} improved to $\alpha \lesssim \left(\mu/L\right)^{1/2}$ the bound on maximum tolerated inexactness that still allows achieving optimal oracle complexity. We improve on these results by proposing a single-loop algorithm and exploring tradeoff between the error and convergence rate.

\textbf{Min-Max optimization and nonlinear equations under inexactness.} Another class of problems important for ML and other applications 
%\textcolor{red}{[...]} 
is the class of saddle point problems (SPPs)
\begin{equation}\label{spp}
    \min\limits_{x \in \mathbb{R}^{d_x}} \max\limits_{y \in \mathbb{R}^{d_y}} f(x,y),
\end{equation}
where $f$ is continuously differentiable function. 
This problem can be seen as a particular case of a more general variational inequality problem which in the unconstrained setting takes the form of nonlinear equation (NE):
\begin{equation}\label{ne}
    g(z)=0,
\end{equation}
where $g:\R^d \to \R^d$ is continuously differentiable vector-function. 
This problem is also very relevant to ML applications, see, e.g., the recent reviews \cite{tran-dinh2024extragradient,tran-dinh2025accelerated} and the references therein. 
%\textcolor{red}{[...]}. 
We are not aware of any previous works that study the behavior of first-order methods with relative error in the partial gradients of $f$ for SPPs and in the values of $g$ for NEs. Our work provides the first results in this direction for SPPs when $f$ is 
% \na{As Pavel suggerted, add referecnes to assumptions and maybe short comment (spp and ne)}
strongly-convex-strongly-concave and has Lipschitz gradients (Assumptions \ref{as:conv-conc} or \ref{as:long}) and for NEs when $g$ is quasi strongly monotone and Lipschitz (\cref{as:lipqsm}). 
%These assumption are standard for considered problems.
% \na{should it quasi strongly monotone}

All the above motivates us to consider in this paper the following research question: how robust to relative noise are first-order methods and, in particular, is there a tradeoff between the oracle complexity and error in the oracle. Toward this end, our contributions can be summarized as follows.
%In this paper, we propose an accelerated method that also requires $\mathcal{O} \rbr*{\sqrt{\frac{L}{\mu}} \log_2 \left( \frac{1}{\varepsilon} \right)}$ oracle calls and does not rely on a restart technique, under the condition that $\alpha \lesssim \left(\mu/L\right)^{1/2}$.

\textbf{Contributions.} \underline{Optimization.} For solving problem \eqref{optim}, we propose an explicit single-loop linearly convergent accelerated gradient method that has the optimal oracle complexity $\mathcal{O} \rbr*{\sqrt{\frac{L}{\mu}} \log_2 \left( \frac{1}{\varepsilon} \right)}$ for the error $\alpha \lesssim \left(\mu/L\right)^{1/2}$. We further show that for larger values of $\alpha$ up to $\alpha < 1/3$ this method still has linear convergence rate, we explicitly estimate this rate and quantify the tradeoff between oracle complexity and the value of $\alpha$. \underline{SPPs.} For solving problem \eqref{spp}, we provide the first analysis of a variant of gradient descent-ascent (called Sim-GDA) under relative error in the partial gradients of $f$. We show that this algorithm has linear convergence for the error $\alpha < \mu/L$ (see the precise definitions of $\alpha,\mu,L$ in Section \ref{section min max}). We further, refine this result for the setting where the condition numbers w.r.t. $x$ and w.r.t. $y$ are different. \underline{NEs.} For solving problem \eqref{ne}, we provide the first analysis of ExtraGradient algorithm under relative error in the operator $g$. Surprisingly, we show that this algorithm has optimal oracle complexity $\mathcal{O} \rbr*{\frac{L}{\mu} \log_2 \left( \frac{1}{\varepsilon} \right)}$ for the error $\alpha \lesssim \left(\mu/L\right)^{1/2}$ (see Section \ref{sec:ne} for the precise definition of $\alpha,\mu,L$), demonstrating that a deep connection between ExtraGradient and accelerated method goes beyond the setting of exact first-order information \cite{Cohen2020RelativeLI}. As a corollary, when applied to SPPs, this result uncovers an interesting phenomenon: ExtraGradient algorithm is more robust than Sim-GDA and Alt-GDA in the strongly-convex-strongly-concave setting. We confirm this also by numerical experiments.

%In this paper, we propose studies of an accelerated methods with relative error~\eqref{eq:relative_error_intro} .
% \begin{itemize}
%     \item The method requires $\mathcal{O} \rbr*{\sqrt{\frac{L}{\mu}} \log_2 \frac{1}{\varepsilon}}$ calls to achieve $\varepsilon$ accuracy for $\alpha \lesssim \left(\mu/L\right)^{1/2}$
%     \item The method is explicit, requiring no restarts
%     \item The method maintains linear convergence with a non-optimal rate for any $\alpha\lesssim\frac{\mu}{L}$
% \end{itemize}

\textbf{Paper ogranization.} 
% The paper is structured as follows. 
% Section~\ref{section accelerated main} presents the accelerated Algorithm~\ref{alg:nesterov corrected} and establishes its convergence guarantees through Theorems~\ref{nesterov acc corrected convergence} and \ref{nesterov acc corrected convergence tau}  for  minimization problems. Sections~\ref{section min max} and~\ref{sec:ne} explore the impact of inexact gradients on saddle point problems, analyzing the robustness of Sim-GDA and ExtraGradient (EG) methods respectively.
% At the begging of each section we introduce assumptions and preliminaries necessary for our analysis. 
% Finally, Section~\ref{sec:numerical_results} provides numerical experiments supporting our theoretical findings. Proofs and additional experiments can be found in the Appendix.
Section~\ref{section accelerated main} is devoted to optimization problem \eqref{optim} and gives theoretical results on our Relative Error Accelerated Gradient Method. Sections~\ref{section min max} and~\ref{sec:ne} explore the impact of inexact gradients on SPPs \eqref{spp} and NEs \eqref{ne} analyzing the robustness of Sim-GDA, Alt-GDA and ExtraGradient (EG) methods respectively.
% At the begging of each section we introduce assumptions and preliminaries necessary for our analysis. 
Finally, Section~\ref{sec:numerical_results} provides numerical experiments supporting our theoretical findings. Proofs and additional experiments can be found in the Appendix.

\section{Minimization problems}
\label{section accelerated main}

% \subsection{Preliminaries}\label{sec:preliminaries}
We make the following assumptions on the class of optimized functions in \eqref{optim}.
% We define $f^*$ - as minimum value of $f$ or solution for problem~\eqref{optim} and $x^*$: $f(x^*) = f^*$, also for iterative methods with starting point $x^0$ we can define $R =  \|x^0 - x^* \|_2$.
% \begin{assumption}[Convexity]\label{as:cvx}
%     The objective function $f$ is convex, i.e.
%     \begin{equation*}
%     f(y) \geq f(x)+\langle\nabla f(x), y-x\rangle, \quad \forall x, y \in \mathbb{R}^d.
%     \end{equation*}
% \end{assumption}

% \begin{assumption}[$L$-smootheness]\label{as:Lsmooth}
%     $f$ is $L$-smooth, i.e., $\forall\; x,y \in \R^d$
%     \begin{eqnarray}\label{smoothness_cond}
%         f(y) \leq f(x)+ \left\langle\nabla f(x), y-x\right\rangle + \frac{L}{2} \|y-x\|_2^2.
%     \end{eqnarray}
%     % or equivalently 
%     % \begin{equation}\label{eq_6}
%     %     \|\nabla f(y) - \nabla f(x)\|_2 \leq L \|y - x\|_2.
%     % \end{equation}
% \end{assumption}

%  \begin{assumption}[Strong convexity]\label{as:str_cvx}
%     $f$ is $\mu$-strongly convex, i.e., $\forall\; x,y \in \R^d$
%     \begin{eqnarray}\label{eq:str_cvx} 
%         f(y) \geq f(x) + \langle \nabla f(x), y - x \rangle + \frac{\mu}{2}\|y - x\|_2^2.
%     \end{eqnarray}
% \end{assumption}

\begin{assumption}\label{as:Lsmooth+muConv}
$f$ is $L$-smooth and $\mu$-strongly convex, i.e., $\forall x,y \in \R^d$
\begin{eqnarray*}
    \frac{\mu}{2}\|y - x\|_2^2 \leqslant f(y) - f(x) - \left\langle\nabla f(x), y-x\right\rangle\leqslant  \frac{L}{2} \|y-x\|_2^2.
    % \label{smoothness_cond}
    % \\&f(y) - f(x) - \left\langle\nabla f(x), y-x\right\rangle \geqslant  \frac{\mu}{2}\|y - x\|_2^2. \label{eq:str_cvx} 
\end{eqnarray*}
\end{assumption}

We also make the following assumption about the inexact gradients of the objective function $f$.
\begin{assumption}[Relative noise]\label{as:noise}
We assume access to an inexact gradient $\widetilde{\nabla} f(x)$ of $f$ with relative error, meaning that there exists $\alpha \in [0, 1)$ s.t.
\begin{eqnarray}\label{eq:relative_error}
    \|\widetilde{\nabla} f(x) - \nabla f(x)\|_2 \leqslant \alpha \|\nabla f(x)\|_2, \forall x \in \R^d.
\end{eqnarray}
\end{assumption}

\begin{corollary}\label{growth condition a} 
Assumption~\ref{as:noise} for all \moh{$\alpha \in [0, 1)$} implies the following
\begin{eqnarray*} 
% \label{growth condition}
% \begin{gathered}
    \nu \| \nabla f(x) \|_2 \leqslant \| \widetilde{\nabla} f(x) \|_2 \leqslant \rho \|\nabla f(x) \|_2, 
    \quad
    \langle \widetilde{\nabla} f(x), \nabla f(x) \rangle \geqslant \gamma \|\widetilde{\nabla} f(x) \|_2 \|\nabla f(x) \|_2,
    % \label{growth condition b}
    % \end{gathered}
\end{eqnarray*}
where $\gamma = \sqrt{1 - \alpha^2}, \nu = 1 - \alpha, \rho = 1 + \alpha$.
\end{corollary}
By strong convexity, there exists a unique solution $x^*$ of problem \eqref{optim}, we denote $f^* = f(x^*)$.

%\subsection{Accelerated gradient method} \label{section accelerated}
We next improve upon the results of \cite{gannot2021frequency,vasin2023accelerated,kornilov2023intermediate} by proposing a single-loop accelerated method with an inexact gradient that provably achieves optimal complexity bounds under Assumption~\ref{as:noise} with $\alpha \lesssim \sqrt{\frac{\mu}{L}}$. The algorithm is listed as Algorithm~\ref{alg:nesterov corrected} and can be interpreted as a special modification of an algorithm from~\cite{nesterov2018lectures}: we carefully construct stepsizes that take into account the error in the gradients.
The main results of this section are given by Theorems~\ref{nesterov acc corrected convergence} and~\ref{nesterov acc corrected convergence tau} that present the convergence guarantees for Algorithm~\ref{alg:nesterov corrected}. Denote $R := \|x^0 - x^* \|_2$ for a starting point $x^0$.

\begin{algorithm}[tb]
    \caption{ RE-AGM (Relative Error Accelerated Gradient Method).} 
    \label{alg:nesterov corrected}
    \begin{algorithmic}[1]
        \STATE {\bfseries Input:} $(L, \mu, x_{\operatorname{start}}, \alpha)$.
        \STATE {\bf Set} $h = \frac{1}{L}\left(\frac{1 - \alpha}{1 + \alpha} \right)^\frac{3}{2}$,
        %\State {\bf Set} $\kappa_0 = L$.
        % \STATE {\bf Set} 
        $\widehat{L} = \frac{L(1 + \alpha)}{(1 - \alpha)^3}$, 
        % \STATE {\bf Set} 
        $x^0 = u^0 = x_{\operatorname{start}}$, 
        % $x^0 = u^0$,
        % \STATE {\bf Set} 
        $s = 1+ 2\alpha + 2\alpha^2$, %$s = \rho^2 + \alpha^2$.
        % \STATE {\bf Set}
        $m = 1-2\alpha$, %$m = \nu^2 - \alpha^2$, 
        % \STATE {\bf Set}
        $q = \mu / \widehat{L}$.
        %\State {\bf Set} $\theta^2 = \frac{1}{1 + \zeta} - \frac{\alpha^2}{\zeta}$.
        %\State {\bf Set} $K = \frac{\widehat{L}}{\mu} \left(s - \frac{\theta^2}{1 + \zeta} \right)$.
        \FOR {$k = 0, \dots, N - 1$}
        \STATE $
            a_k = \frac{(m - s) + \sqrt{(s - m)^2 +4mq }}{2 m}
        $, $\quad$i.e. the largest root of $
           m a_k^2 + (s - m) a_k - q = 0,  %m a_k (1 - a_k) - s a_k = -q,
        $
        % \moh{\STATE Solve the quadratic equation 
        % %\State $(s \widehat{L} - K \kappa_k) a_k^2 + \left((1 + K) \kappa_k - \mu \right) a_k - % \kappa_k = 0,$
        % %\STATE 
        % $
        %    m a_k^2 + (s - m) a_k - q = 0,  %m a_k (1 - a_k) - s a_k = -q,
        % $
        % \\
        % and take the largest root, i.e., 
        % %\State $a_k = \frac{\left(\mu - (1 + K) \kappa_k \right) + \sqrt{\left(\mu - (1 + K) 
        % %\kappa_k \right)^2 + \kappa_k \left(s \widehat{L} - K \kappa_k \right)}}{2 \left(s %\widehat{L} - K \kappa_k \right)},$
        % %\STATE 
        % % \begin{equation}\label{a_kAlg1}
        % $
        %     a_k = \frac{(m - s) + \sqrt{(s - m)^2 +4mq }}{2 m},
        % $
        % % \end{equation}
        % }
        % \State $\eta_k = \frac{\kappa_k a_k (1 - a_k)}{\kappa_k (1 - a_k) + \mu a_k},$
        % \State $\kappa_{k + 1} = (1 - a_k) \kappa_k + a_k \mu,$
        \STATE $y^k = \frac{a_k u^k + x^k}{1 + a_k}$, 
        % \STATE
        $\quad u^{k + 1} = (1 - a_k) u^{k} + a_k y^k - \frac{a_k}{\mu} \widetilde{\nabla} f(y^{k})$, 
        % \STATE
        $\quad x^{k + 1} = y^{k} - h \widetilde{\nabla} f(y^{k})$.
        \ENDFOR
        \STATE 
        \noindent {\bf Output:} $x^N$.
    \end{algorithmic}
\end{algorithm}

% For Theorem~\ref{nesterov acc corrected convergence} and Theorem~\ref{nesterov acc corrected convergence tau} we will use Lemmas from Appendix~\ref{appendix accelerated}.

\begin{theorem} \label{nesterov acc corrected convergence}
Let Assumption~\ref{as:Lsmooth+muConv} hold.
% $L$-smooth~\eqref{smoothness_cond} and $\mu$ strongly convex~\eqref{as:str_cvx}, $\widetilde{\nabla}f$ satisfies~\eqref{eq:relative_error}, 
If Assumption~\ref{as:noise} holds with $\alpha < \frac{\sqrt{2} - 1}{18 \sqrt{2}} \sqrt{\frac{\mu}{L}}$, then Algorithm~\ref{alg:nesterov corrected} with parameters ($L, \frac{\mu}{2}, x^0, \alpha)$ generates $x^N$ s.t.
\begin{equation*}
    f(x^N) - f^* \leqslant L R^2 \left( 1 - \frac{1}{10\sqrt{2}} \sqrt{\frac{\mu}{L}} \right)^N.
\end{equation*}
\end{theorem}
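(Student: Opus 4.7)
The plan is to carry out a Lyapunov / estimating-sequence analysis and establish a one-step contraction
\begin{equation*}
V_{k+1}\leqslant(1-a_k)V_k,\qquad V_k \eqdef f(x^k)-f^{*}+\tfrac{\mu}{4}\|u^k-x^{*}\|_{2}^{2},
\end{equation*}
with weight $\mu/4$ since Algorithm~\ref{alg:nesterov corrected} is invoked with strong-convexity parameter $\mu'\eqdef\mu/2$. Bounding $V_0\leqslant\tfrac{L}{2}R^{2}+\tfrac{\mu}{4}R^{2}\leqslant L R^{2}$ (by $L$-smoothness at $x^{*}$ together with $u^0=x^0$) and telescoping will then give $f(x^N)-f^{*}\leqslant V_N\leqslant LR^{2}\prod_{k=0}^{N-1}(1-a_k)$, so it only remains to verify $a_k\geqslant\tfrac{1}{10\sqrt{2}}\sqrt{\mu/L}$.

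Three ingredients go into the one-step contraction. First, an $x$-step descent: combining $L$-smoothness at $y^k$ with $x^{k+1}-y^k=-h\widetilde{\nabla} f(y^k)$ and the bounds from the Corollary (in particular $\langle\nabla f,\widetilde{\nabla} f\rangle\geqslant(\gamma/\rho)\|\widetilde{\nabla} f\|_{2}^{2}$ with $\gamma=\sqrt{\nu\rho}$), the specific stepsize $h=(\nu/\rho)^{3/2}/L$ yields a descent $f(x^{k+1})\leqslant f(y^k)-\theta\|\widetilde{\nabla} f(y^k)\|_{2}^{2}$ for an explicit $\theta=\theta(\alpha,L)>0$ of order $1/\widehat L$, absorbing the relative error into $\widehat L=\rho L/\nu^{3}$. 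Second, a Nesterov-style upper bound for $f(y^k)$: applying $\mu$-strong convexity at $y^k$ to both $x^k$ and $x^{*}$ and using the identity $a_k(u^k-y^k)=y^k-x^k$ (immediate from the definition of $y^k$) produces
\begin{align*}
f(y^k)\leqslant{}&(1-a_k)f(x^k)+a_kf^{*}\\
&{}+(1-a_k)\langle\nabla f(y^k),y^k-x^k\rangle\\
&{}+a_k\langle\nabla f(y^k),y^k-x^{*}\rangle-a_k\mu'\|y^k-x^{*}\|_{2}^{2}.
\end{align*}
Third, an expansion of $\tfrac{\mu'}{2}\|u^{k+1}-x^{*}\|_{2}^{2}$ from the $u$-update, together with convexity of $\|\cdot\|^{2}$ on the combination $(1-a_k)u^k+a_ky^k$ and the identity
\begin{equation*}
(1-a_k)(u^k-x^{*})+a_k(y^k-x^{*})=(y^k-x^{*})+\tfrac{1-a_k}{a_k}(y^k-x^k),
\end{equation*}
produces $\widetilde{\nabla} f$-cross-terms that mirror exactly those of the Nesterov estimate. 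Summing the three ingredients and subtracting $(1-a_k)V_k$ cancels the convex-combination pieces, leaving
\begin{align*}
V_{k+1}-(1-a_k)V_k\leqslant{}&(1-a_k)\langle\nabla f-\widetilde{\nabla} f,y^k-x^k\rangle\\
&+a_k\langle\nabla f-\widetilde{\nabla} f,y^k-x^{*}\rangle\\
&-\tfrac{a_k\mu'}{2}\|y^k-x^{*}\|_{2}^{2}\\
&+\Bigl(\tfrac{a_k^{2}}{2\mu'}-\theta\Bigr)\|\widetilde{\nabla} f(y^k)\|_{2}^{2}.
\end{align*}

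The hard part, and the main obstacle, is to show that this right-hand side is nonpositive. Each error inner product is first bounded via Assumption~\ref{as:noise} ($\|\nabla f-\widetilde{\nabla} f\|_{2}\leqslant\alpha\|\nabla f\|_{2}\leqslant(\alpha/\nu)\|\widetilde{\nabla} f\|_{2}$) and then split, by Young's inequality with a carefully tuned parameter, between a multiple of $\|\widetilde{\nabla} f\|_{2}^{2}$ (absorbed by $\theta\|\widetilde{\nabla} f\|_{2}^{2}$) and a multiple of $\|y^k-x^{*}\|_{2}^{2}$ (absorbed by $\tfrac{a_k\mu'}{2}\|y^k-x^{*}\|_{2}^{2}$); the strong-monotonicity bound $\|\nabla f\|_{2}\geqslant\mu\|y^k-x^{*}\|_{2}$ is used to trade $\|\nabla f\|\|y^k-x^{*}\|$ into either slot as needed, and this is where the tradeoff $\alpha^{2}\lesssim\mu\theta\sim\mu/L$ arises. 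The surviving budgets after the optimized Young splits are exactly encoded by $m=1-2\alpha$ (the remaining fraction of the $\|\widetilde{\nabla} f\|_{2}^{2}$-slot) and $s=1+2\alpha+2\alpha^{2}$ (the inflated coefficient of the $\|y^k-x^{*}\|_{2}^{2}$-slot); setting both residual slots to zero yields the quadratic $ma_k^{2}+(s-m)a_k-q=0$ with $q=\mu'/\widehat L$ of \eqref{a_kAlg1}, whose largest root is exactly the chosen $a_k$. Under $\alpha<\tfrac{\sqrt{2}-1}{18\sqrt{2}}\sqrt{\mu/L}$ one readily verifies $m\geqslant 1/2$, $s\leqslant 3/2$, $\widehat L\leqslant 2L$, and hence $q\geqslant\mu/(4L)$; bounding the largest root via $\sqrt{(s-m)^{2}+4mq}\geqslant 2\sqrt{mq}$ then yields $a_k\geqslant\tfrac{1}{10\sqrt{2}}\sqrt{\mu/L}$, and telescoping completes the proof.
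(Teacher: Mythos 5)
Your plan is the potential-function counterpart of the paper's estimating-sequence argument (Lemmas~\ref{lower bound}, \ref{c > f lemma}, \ref{upper bound for lemma}, \ref{a_k correctness}), and the bookkeeping up to your displayed residual, as well as the final numeric lower bound on $a_k$, are sound. However, there is a genuine gap precisely at the step you call the hard part. In deriving the residual you have already thrown away the two negative quadratics the argument needs: you used plain convexity on the $x^k$-side of the Nesterov estimate (so no $-(1-a_k)\tfrac{\mu}{2}\|y^k-x^k\|_2^2$ survives) and you used convexity of $\|\cdot\|_2^2$ on $(1-a_k)u^k+a_ky^k$ (so the variance term $-\tfrac{\mu'}{2}a_k(1-a_k)\|u^k-y^k\|_2^2$ is gone). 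After that, the error term $(1-a_k)\langle\nabla f(y^k)-\widetilde{\nabla} f(y^k),\,y^k-x^k\rangle$ cannot be absorbed by the two slots you list, because $\|y^k-x^k\|_2$ is not controlled by $\|\widetilde{\nabla} f(y^k)\|_2$ or $\|y^k-x^*\|_2$: take $y^k$ close to $x^*$ (both slots of order $\varepsilon^2$) while $x^k$ and $u^k$ are far away, with the noise aligned with $y^k-x^k$; the positive term is then of order $\alpha\varepsilon\|y^k-x^k\|_2$ and dominates, so no choice of Young parameters makes the stated right-hand side nonpositive.

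The fix is exactly the ingredient the paper uses. Either retain the strong-convexity quadratic when bounding $f(y^k)$ through $x^k$, or keep the exact identity $\|(1-a)v+aw\|_2^2=(1-a)\|v\|_2^2+a\|w\|_2^2-a(1-a)\|v-w\|_2^2$ instead of convexity, so that $-\tfrac{\mu'}{2}a_k(1-a_k)\|u^k-y^k\|_2^2$ remains available. Since $y^k-x^k=a_k(u^k-y^k)$, Young's inequality then splits $(1-a_k)\langle\nabla f-\widetilde{\nabla} f,\,y^k-x^k\rangle=a_k(1-a_k)\langle\nabla f-\widetilde{\nabla} f,\,u^k-y^k\rangle$ into a multiple of $\|\nabla f(y^k)\|_2^2$ (which joins the gradient slot and feeds the $\alpha^2$ part of $s$ and $m$) plus a $\|u^k-y^k\|_2^2$ piece absorbed by that retained quadratic. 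This is precisely the paper's step in Lemma~\ref{c > f lemma}, where $\langle\widetilde{\nabla} f(y^k)-\nabla f(y^k),u^k-y^k\rangle$ is Young-bounded and absorbed by the $\mu\|y^k-u^k\|_2^2$ term of the square expansion, and it is also the structural reason the algorithm is invoked with strong-convexity parameter $\mu/2$: half of the strong convexity is spent on exactly this absorption. With that term restored, your two-slot accounting, the identification of $m$ and $s$, the quadratic equation for $a_k$, and the bound $a_k\geqslant\tfrac{1}{10\sqrt{2}}\sqrt{\mu/L}$ go through, and the argument becomes a faithful rewriting of the paper's proof.
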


\begin{theorem} \label{nesterov acc corrected convergence tau}
Let Assumption~\ref{as:Lsmooth+muConv} hold.
% $L$-smooth~\eqref{smoothness_cond} and $\mu$ strongly convex~\eqref{as:str_cvx}, $\widetilde{\nabla}f$ satisfies~\eqref{eq:relative_error}, 
If Assumption~\ref{as:noise} holds with $\alpha = \frac{1}{3}\left(\frac{\mu}{L}\right)^{\frac{1}{2} - \tau}$, where $0 \leqslant \tau \leqslant \frac{1}{2}$, then Algorithm~\ref{alg:nesterov corrected} with parameters ($L, \frac{\mu}{2}, x^0, \alpha)$ generates $x^N$, s.t. 
\begin{equation*}
    f(x^N) - f^* \leqslant L R^2 \left( 1 - \frac{1}{10\sqrt{2}} \left(\frac{\mu}{L}\right)^{\frac{1}{2} + \tau} \right)^N.
    \end{equation*}
\end{theorem}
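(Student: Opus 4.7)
The plan is to follow the same Lyapunov / estimating-sequences analysis underlying Theorem~\ref{nesterov acc corrected convergence}, and then to track carefully how the convergence rate degrades when the permissible relative error $\alpha$ grows beyond the $\sqrt{\mu/L}$ regime. Throughout, I would use the Corollary following Assumption~\ref{as:noise} to pass between $\widetilde{\nabla} f$ and $\nabla f$ via the constants $\gamma = \sqrt{1-\alpha^2}$, $\nu = 1-\alpha$, $\rho = 1+\alpha$.

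First I would establish a one-step descent lemma for the gradient step $x^{k+1} = y^k - h\,\widetilde{\nabla} f(y^k)$. Combining $L$-smoothness~\eqref{smoothness_cond} with $\langle\widetilde{\nabla} f(y^k),\nabla f(y^k)\rangle \ge \gamma\|\widetilde{\nabla} f(y^k)\|_2\|\nabla f(y^k)\|_2$ and $\|\widetilde{\nabla} f(y^k)\|_2 \le \rho\|\nabla f(y^k)\|_2$, and plugging in $h = \bigl(\tfrac{1-\alpha}{1+\alpha}\bigr)^{3/2}\tfrac{1}{L}$, I expect the coefficient of $\|\widetilde{\nabla} f(y^k)\|_2^2$ to collapse to a constant multiple of $1/\widehat L$, where $\widehat L = \tfrac{1+\alpha}{(1-\alpha)^3} L$. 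Thus the step effectively behaves like an ordinary gradient step for a function of smoothness $\widehat L$. Next I would relate $f(y^k) - f^*$ to the squared distances $\|u^k - x^*\|_2^2$ and $\|u^{k+1} - x^*\|_2^2$ by expanding the momentum update and using strong convexity together with the corollary to convert $\widetilde{\nabla} f(y^k)$-terms into $\nabla f(y^k)$-terms; this is where the constants $s = 1 + 2\alpha + 2\alpha^2$ and $m = 1 - 2\alpha$ enter.

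Setting $\Phi_k := f(x^k) - f^* + \tfrac{\mu}{2}\|u^k - x^*\|_2^2$, combining the two estimates, and imposing the defining quadratic $m a_k^2 + (s-m) a_k - q = 0$ with $q = \mu/\widehat L$ should yield the one-step contraction $\Phi_{k+1} \le (1 - a_k)\Phi_k$; the quadratic is precisely calibrated so that all cross terms cancel. This mirrors the argument for Theorem~\ref{nesterov acc corrected convergence}, the crucial difference being that the small-$\alpha$ hypothesis is not invoked at this point.

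The main obstacle, and the step where the new theorem departs from the previous one, is the sharp lower bound on the root $a_k$ defined by~\eqref{a_kAlg1}. With $\alpha = \tfrac{1}{3}(\mu/L)^{1/2 - \tau}$ one has $s - m = 4\alpha + 2\alpha^2 \asymp \alpha$ while $4mq$ is of order $\mu/L$, so the ratio $(s-m)^2/(4mq) \asymp (\mu/L)^{-2\tau}$ dominates whenever $\tau > 0$. I would therefore carry out a case analysis: for $\tau = 0$ the two terms in the discriminant balance and one recovers $a_k \asymp \sqrt{\mu/L}$, matching Theorem~\ref{nesterov acc corrected convergence}; for $\tau > 0$, a first-order expansion of the square root in~\eqref{a_kAlg1} gives
\begin{equation*}
a_k \;\approx\; \frac{q}{s-m} \;\asymp\; \frac{\mu/L}{\alpha} \;=\; (\mu/L)^{1/2 + \tau}.
\end{equation*}
Making this rigorous with the explicit constant $\tfrac{1}{10\sqrt{2}}$ is a matter of retaining the $(1-\alpha)^{-3}$ factor in $\widehat L$ and noting that the algorithm is instantiated with $\mu/2$ in place of $\mu$; both adjustments only shift universal constants. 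The contraction then telescopes to $\Phi_N \le (1 - a_k)^N \Phi_0$, and the bound $\Phi_0 \le L R^2$, obtained from $L$-smoothness together with the initialization $u^0 = x^0$, delivers the stated rate.
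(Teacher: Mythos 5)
Your plan matches the paper's proof in all essentials: the inexact-gradient descent lemma giving effective smoothness $\widehat L=\frac{1+\alpha}{(1-\alpha)^3}L$, the strong-convexity lower model in which $s=1+2\alpha+2\alpha^2$ and $m=1-2\alpha$ enter, the quadratic $ma_k^2+(s-m)a_k-q=0$ calibrated so the recursion contracts by $(1-a_k)$, and --- the only genuinely new ingredient of this theorem relative to Theorem~\ref{nesterov acc corrected convergence} --- the lower bound $a_k\gtrsim q/(s-m)\asymp(\mu/L)^{1/2+\tau}$ obtained from a first-order expansion of the square root in \eqref{a_kAlg1}, which is exactly the paper's Lemma~\ref{a_k correctness}. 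Your potential $\Phi_k=f(x^k)-f^*+\tfrac{\mu}{2}\|u^k-x^*\|_2^2$ is simply the estimating sequence $\psi^k$ evaluated at $x^*$ combined with $c^k\geqslant f(x^k)$, so your argument is the paper's (Lemmas~\ref{c > f lemma}, \ref{upper bound for lemma}, \ref{conv lemma}) in potential-function form, down to the final bound $\Phi_0\leqslant LR^2$ and the bookkeeping for the $\mu/2$ instantiation.
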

The proofs can be found in Appendix~\ref{appendix accelerated}. The next remark explains the result of Theorem~\ref{nesterov acc corrected convergence tau}.
\begin{remark} \label{nesterov conv tau remark}
Substituting $\tau = 0$ and $\tau = \frac{1}{2}$ we obtain
% \begin{enumerate}

1. If $\tau = 0$ and $\alpha = \frac{1}{3} \sqrt{\frac{\mu}{L}}$, then
% \begin{equation*}
$
    f(x^N) - f^* \leqslant L R^2 \left( 1 - \frac{1}{10\sqrt{2}} \sqrt{\frac{\mu}{L}} \right)^N
$.
% \end{equation*}

This is the optimal convergence rate for the class of functions satisfying Assumption~\ref{as:Lsmooth+muConv} meaning that for the error $\alpha$ up to $ \frac{1}{3} \sqrt{\frac{\mu}{L}}$ our algorithm has optimal convergence rate, and, hence optimal iteration complexity.

2. If $\tau = \frac{1}{2}$ and $\alpha = \frac{1}{3}$, then
% \begin{equation*}
$
    f(x^N) - f^* \leqslant L R^2 \left( 1 - \frac{1}{10\sqrt{2}} \frac{\mu}{L} \right)^N
$.
% \end{equation*}

This convergence rate is the same as for gradient descent meaning that for values of the error $\alpha$ up to $ \frac{1}{3}$ our algorithm still has linear convergence with the rate no worse than for the gradient descent. 
% \end{enumerate}
For the intermediate values of $\tau$ our algorithm interpolates between the accelerated gradient method and gradient method, maintaining linear convergence in the whole range of values of $\tau \in [0,1/2]$. We can also observe a tradeoff between the convergence rate and the error in the gradient: the larger is the error the worse is the guaranteed linear convergence rate.
% This Remark shows that Algorithm~\ref{alg:nesterov corrected} can maintain convergence with $\alpha = \left(\frac{\mu}{L} \right)^{\frac{1}{2} + \tau}$ with speed rate reduction. In particular, we can guarantee linear convergence with the $1 - \frac{\mu}{L}$ rate for $\alpha \leqslant \frac{1}{3}$.
\end{remark}
\begin{remark} Our results can be applied to some extent in gradient-free optimization. As shown in~\cite{berahas2022theoretical}, given an access to inexact evaluations $\widetilde{f}(x)$ of $f(x)$ that satisfy $|\widetilde{f}(x) - f(x)| \leqslant \epsilon_f$, the following gradient estimator 
\begin{equation} \label{FFD est}
    \widetilde{\nabla}f(x) = \displaystyle\sum_{j = 1}^{d} \frac{\widetilde{f}(x + \sigma u^j) - \widetilde{f}(x)}{\sigma} u^j, 
\end{equation}
where $u^j$ are unit coordinate vectors, satisfies
\begin{equation*}
    \|\widetilde{\nabla}{f}(x) - \nabla f(x) \|_2 \leqslant \frac{\sqrt{d} L \sigma}{2} + \frac{2\sqrt{d} \epsilon_f}{\sigma}.
\end{equation*}
Thus, setting $\sigma = 2 \sqrt{\frac{\epsilon_f}{L}}$ on set $\left\lbrace x \; \bigg | \; \|\nabla{f}(x) \|_2 \geqslant \frac{2\sqrt{dL\epsilon_f}}{\alpha} \right\rbrace$ we can guarantee condition $\|\widetilde{\nabla}{f}(x) - \nabla f(x) \|_2 \leqslant \alpha \| \nabla{f}(x) \|_2$ and our results can be applied for the first stage of the algorithm's work while $x$ belongs to the above set and the value of the gradient is not too small.

If we can additionally guarantee a relative accuracy on the value of $f$
\begin{equation*}
    |\widetilde{f}(x) - f(x)| \leqslant \alpha_0 \sqrt{f(x) - f^*}, 
\end{equation*}
Using Polyak-{\L}ojasiewicz (P{\L}) condition we obtain $|\widetilde{f}(x) - f(x)| \leqslant \frac{\alpha_0}{\sqrt{2\mu}} \|\nabla f(x) \|_2$. Then for the gradient estimator~\eqref{FFD est} we derive
\begin{equation*}
     \|\widetilde{\nabla}{f}(x) - \nabla f(x) \|_2 \leqslant \sqrt{\frac{2d}{\mu}}\frac{\alpha_0}{\sigma} \| \nabla{f}(x) \|_2 + \sqrt{d}L \left(\sigma + \frac{\alpha_0}{\sqrt{2\mu}} \right).
\end{equation*}
This leads to a combined relative and absolute inexactness model that is interesting to study in future works.

\begin{remark}
    A variant of relative inexactness in the first-order information was considered in~\cite{vaswani2019fast} in the context of stochastic optimization. In that paper, the inexact gradient $\widetilde{\nabla}{f}(x)$ is assumed to satisfy
    \begin{equation*}
    \begin{gathered}
        \mathbb{E} \left[ \widetilde{\nabla}{f}(x) \; | \; x \right] = \nabla{f}(x), \\
        \mathbb{E} \left[ \| \widetilde{\nabla}{f}(x) \|_2^2 \; | \; x \right] \leqslant \rho \| \nabla{f}(x) \|_2^2 + \sigma^2.
    \end{gathered}
\end{equation*}
In this case, the authors guarantee the following convergence rate
\begin{equation*}
    \mathbb{E} f(x^{N + 1}) - f^* \leqslant \left(1 - \frac{1}{\rho} \sqrt{\frac{\mu}{L}} \right)^N \left(f(x^0) - f^* + \frac{\mu}{2} \|x^0 - x^* \|_2^2 \right) + \frac{\sigma^2}{\rho \sqrt{\mu}{L}}.
\end{equation*}
Thus, if $\sigma=0$ it is possible to obtain linear convergence in the inexact case. At the same time, their assumption is very different from ours in two ways. Firstly, they assume that the gradient approximation is unbiased, whereas in our case, a systematic bias is allowed. Secondly, their relative error is for the second moment of $\widetilde{\nabla}{f}(x)$  rather than any bound for $\|\widetilde{\nabla}{f}(x) - \nabla f(x)\|$ as in our case.
    
%     Another motivation for studying the results of relative inexactness is stochastic optimization. As it was shown in~\cite{vaswani2019fast}, the term corresponding to relative inexactness does not worsen the convergence. The following model is usually considered:
% \begin{equation*}
%     \begin{gathered}
%         \mathbb{E} \left[ \widetilde{\nabla}{f}(x) \; | \; x \right] = \nabla{f}(x), \\
%         \mathbb{E} \left[ \| \widetilde{\nabla}{f}(x) \|_2^2 \; | \; x \right] \leqslant \rho \| \nabla{f}(x) \|_2^2 + \sigma^2.
%     \end{gathered}
% \end{equation*}
% In this case, the authors guarantee convergence:
% \begin{equation*}
%     \mathbb{E} f(x^{N + 1}) - f^* \leqslant \left(1 - \frac{1}{\rho} \sqrt{\frac{\mu}{L}} \right)^N \left(f(x^0) - f^* + \frac{\mu}{2} \|x^0 - x^* \|_2^2 \right) + \frac{\sigma^2}{\rho \sqrt{\mu}{L}}.
% \end{equation*}

% This assumption is different than ours, especially unbiased, and allows to keep linear convergence if $\sigma=0$.
\end{remark}

\end{remark}

\section{Saddle Point Problems} \label{section min max}
%\section{Robustness of inexact gradient descent-ascent and extragradient method for smooth saddle point problems} 

Unlike the case of minimization problems considered in the previous section, convergence of algorithms with inexact oracle and relative errors for saddle point problems is unexplored, to the best of our knowledge. In this section, we obtain the first several results on how gradient inexactness affects the convergence of a basic first-order method called Sim-GDA and designed for smooth saddle point problems.
%and whether or not this method enjoys similar level of robustness as we observe for convex minimization problems. 
Particularly, we derive upper thresholds for relative inexactness level $\alpha$ so that the values of $\alpha$ smaller than this threshold do not break the linear convergence of  the algorithm. We also show that this threshold is exact, i.e. there is no worst-case linear convergence if inexactness $\alpha$ exceeds the threshold. Below, in the section on numerics, we perform numerical worst-case analysis of different algorithms for saddle point problems to find the actual thresholds for the level of inexactness $\alpha$  for composite saddle point problems with bilinear coupling. 
%To that end, we employ a PEP-like technique of automatic search for Lyapunov functions \cite{pmlr-v80-taylor18a}. \par

% \subsection{Preliminaries}
Let us consider problem \eqref{spp}. 
We use the following additional notation that combines partial gradients of $f$ into one operator: $g_z(z) = [g_x(z)^{\top}, -g_y(z)^{\top}]^{\top}$ where $g_x(z) = \nabla_x f(z)$, $g_y(z) = \nabla_y f(z)$, and $z = [x^{\top}, y^{\top}]^{\top}$. We start with the following assumptions.

%convex-concave with strongly-monotone, L-Lipshits gradients
\begin{assumption} \label{as:conv-conc} Function $f\left(x,y\right) \colon \sX \times \sY \to \R$ is convex-concave and has $\mu$-strongly-monotone, $L$-Lipschitz gradient (for shortness, we write  $f\in S_{\mu, L}$), i.e., $\forall z^0, z^1 \in \R^{d_x+d_y}$
\begin{align*}
    \langle g_z(z^1) - g_z(z^0), z^1-z^0 \rangle \geqslant 
        \mu \|z^1 - z^0\|_2^2,
        % \\
        \quad
    \|g_z(z^1) - g_z(z^0)\|_2 \leqslant 
        L \|z^1 - z^0\|_2.
\end{align*}
\end{assumption}
\begin{assumption}[Relative noise] \label{as:conv-conc-inexact}
We assume access to inexact partial gradients  $\widetilde{\nabla}_x f(x,y)$ and $ \widetilde{\nabla}_y f(x,y)$ of $f$ with relative error, meaning that there exists $\alpha \in [0, 1)$ s.t. $\forall x \in \sX, y\in \sY$
\begin{equation*}
    \|\widetilde{\nabla}_x f(x,y) - \nabla_x f(x,y)\|_2^2 + \|\widetilde{\nabla}_y f(x,y) - \nabla_y f(x,y)\|_2^2 
    % \\ 
    \leqslant \alpha^2 \left( \|\nabla_x f(x,y)\|_2^2 + \|\nabla_y f(x,y)\|_2^2 \right).
\end{equation*}
\end{assumption}
The latter assumption is a counterpart of Assumption \ref{as:noise} for saddle point problems.
%and quantifies the relative error in partial gradients of $f$.

% \subsection{Gradient Descent-Ascent}
We consider two versions of Gradient Descent-Ascent (GDA) algorithm, namely Sim-GDA~\cref{alg:sim} and Alt-GDA~\cref{alg:alt}, that use inexact partial gradients of the objective $f$ with relative  inexactness and apply them to solve Problem~\eqref{spp}. Our theoretical results are obtained for Sim-GDA and below, in Section \ref{sec:numerical_results}, we present experiments with both Sim-GDA and Alt-GDA.
% \begin{align*}
% & x^{k+1} = x^{k} - \eta_x \widetilde{\nabla}_x f(x^k,y^k),  \\
% & y^{k+1} = y^{k} + \eta_y \widetilde{\nabla}_y f(x^{k},y^k), 
% \end{align*}

\begin{minipage}[htp]{0.45\textwidth} 
    \begin{algorithm}[H]
    \caption{ Inexact Sim-GDA.}
    	\label{alg:sim}
    \begin{algorithmic}[1]
    
    \STATE {\bfseries Input:} $x^0 = x_{\operatorname{start}}$, $y^0 = y_{\operatorname{start}}$, $\eta_x, \eta_y.$
    %\STATE 
    %\noindent {\bf Input:} Starting point $x_{\operatorname{start}}$, number of steps $N$.
    \FOR {$k = 1, \dots, N$}
            \STATE $x^{k+1} = x^{k} - \eta_x \widetilde{\nabla}_x f(x^k,y^k),$
            \STATE $y^{k+1} = y^{k} + \eta_y \widetilde{\nabla}_y f(x^{k},y^k).$
    \ENDFOR
    \STATE \noindent {\bf Output:} $z^N.$
    \end{algorithmic}
    \end{algorithm}
\end{minipage} \hfill
% \begin{align*}
% & x^{k+1} = x^{k} - \eta_x \widetilde{\nabla}_x f(x^k,y^k),  \\
% & y^{k+1} = y^{k} + \eta_y \widetilde{\nabla}_y f(x^{k+1},y^k), 
% \end{align*}
\begin{minipage}[htp]{0.45\textwidth} 
    \begin{algorithm}[H]
    \caption{ Inexact Alt-GDA.}
    	\label{alg:alt}
    \begin{algorithmic}[1]
    \STATE {\bfseries Input:} $x^0 = x_{\operatorname{start}}$, $y^0 = y_{\operatorname{start}}$, $\eta_x, \eta_y.$
    % \STATE {\bfseries Input:} $(x^0 = x_{\operatorname{start}}, y^0 = y_{\operatorname{start}}, \eta_x, \eta_y).$
    %\STATE 
    %\noindent {\bf Input:} Starting point $x_{\operatorname{start}}$, number of steps $N$.
    \FOR {$k = 1, \dots, N$}
            \STATE $x^{k+1} = x^{k} - \eta_x \widetilde{\nabla}_x f(x^k,y^k),$
            \STATE $y^{k+1} = y^{k} + \eta_y \widetilde{\nabla}_y f(x^{k+1},y^k).$
    \ENDFOR
    \STATE \noindent {\bf Output:} $z^N.$
    \end{algorithmic}
    \end{algorithm}
\end{minipage}
\\~\\

For functions $f \in S_{\mu, L}$, we can guarantee the following iteration complexity of Inexact Sim-GDA.
% Consider convex-concave functions with $\mu$-strongly-monotone, $L$-Lipschitz gradients, i.e., functions that belong to $S_{\mu, L}$. We can guarantee the following iteration complexity for Inexact Sim-GDA.

\begin{theorem}
\label{corollary:upper_bound_stongly_monotone_Lipshitz}
Let Assumption \ref{as:conv-conc} hold. If Assumption \ref{as:conv-conc-inexact} holds with $\alpha<\frac{\mu}{L}$, then Algorithm \ref{alg:sim} with parameters $(z^0, \eta,\eta)$, where $\eta=\frac{\mu -\alpha L}{(1+\alpha)^2L^2}$ generates $z^N$ s.t. $\|z^N-z^*\|_2\leqslant \varepsilon$ in 
\begin{align*}
     N=\mathcal{O} \rbr*{\frac{L^2}{\mu^2}\frac{1}{(1 - \alpha L/\mu)^2}\ln\frac{\|z^0-z^*\|_2}{\varepsilon}} \;\; \text{iterations.}
\end{align*} 
% Let Inexact Sim-GDA~\cref{alg:sim} be applied to Problem~\eqref{spp} with $f \in S_{\mu, L}$ (\cref{as:conv-conc}). Then, the sequence $z^N$ generated by this algorithm linearly converges to the saddle point $z^*$, i.e., the algorithm guarantees $\|z^N-z^*\|\leq \varepsilon$ in 
% \begin{align*}
%      N=\mathcal{O} \rbr*{\frac{L^2}{\mu^2}\frac{1}{(1 - \alpha L/\mu)^2}\ln\frac{\|z^0-z^*\|}{\varepsilon}} \;\; \text{iterations.}
% \end{align*}
% For $f \in S_{\mu, L}$ (\cref{as:conv-conc}), Inexact Sim-GDA~\cref{alg:sim} linearly converges with iteration complexity
% \begin{align*}
%      \mathcal{\widetilde O} \rbr*{\frac{L^2}{\mu^2}\frac{1}{(1 - \alpha L/\mu)^2}},
% \end{align*}
% where we omit logarithmic factors.
% % \na{use tilde o above}
\end{theorem}

We defer the proof to Appendix~\ref{appendix proofs min max}. 
%From the iteration complexity estimate we see that Inexact Sim-GDA converges linearly whenever $\alpha < \mu/L$.

Our next result shows that the bound $\alpha < \frac{\mu}{L}$ in the previous theorem is tight, i.e., we prove that Inexact Sim-GDA does not converge linearly if $\alpha \geqslant\frac{\mu}{L}$ which tightly matches the guarantee in~\cref{corollary:upper_bound_stongly_monotone_Lipshitz}.

\begin{theorem}\label{teo:sim_gda_lower_bound_alpha}
There exists a function $f$ satisfying \cref{as:conv-conc} with $d_x = d_y = 1$ such that Inexact Sim-GDA with any constant step size $\eta$ loses linear convergence when $\alpha \geqslant \frac{\mu}{L}$.
\end{theorem}
We defer the proof to Appendix~\ref{appendix proofs min max}. 

We also prove the similar statements for Inexact Alt-GDA, though the guarantees for $\alpha$ only match up to a constant factor. We guarantee the following complexity of Inexact Alt-GDA.
\begin{theorem}
\label{corollary:upper_bound_stongly_monotone_Lipshitz_ALT}
Let Assumption \ref{as:conv-conc} hold. If Assumption \ref{as:conv-conc-inexact} holds with $\alpha<\frac{\mu}{\sqrt{2}L}$, then Algorithm \ref{alg:alt} with parameters $(z^0, \eta,\eta)$, where $\eta= \mathcal{O} \rbr*{\frac{\mu}{L^2}}$ generates $z^N$ s.t. $\|z^N-z^*\|_2\leq \varepsilon$ in 
\begin{align*}
     N=\mathcal{O} \rbr*{\frac{L^2}{\mu^2}\frac{1}{(1 - \alpha \sqrt{2}L/\mu)^2}\ln\frac{\|z^0-z^*\|_2}{\varepsilon}} \;\; \text{iterations.}
\end{align*} 
\end{theorem}
And show that Inexact Alt-GDA loses linear convergence if $\alpha \geqslant\frac{\mu}{L}$.
\begin{theorem}\label{teo:alt_gda_lower_bound_alpha}
There exists a function $f$ satisfying \cref{as:conv-conc} with $d_x = d_y = 1$ such that Inexact Alt-GDA with any constant step size $\eta$ loses linear convergence when $\alpha \geqslant \frac{\mu}{L}$.
\end{theorem}

We next move to a refined version of Assumption \ref{as:conv-conc}.
Let constants  $\left(\mu_x,\mu_y,L_x,L_y,L_{xy}\right)$ satisfy $0 \leqslant \mu_x \leqslant L_x$, $0 \leqslant \mu_y \leqslant L_y$, $L_{xy} \geqslant 0$.
\begin{assumption}\label{as:long}
% Function $f\left(x,y\right) \colon \sX \times \sY \to \R$ is differentiable, $\left(\mu_x,\mu_y\right)$-strong-convex-strong-concave (SCSC), and has $\left(L_x,L_y,L_{xy}\right)$-Lipschitz gradients (for shortness, we write $f\in S_{\mu_x\mu_yL_xL_yL_{xy}}$), i.e., $\forall x,x^0,x^1 \in \sX, y,y^0,y^1 \in \sY$
% \begin{align*}
%     &f\left(\cdot,y\right): \mu_x-\text{strongly convex}, \quad  \quad  \quad  \quad  \quad  \quad \, \, \; \; \,
%     f\left(x,\cdot\right): \mu_y-\text{strongly concave},
%     \\
%     &\|\nabla_x f(x^1,y) - \nabla_x f(x^0,y)\|_2 \leqslant L_x \|x^1-x^0\|_2,
%     \|\nabla_x f(x,y^1) - \nabla_x f(x,y^0)\|_2 \leqslant L_{xy} \|y^1 - y^0\|_2, 
%     \\
%     &\|\nabla_y f(x,y^1) - \nabla_y f(x,y^0)\|_2 \leqslant L_y \|y^1 - y^0\|_2 ,
%     \|\nabla_y f(x^1,y) - \nabla_y f(x^0,y)\|_2 \leqslant L_{xy} \|x^1 - x^0\|_2.
% \end{align*}
Function $f(x,y) \colon \sX \times \sY \to \R$ is differentiable, $(\mu_x,\mu_y)$-strong-convex-strong-concave (SCSC), and has $(L_x,L_y,L_{xy})$-Lipschitz gradients (for shortness, we write $f \in S_{\mu_x\mu_yL_xL_yL_{xy}}$), i.e., $\forall x,\widehat{x},\widetilde{x} \in \sX, y,\widehat{y},\widetilde{y} \in \sY$
\begin{align*}
    &f(\cdot,y): \mu_x\text{-strongly convex}, \quad  \quad  \quad  \quad  \quad  \quad \, \, \; \; \,  
    f(x,\cdot): \mu_y\text{-strongly concave}, 
    \\
    &\|\nabla_x f(\widetilde{x},y) - \nabla_x f(\widehat{x},y)\|_2 \leqslant L_x \|\widetilde{x}-\widehat{x}\|_2, \quad
    \|\nabla_x f(x,\widetilde{y}) - \nabla_x f(x,\widehat{y})\|_2 \leqslant L_{xy} \|\widetilde{y} - \widehat{y}\|_2,
    \\
    &\|\nabla_y f(x,\widetilde{y}) - \nabla_y f(x,\widehat{y})\|_2 \leqslant L_y \|\widetilde{y}-\widehat{y}\|_2, \quad \,
    \|\nabla_y f(\widetilde{x},y) - \nabla_y f(\widehat{x},y)\|_2 \leqslant L_{xy} \|\widetilde{x}-\widehat{x}\|_2.
\end{align*}

% For given constants $\left(\mu_x,\mu_y,L_x,L_y,L_{xy}\right)$ that satisfy $0 \leqslant \mu_x \leqslant L_x$, $0 \leqslant \mu_y \leqslant L_y$, $L_{xy} \geqslant 0$, we say that a differentiable function $f\left(x,y\right) \colon \sX \times \sY \to \R$ is $\left(\mu_x,\mu_y\right)$-strong-convex-strong-concave (SCSC) and has $\left(L_x,L_y,L_{xy}\right)$-Lipschitz gradients (i.e., belongs to the class of functions denoted as $S_{\mu_x\mu_yL_xL_yL_{xy}}$) if $\forall x,x^0,x^1 \in \sX, y,y^0,y^1 \in \sY$
% \begin{align*}
%     &f\left(\cdot,y\right): \mu_x-\text{strongly convex}, \quad  \quad  \quad  \quad  \quad  \quad \, \, \; \; \,
%     f\left(x,\cdot\right): \mu_y-\text{strongly concave},
%     \\
%     &\|\nabla_x f(x^1,y) - \nabla_x f(x^0,y)\|_2 \leqslant L_x \|x^1-x^0\|_2,
%     \|\nabla_x f(x,y^1) - \nabla_x f(x,y^0)\|_2 \leqslant L_{xy} \|y^1 - y^0\|_2, 
%     \\
%     &\|\nabla_y f(x,y^1) - \nabla_y f(x,y^0)\|_2 \leqslant L_y \|y^1 - y^0\|_2 ,
%     \|\nabla_y f(x^1,y) - \nabla_y f(x^0,y)\|_2 \leqslant L_{xy} \|x^1 - x^0\|_2.
% \end{align*}
\end{assumption}

\cref{corollary:upper_bound_stongly_monotone_Lipshitz} shows that Inexact Sim-GDA exhibits linear convergence for functions $f \in S_{\mu, L}$ if $\alpha < \frac{\mu}{L}$. Since $S_{\mu \mu LLL} \subset S_{\mu, 2L}$, as a corollary of \cref{corollary:upper_bound_stongly_monotone_Lipshitz}, we also obtain convergence guarantee for $S_{\mu_x\mu_yL_xL_yL_{xy}}$ with $\alpha < \frac{\mu}{2L}$, where $L = \max(L_x, L_y, L_{xy})$ and $\mu = \min(\mu_x, \mu_y)$. This bound may be pessimistic since it does not use the splitting of constants for variables $x$ and $y$. 
For exact Sim-GDA such splitting of constants was used in \cite{lee2024fundamentalbenefitalternatingupdates}, who for $f \in S_{\mu_x\mu_yL_xL_yL_{xy}}$  obtained an iteration complexity estimate (omitting logarithmic factors) 
\begin{align*}
    \mathcal{O} \rbr*{\frac{L_x}{\mu_x} + \frac{L_y}{\mu_y} + \frac{L_{xy}^2}{\mu_x\mu_y}}
\end{align*}
with separated condition numbers and matching the lower bound.
In our next result, we derive a condition for the relative error $\alpha$ that is sufficient for the proof scheme in \cite{lee2024fundamentalbenefitalternatingupdates} to be applicable in the inexact setting. To that end, we introduced relative inexactness to the said scheme and found $\alpha$ that does not break the contraction inequality (see Lemma~\ref{lem:contraction_scsc_smooth}). It is, however, possible that the separation of condition numbers in iteration complexity would require stricter thresholds for $\alpha$.

\begin{theorem} \label{corollary:upper_bound_scsc_smooth} 
Let \cref{as:long} hold with $\mu_x=\mu_y=\mu$ and $L_x=L_y=L$. If Assumption \ref{as:conv-conc-inexact} holds with $\alpha$ satisfying 
\begin{eqnarray*}
    \text{case} \quad  L_{xy}^2 \leqslant \frac{\mu L}{2}:&& \quad  \alpha \left((1+\alpha)^2 + \alpha\right) < \frac{\mu}{2L} - \frac{L_{xy}^2}{2L^2}, \\
    \text{case} \quad  L_{xy}^2 > \frac{\mu L}{2}:&& \quad     \alpha \left((1+\alpha)^2 + \alpha\right) < \frac{\mu^2}{8L_{xy}^2},
\end{eqnarray*}
then Algorithm \ref{alg:sim} with an appropriate choice of parameters has linear convergence.
% Consider $f \in S_{\mu\mu L LL_{xy}}$ (\cref{as:long}). Inexact Sim-GDA retains linear convergence if 
% \begin{eqnarray*}
%     \text{case} \quad  L_{xy}^2 \leqslant \frac{\mu L}{2}:&& \quad  \alpha \left((1+\alpha)^2 + \alpha\right) < \frac{\mu}{2L} - \frac{L_{xy}^2}{2L^2}, \\
%     \text{case} \quad  L_{xy}^2 > \frac{\mu L}{2}:&& \quad     \alpha \left((1+\alpha)^2 + \alpha\right) < \frac{\mu^2}{8L_{xy}^2} .
% \end{eqnarray*}
\end{theorem}
We defer the details on the parameters choice and the proof to Appendix~\ref{appendix proofs min max}.

\section{Nonlinear equations}\label{sec:ne}
Saddle point problem \eqref{spp} and minimization problem \eqref{optim} can both be equivalently reformulated as a special case of a more general nonlinear equation (NE) problem $g(z)=0$, by writing optimality conditions. Thus, NEs \eqref{ne} represent a more general class of problems of independent interest with many applications, see, e.g., the recent reviews \cite{tran-dinh2024extragradient,tran-dinh2025accelerated} and the references therein. This section is devoted to solution of such NEs directly. We start with the following assumptions.
%For given constants $\left(\mu, L\right)$ that satisfy $0 \leqslant \mu \leqslant L$, we say that an
\begin{assumption}\label{as:lipqsm}
Operator $g\left(z\right) \colon \sZ  \to \R^d$ is $\mu$-quasi-strongly-monotone (QSM) and $L$-Lipschitz (for shortness, we write $g \in S_{\mu, L}$), i.e., $\forall x, y \in \sZ$ and for any solution $z^*$ of NE 
    \begin{equation*}
    % \label{eq:lipschitzness}
		\|g(x) - g(y)\|_2 \le L\|x - y\|_2, \quad
	% \end{equation}
 %    \begin{equation}
 %    \label{eq:str_monotonicity}
        \langle g(x), x - z^*\rangle \ge \mu\|x - z^*\|_2^2.
    \end{equation*}
\end{assumption}
\begin{assumption}[Relative noise] \label{as:ne-inexact}
We assume access to inexact values $\widetilde{g}(z)$  of the operator $g(z)$ with relative error, meaning that there exists $\alpha \in [0, 1)$ s.t. $\forall z\in \sZ$
\begin{eqnarray}\label{eq:relative_error_intro_op}
    \|\widetilde{g}(z) - g(z)\|_2 \leqslant \alpha \|g(z)\|_2.
\end{eqnarray}
\end{assumption}

% We assume that inexact values $\widetilde{g}(z)$  of the operator $g(z)$ (with relative accuracy) are available for an algorithm and these values satisfy for some $\alpha \in [0,1)$ and all $z$ the inequality
% \begin{eqnarray}\label{eq:relative_error_intro_op}
%     \|\widetilde{g}(z) - g(z)\|_2 \leqslant \alpha \|g(z)\|_2.
% \end{eqnarray}

% \begin{equation*}\label{ne}
%     F (z)=0.
% \end{equation*}
We consider Inexact ExtraGradient method (EG) listed as~\cref{alg:eg}.
% extragradient method (EG)
% \begin{align*}
%     & z^{k+1/2} = z^{k} - \eta \hspace{2pt} \widetilde{g}(z^k) \\
%     & z^{k+1} = z^{k} - \eta \hspace{2pt} \widetilde{g}(z^{k+1/2}),
% \end{align*}

\begin{algorithm}[H]
\caption{Inexact ExtraGradient Method  (EG).}
	\label{alg:eg}
\begin{algorithmic}[1]
\STATE {\bfseries Input:} $(z^0 = z_{\operatorname{start}}, \eta).$
%\STATE 
%\noindent {\bf Input:} Starting point $x_{\operatorname{start}}$, number of steps $N$.
\FOR {$k = 1, \dots, N$}
        \STATE $z^{k+1/2} = z^{k} - \eta  \widetilde{g}(z^k)$, 
        % \STATE
        $\quad z^{k+1} = z^{k} - \eta \widetilde{g}(z^{k+1/2})$.
\ENDFOR
\STATE \noindent {\bf Output:} $z^N$.
\end{algorithmic}
\end{algorithm}
% \na{Why do we need $\widetilde{g}(z)$ z superscript???}
%where $\widetilde{g}(z)$ defined in~\eqref{eq:relative_error_intro_op}. 

% Our analysis of this algorithm is based on the following assumption.

Our main result for NEs is that linear convergence of Inexact ExtraGradient Method is preserved for $g \in S_{\mu, L}$ if $\alpha$ is lower than some threshold that is $\mathcal{O}\left(\sqrt{\frac{\mu}{L}}\right)$. Thus, for a particular case of min-max problems, this method is more robust than Sim-GDA.

\begin{theorem}\label{teo:eg_alpha_upper_bound}
Let \cref{as:lipqsm} hold. There exists $\widehat \alpha = \mathcal{O}\left(\sqrt{\nicefrac{\mu}{L}}\right)$ s.t. if Assumption \ref{as:ne-inexact} holds with $\alpha < \widehat \alpha$, then Algorithm \ref{alg:eg} with parameters $(z^0,\frac{1}{cL})$ with a numerical constant $c$, generates $z^N$ s.t.
\begin{equation*}
    \|z^N-z^*\|_2^2 \leqslant \|z^0-z^*\|_2^2\left( 1 - \frac{\mu}{2cL} \right)^N.
\end{equation*}
%For $g \in S_{\mu, L}$ (\cref{as:lipqsm}), $\exists \widehat \alpha$: $\widehat  \alpha = \mathcal{O}\left(\sqrt{\nicefrac{\mu}{L}}\right)$, s.t. if $\alpha < \widehat \alpha$ then inexact EG retains linear convergence with contraction factor of $1 - \frac{1}{2}\eta\mu$, $\eta \sim \frac{1}{L}$.
\end{theorem}
We defer additional details and the proof to Appendix~\ref{appendix ne}.

\section{Numerical experiments} \label{sec:numerical_results}
In this section we provide several numerical experiments illustrating the theoretical results of Sections~\ref{section accelerated main} and~\ref{section min max}. Our code is available at \url{https://anonymous.4open.science/r/RelativeError-FE6C}.

% \begin{figure}[H]
% \center{\includegraphics[width=1\linewidth]{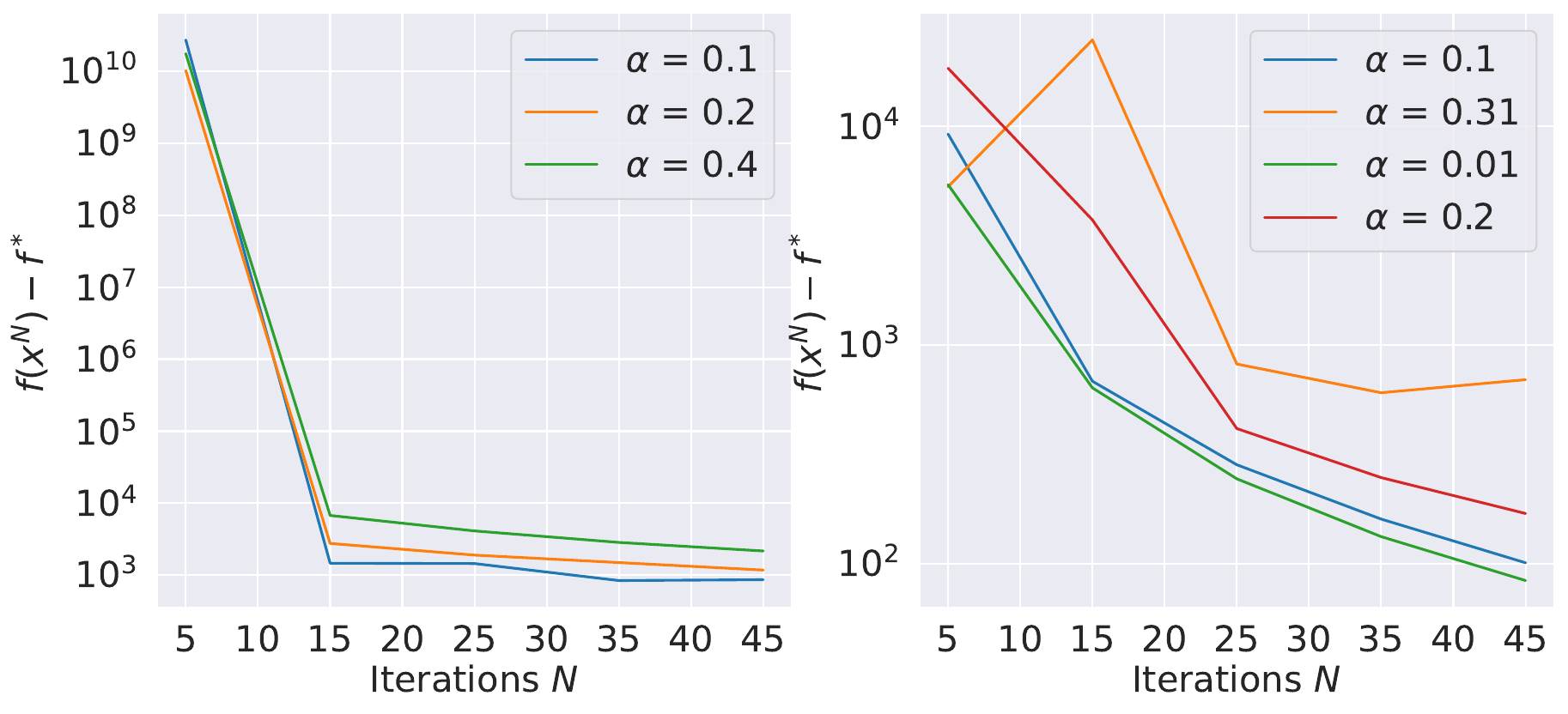}}
% \caption{\centering PEP comparison RE-AGM (left) and STM (right) , $L = 1000, \mu = 0.001$}
% \label{fig:convex:pep:1000:0001}
% \end{figure}

\begin{figure}[ht]
    \centering
    \begin{minipage}[htp]{0.77\textwidth}
        \centering
        \includegraphics[width=1\linewidth]{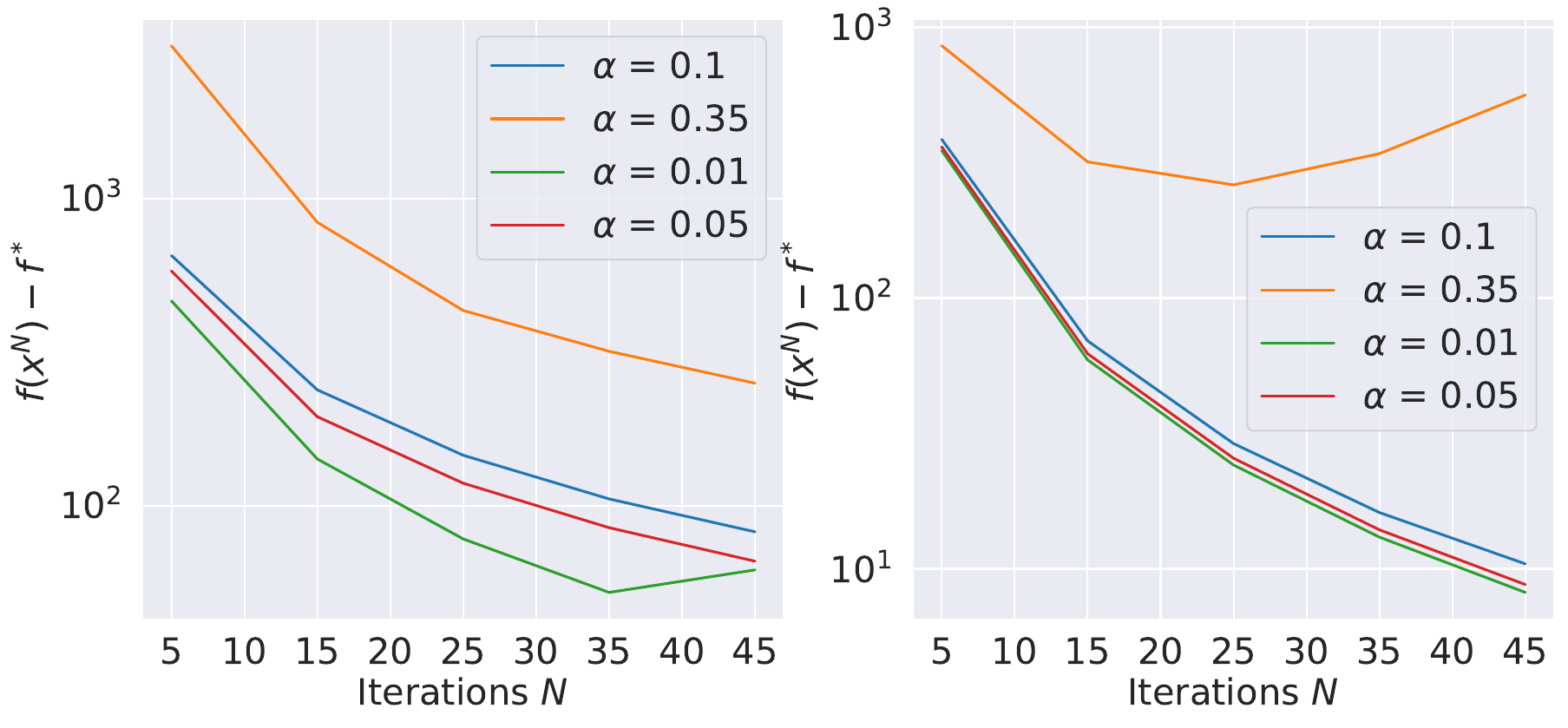}
        \vspace{-5mm}
    	\caption{\centering PEP comparison RE-AGM (left) and STM (right),  $L = 100, \mu = 0.01$}
        \label{fig:convex:pep:100_001}
    \end{minipage}
\end{figure}
\begin{figure}[t]
    \centering
    \begin{minipage}[htp]{0.77\textwidth}
        \centering
        \includegraphics[width=1\linewidth]{PEP/convex/1000_0001.pdf}
        \vspace{-5mm}        
    	\caption{\centering PEP comparison RE-AGM (left) and STM (right),  $L = 1000, \mu = 0.001$}
        \label{fig:convex:pep:1000:0001}
    \end{minipage}
\end{figure}

% \begin{figure}[h]
% \center{\includegraphics[width=1\linewidth]{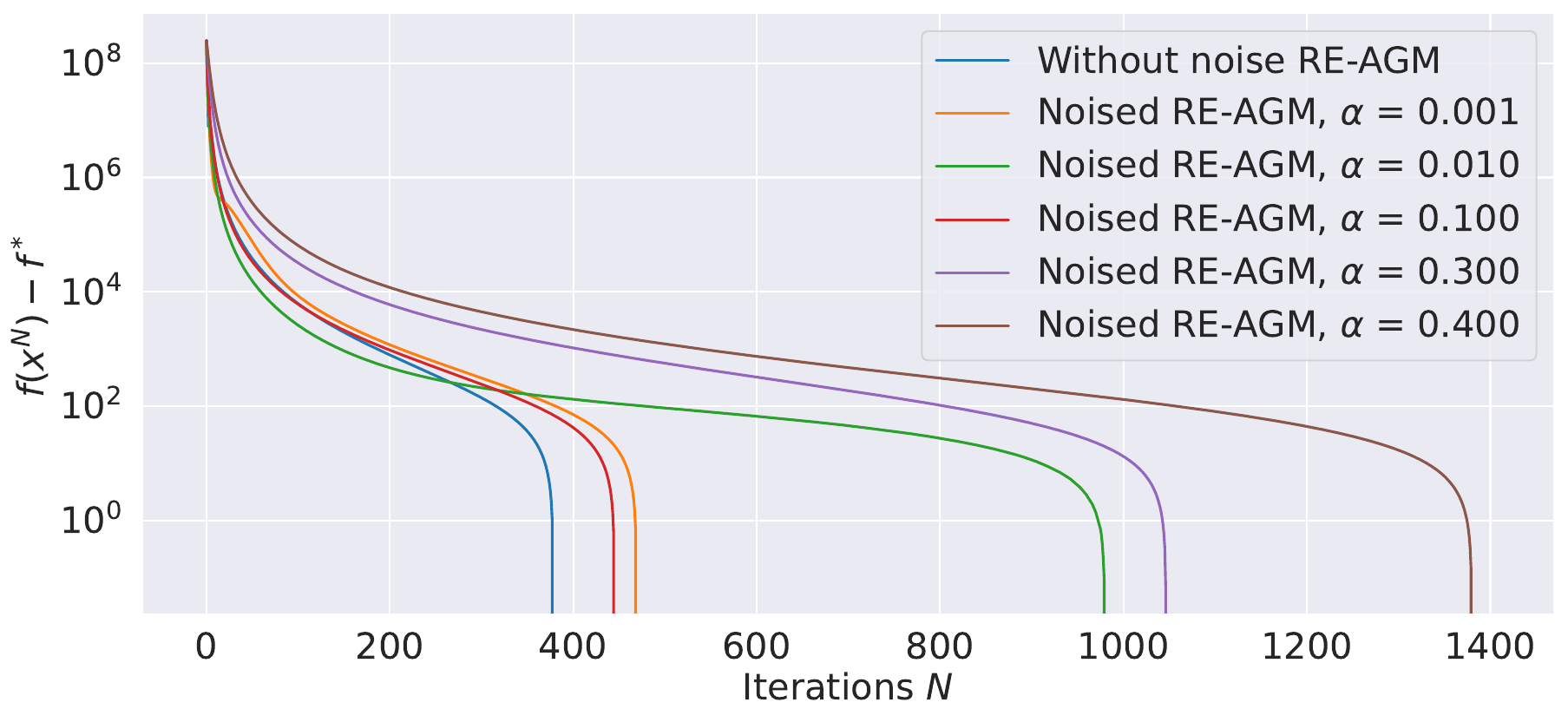}}
% \caption{\centering Comparison with random noise, $L = 1000, \mu = 0.001$}
% \label{fig:convex:1000:0001}
% \end{figure}
\begin{figure}[ht]
    \centering
    \begin{minipage}[htp]{0.77\textwidth}
        \centering
        \includegraphics[width=1\linewidth]{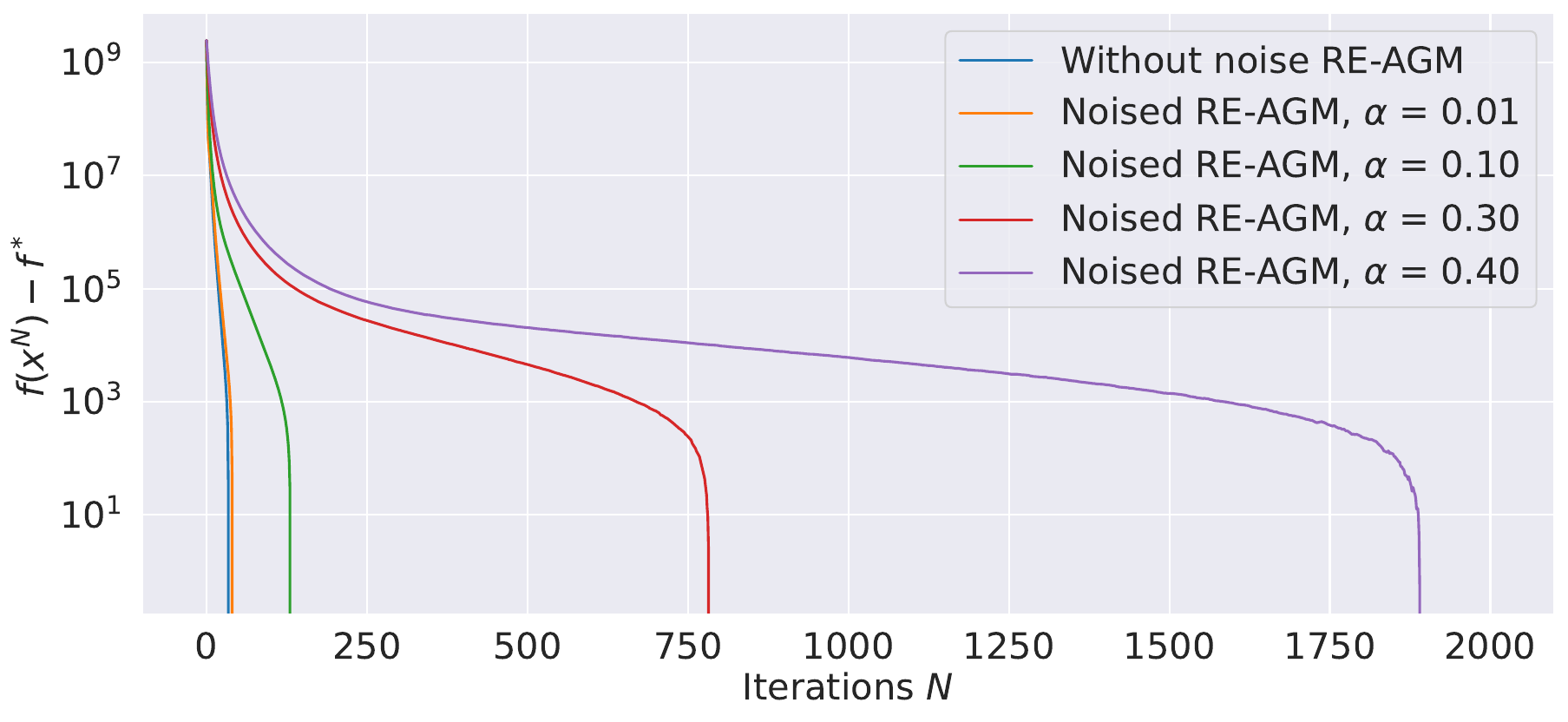}
        \vspace{-6mm}
        \caption{\centering RE-AGM, $L = 10000, \mu = 100$}
        \label{fig:convex:10000:100}
    \end{minipage}
\end{figure}
\begin{figure}[ht]
        \centering
    \begin{minipage}[htp]{0.77\textwidth}
        \centering
        \includegraphics[width=1\linewidth]{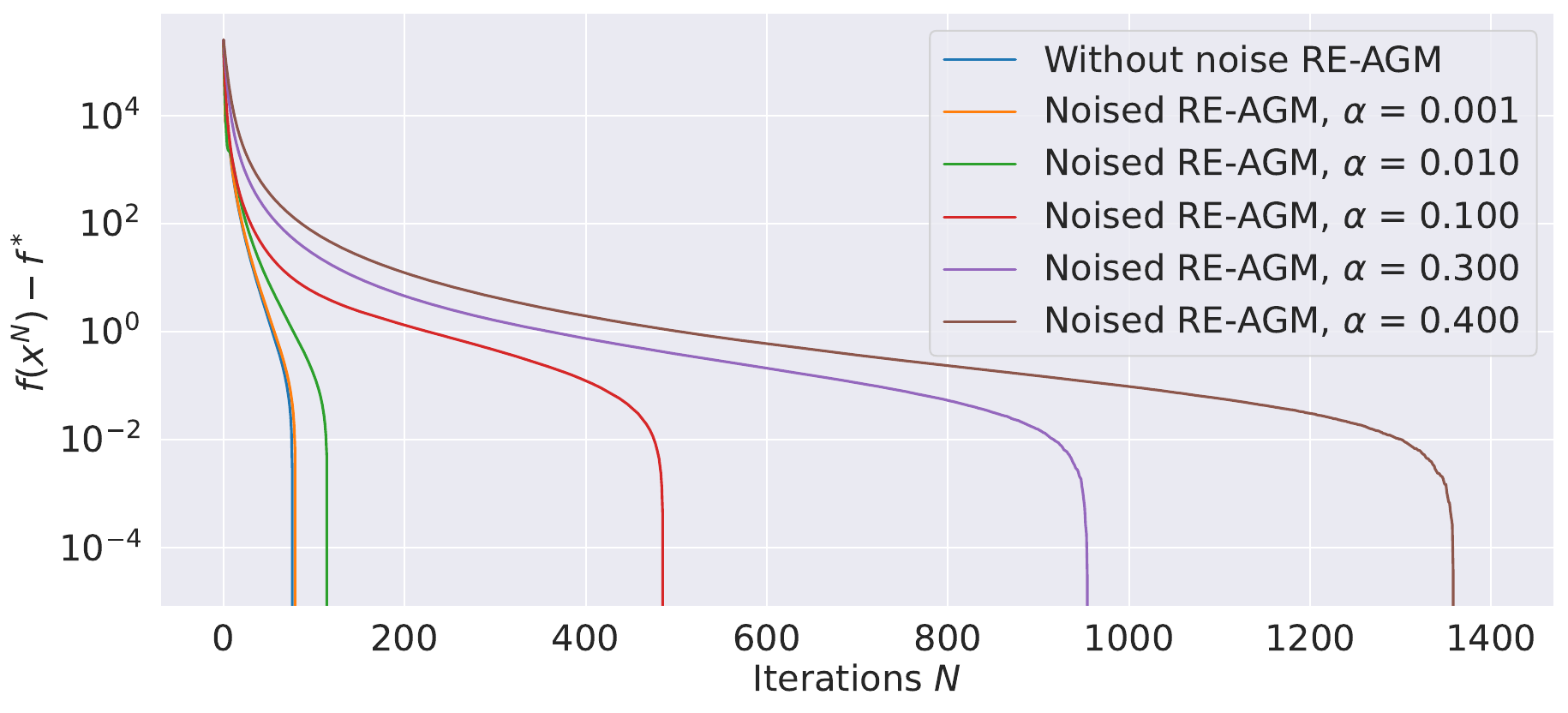}
        \vspace{-6mm}
        \caption{\centering RE-AGM, $L = 1, \mu = 0.0001$}
        \label{fig:convex:1:0001}
    \end{minipage}
\end{figure}
\begin{figure}[ht]
        \centering
    \begin{minipage}[htp]{0.77\textwidth}
        \centering        \includegraphics[width=1\linewidth]{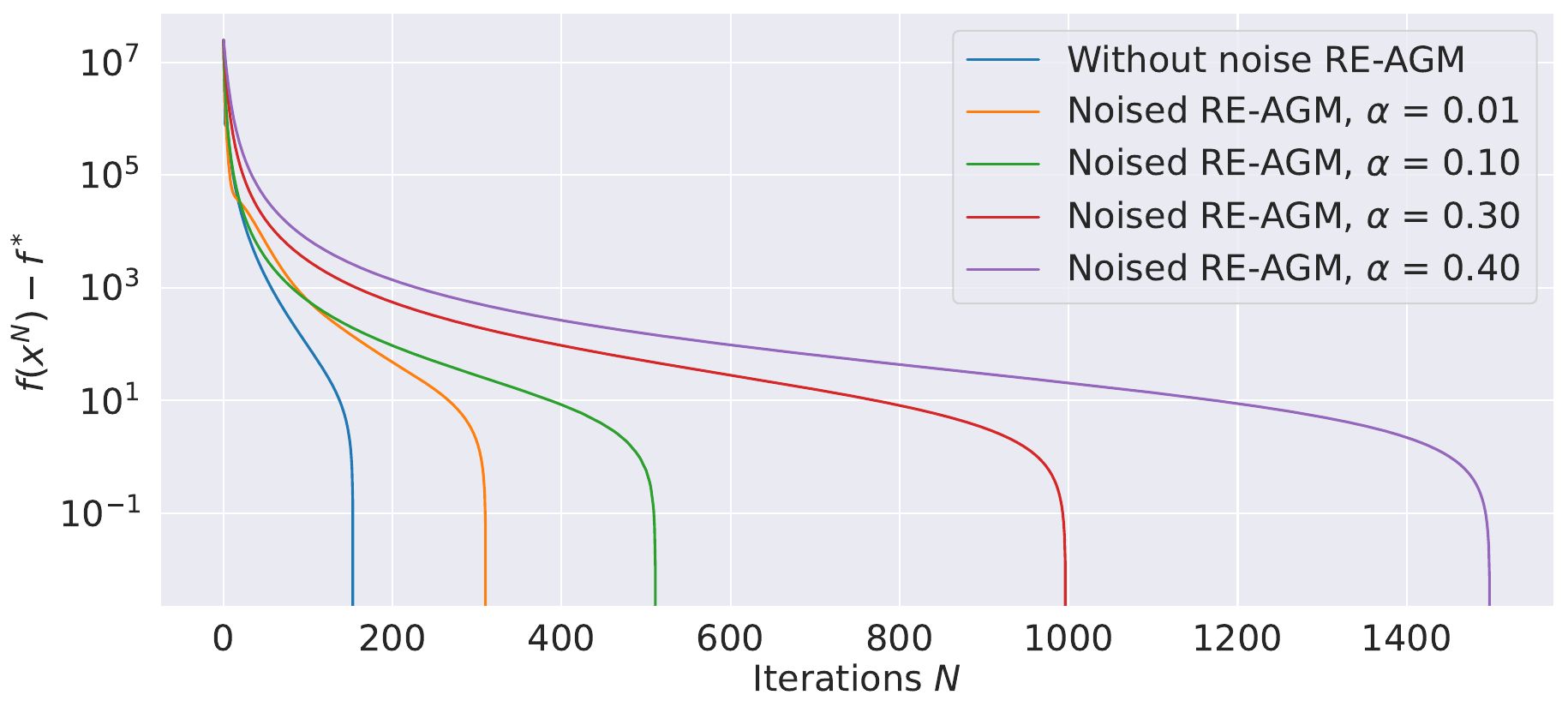}
            \vspace{-6mm}
        \caption{\centering RE-AGM, $L = 100, \mu = 0.01$}
        \label{fig:convex:100:001}
    \end{minipage}
\end{figure}
\begin{figure}[ht]
        \centering
    \begin{minipage}[htp]{0.77\textwidth}
        \centering
        \includegraphics[width=1\linewidth]{RandomNoise/convex/1000_0001.pdf}
        \vspace{-6mm}
        \caption{\centering RE-AGM, $L = 10000, \mu = 0.001$}
        \label{fig:convex:10000:0001}
    \end{minipage}
\end{figure}

% \begin{figure}[H]
% \center{\includegraphics[width=1\linewidth]{RandomNoise/convex/1_0001.pdf}}
% \caption{\centering RE-AGM, $L = 1, \mu = 0.001$}
% \label{fig:convex:1:0001app}
% \end{figure}

% \begin{figure}[H]
% \center{\includegraphics[width=1\linewidth]{RandomNoise/convex/10000_100.pdf}}
% \caption{\centering RE-AGM, $L = 10000, \mu = 100$}
%  \label{fig:convex:10000:100_app}
% \end{figure}

\begin{figure*}[ht]
    \centering
    \begin{minipage}[htp]{0.77\textwidth}
        \centering
        \includegraphics[width=1\linewidth]{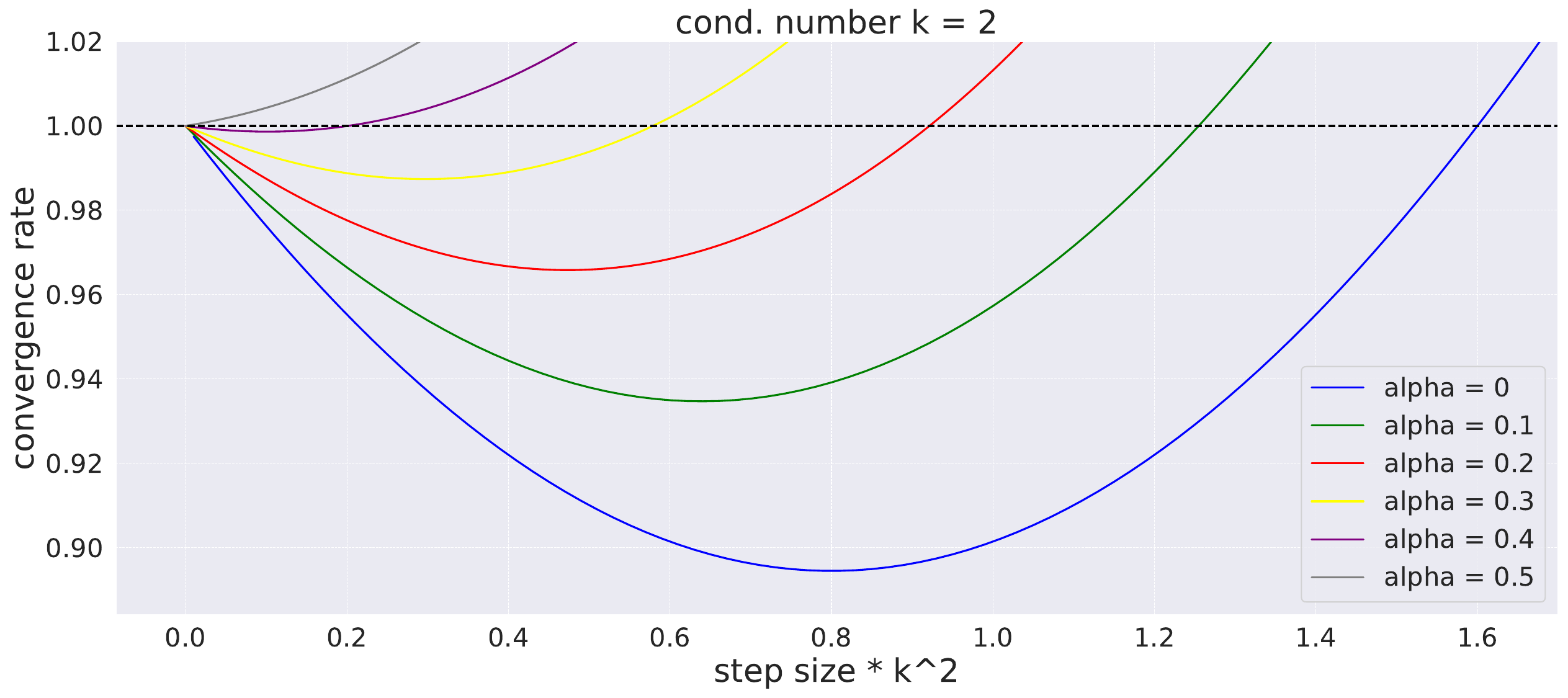}
        \vspace{-6mm}
        \caption{\centering Inexact Sim-GDA ($\kappa=2$)}
        \label{fig:lyapunov_sim2}
    \end{minipage}
    \begin{minipage}[htp]{0.77\textwidth}
        \centering
        \includegraphics[width=1\linewidth]{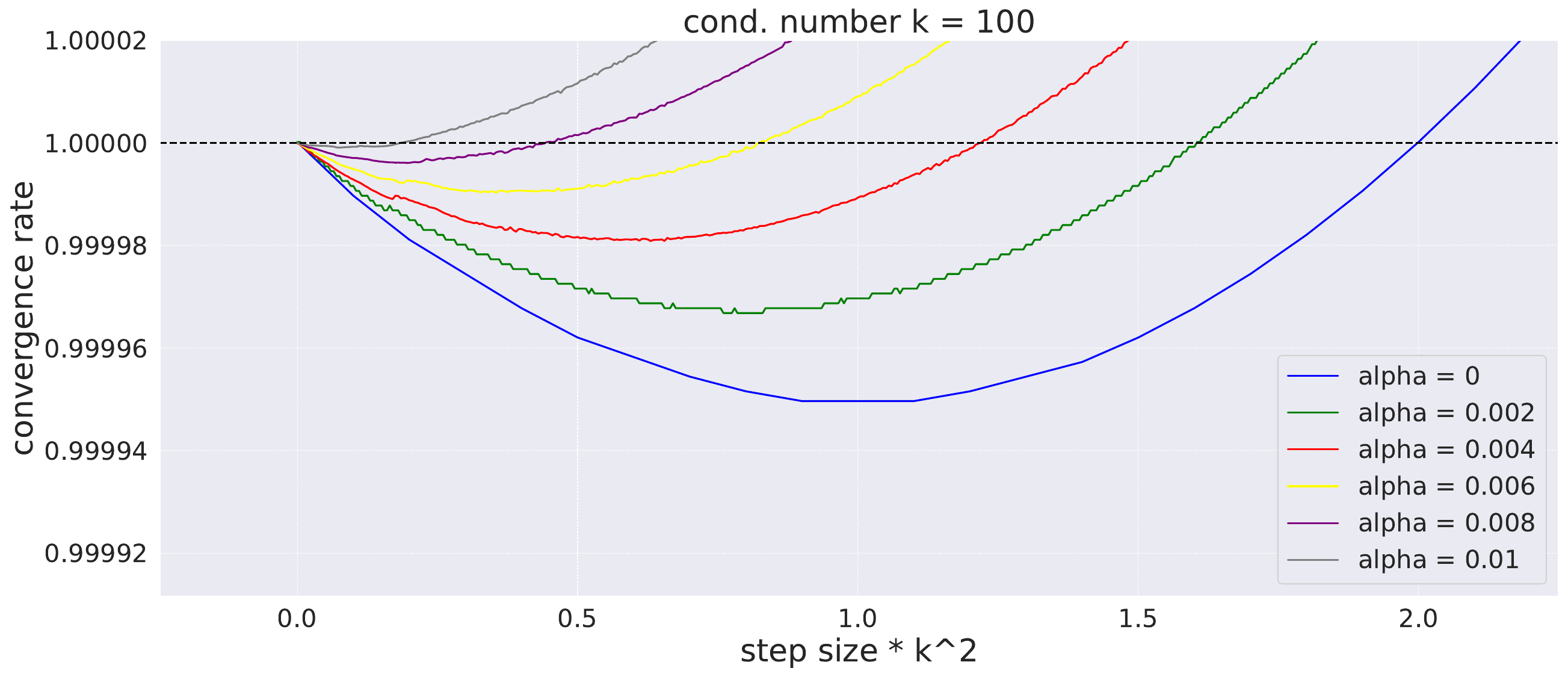}
        \vspace{-6mm}
        \caption{\centering Inexact Sim-GDA ($\kappa=100$)}
        \label{fig:lyapunov_sim100}
    \end{minipage}
\end{figure*}
\begin{figure*}[ht]
    \centering
    \begin{minipage}[htp]{0.77\textwidth}
        \centering
        \includegraphics[width=1\linewidth]{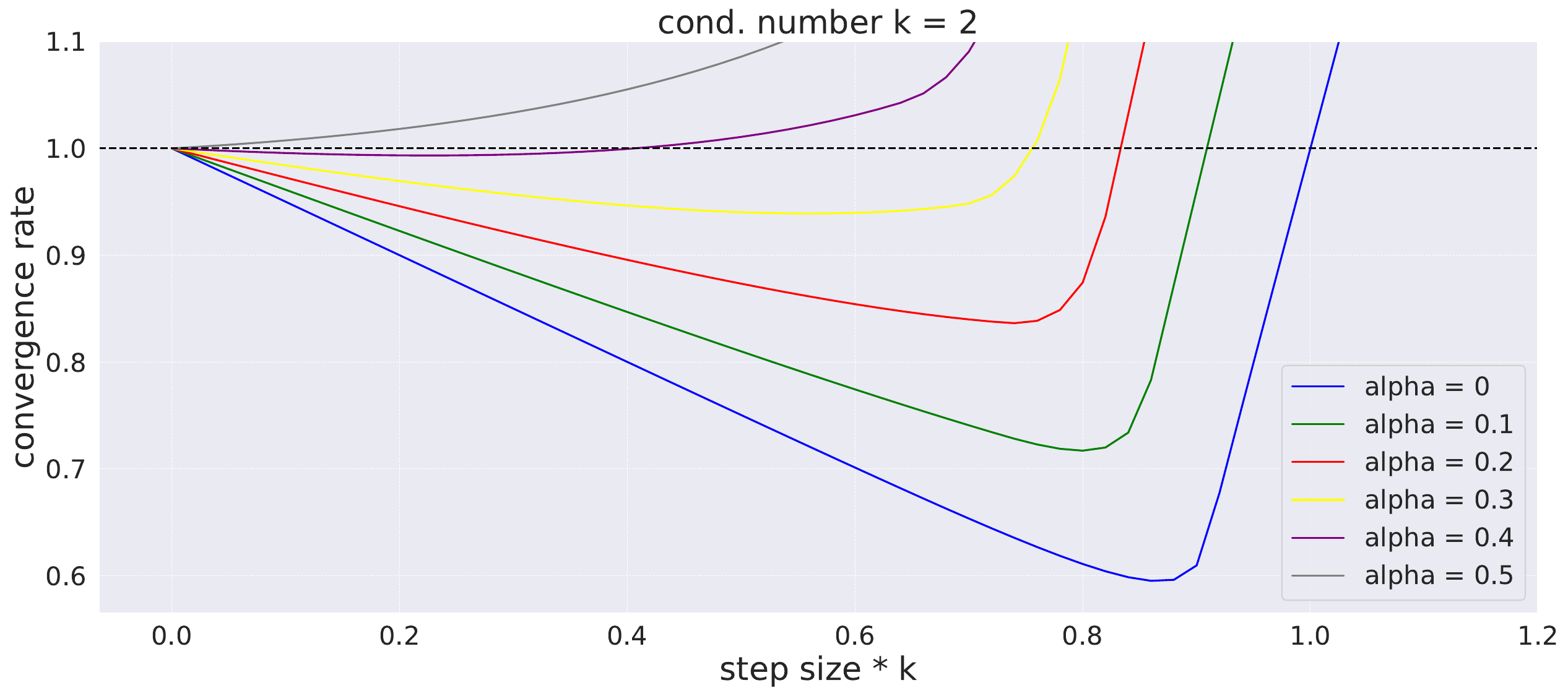}
        \vspace{-6mm}
        \caption{\centering Inexact Alt-GDA ($\kappa=2$)}
        \label{fig:lyapunov_alt2}
    \end{minipage}
    \begin{minipage}[htp]{0.77\textwidth}
        \centering
        \includegraphics[width=1\linewidth]{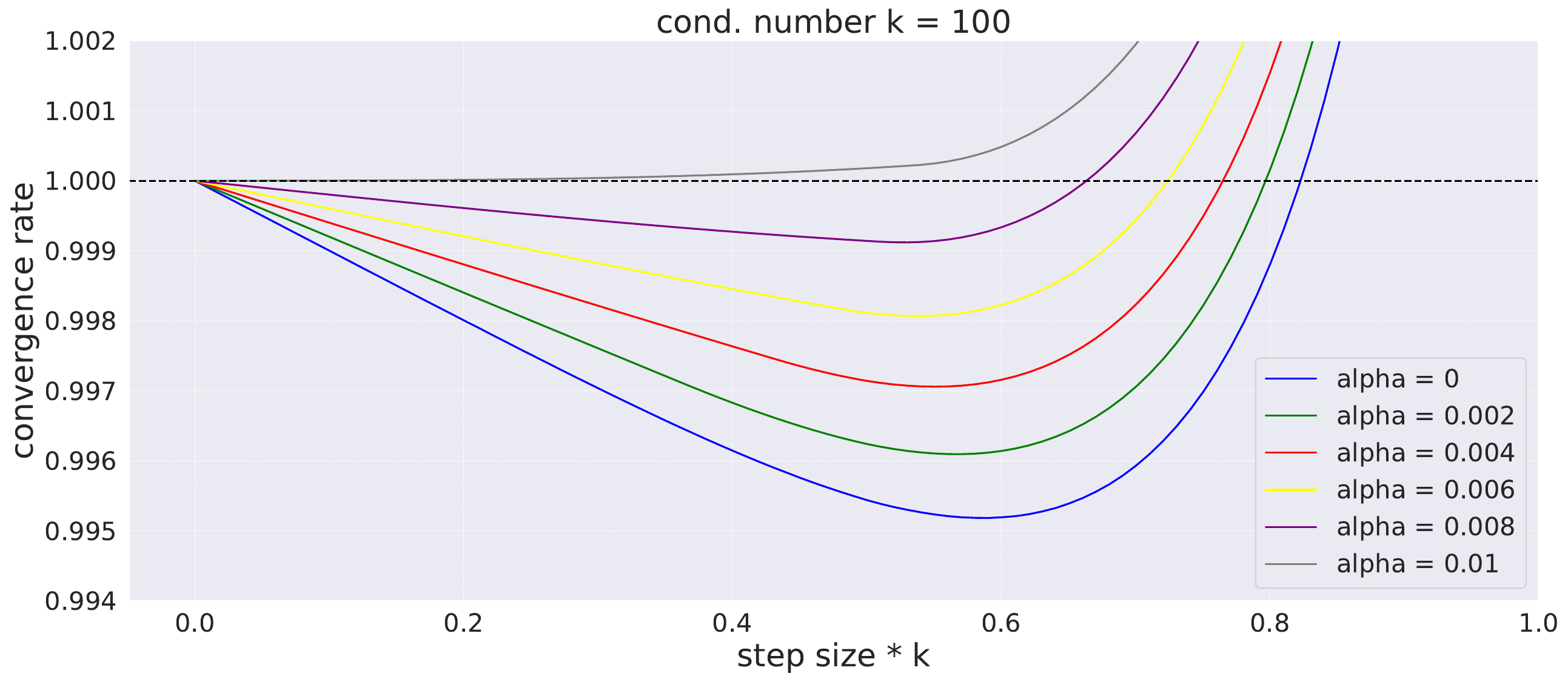}
        \vspace{-6mm}
        \caption{\centering Inexact Alt-GDA ($\kappa=100$)}
        \label{fig:lyapunov_alt100}
    \end{minipage}
\end{figure*}
\begin{figure*}[ht]
    \centering
    \begin{minipage}[htp]{0.77\textwidth}
        \centering
        \includegraphics[width=1\linewidth]{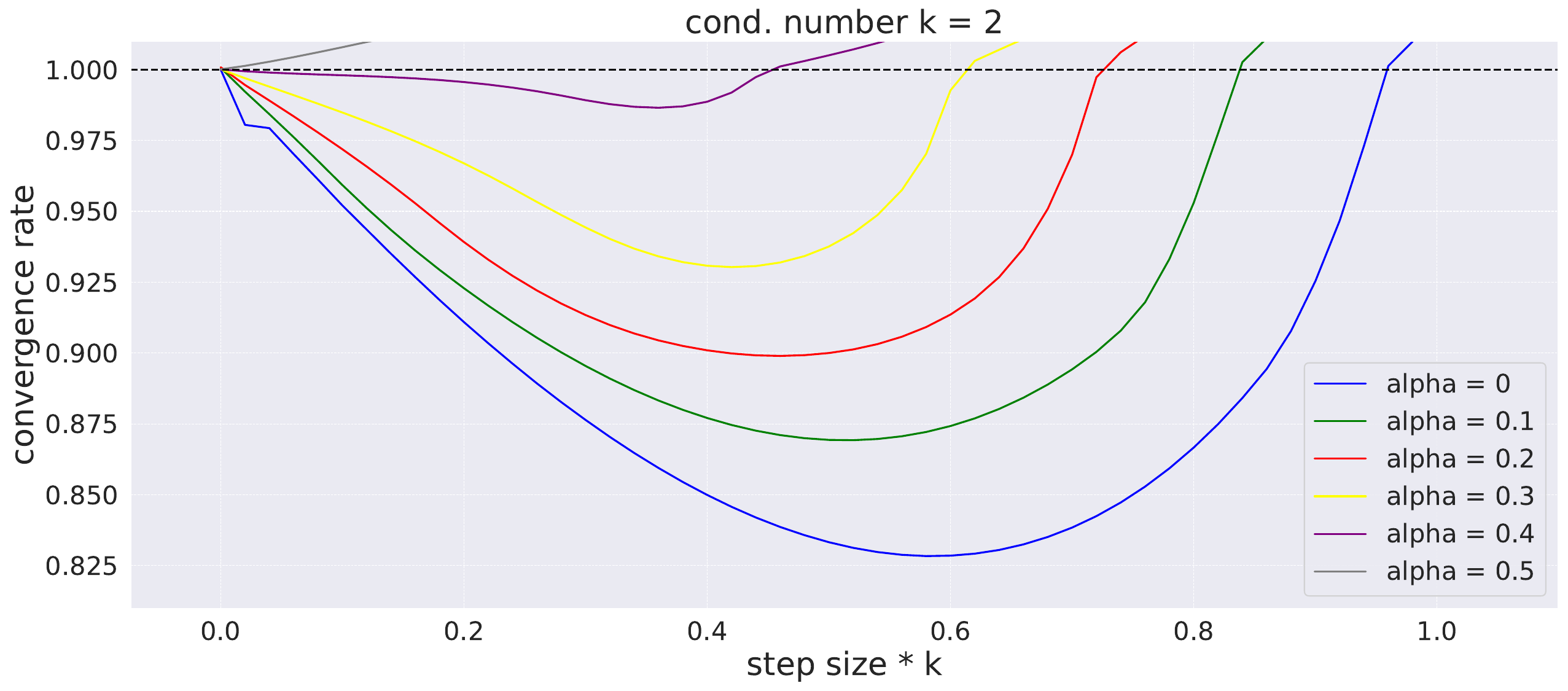}
        \vspace{-6mm}
        \caption{\centering Inexact EG ($\kappa=2$)}
        \label{fig:lyapunov_eg2}
    \end{minipage}
    \begin{minipage}[htp]{0.77\textwidth}
        \centering
        \includegraphics[width=1\linewidth]{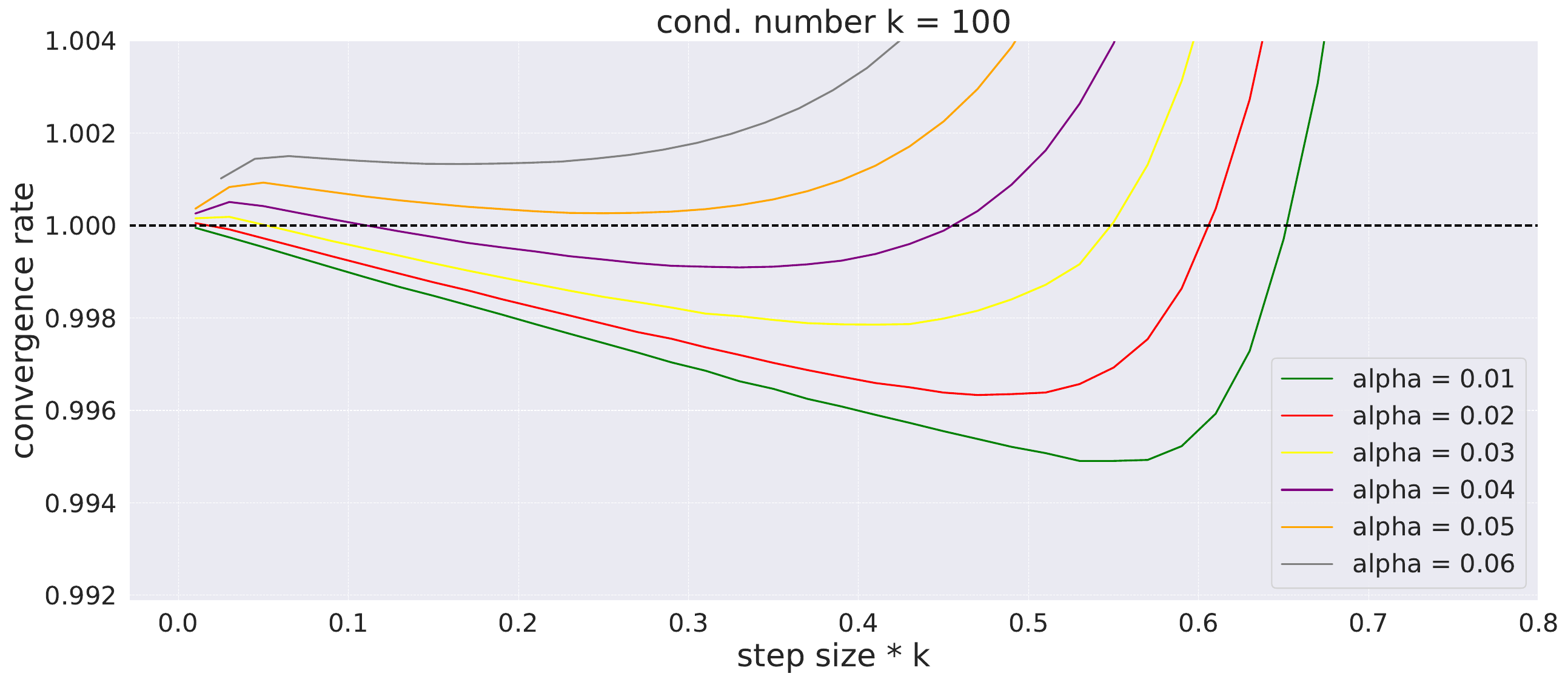}
        \vspace{-6mm}
        \caption{\centering Inexact EG ($\kappa=100$)}
        \label{fig:lyapunov_eg100}
    \end{minipage}
\end{figure*}

\subsection{Minimization problems}
For problem \eqref{optim}, we compare our RE-AGM Algorithm~\ref{alg:nesterov corrected} introduced in Section~\ref{section accelerated main} with the similar triangles method (STM) analyzed in the inexact setting in \cite{vasin2023accelerated,kornilov2023intermediate} and listed as Algorithm~\ref{alg:stm}. 

% convex case and method introduced at Section~\ref{section accelerated} a comparison is made with STM (similar triangles method) - Algorithm~\ref{alg:stm}, considered at~\cite{vasin2023accelerated}. For a similarly formulated method for a not strongly convex case in the paper~\cite{kornilov2023intermediate} accelerated convergence was achieved using restart techniques for $\alpha \lesssim \left(\mu/L\right)^{1/2}$.

% \begin{minipage}[htp]{0.48\textwidth}        
\begin{algorithm}[H]
\caption{ STM (Similar Triangles Method)}
	\label{alg:stm}
\begin{algorithmic}[1]
\STATE {\bfseries Input:} $(L, \mu, x_{\operatorname{start}}, \alpha)$.
%\STATE 
%\noindent {\bf Input:} Starting point $x_{\operatorname{start}}$, number of steps $N$.
\STATE {\bf Set} 
$y^0 = x_{start}$, $\alpha_0 = \frac{1}{L}$, $A_0 = \frac{1}{L}$, $\widehat \mu = \frac{\mu}{2}$,
% \STATE {\bf Set} 
$z^0 = y^0 - \alpha_0 \widetilde{\nabla} f(y^0)$, $x^0 = z^0$.
\FOR {$k = 1, \dots, N$}
        \STATE $\alpha_k = \frac{1 + \widehat \mu A_{k - 1}}{2L} + \sqrt{\frac{(1 + \widehat \mu A_{k - 1})^2}{4L^2} + \frac{A_{k - 1}(1 + \widehat \mu A_{k - 1})}{L}}$, 
        % \STATE
        $\quad A_k = A_{k - 1} + \alpha_k,$
        \STATE $y^k = \frac{A_{k - 1} x^{k - 1} + \alpha_k z^{k - 1}}{A_k}$, 
        % \STATE 
        $\quad x^k = \frac{A_{k - 1} x^{k - 1} + \alpha_k z^k}{A_k}$,
        \STATE 
        $z^k = z^{k - 1} - \frac{\alpha^{k}}{1 + \widehat \mu A_k}\left(\widetilde{\nabla}f(y^k) + \widehat \mu (z^{k - 1} - y^k) \right)$.
\ENDFOR
\STATE 
\noindent {\bf Output:} $x^N.$
\end{algorithmic}
\end{algorithm}
% \end{minipage}

The first experiment is based on the technique called Performance Estimation Problem (PEP)~\cite{pepit2022,drori2014performance}. This technique allows one to construct worst-case functions for optimization algorithms. We consider Algorithm~\ref{alg:nesterov corrected} and Algorithm~\ref{alg:stm} that are run for $50$ iterations and using the PEP technique with different values of $\alpha$  find such worst-case functions in order to establish the fact of convergence/divergence of the method. Such PEP-method is based on the optimization problem
\begin{eqnarray*}
    \max_{f, x^0, n} f(x^N) - f^* 
\end{eqnarray*}
subject to:
\begin{equation*}
    \begin{gathered}
        f: \mathbb{R}^d \to \mathbb{R} \text{ satisfies Assumption}~\ref{as:Lsmooth+muConv}, 
        % \\
        \|x^0 - x^* \|_2 \leqslant R, 
        \\
        \|\widetilde{\nabla} f(y^k) - \nabla f(y^k) \|_2 \leqslant \alpha \|\nabla f(y^k) \|_2, 
        \\
        y^{k + 1}, u^{k + 1}, x^{k + 1} = \text{RE-AGMstep}\left(y^k, u^k, x^k, \widetilde{\nabla} f(y^k)\right)
    \end{gathered}
\end{equation*}
where $R, N, \alpha$ are given parameters. 

% One of the methods for obtaining convergence estimates is the PEP (Performance Estimation Problem) method~\cite{pepit2022},\cite{drori2014performance}. By running Algorithm~\ref{alg:nesterov corrected} and Algorithm~\ref{alg:stm} for $50$ iterations using the PEP technique with different $\alpha$ values we can use the fact of convergence/divergence of the method. PEP-method considers optimization problem:
% \begin{equation*}
%     f(x^N) - f^* \to \max_{f, x^0, n}
% \end{equation*}
% subject to:
% \begin{equation*}
%     \begin{gathered}
%         f: \mathbb{R}^d \to \mathbb{R} \text{ satisfies Assumption}~\ref{as:Lsmooth+muConv}, \\
%         \|x^0 - x^* \|_2 \leqslant R, \\
%         \|\widetilde{\nabla} f(y^k) - \nabla f(y^k) \|_2 \leqslant \alpha \|\nabla f(y^k) \|_2, 
%         \\
%         y^{k + 1}, u^{k + 1}, x^{k + 1} = \text{RE-AGMstep}\left(y^k, u^k, x^k, \widetilde{\nabla} f(y^k)\right)
%     \end{gathered}
% \end{equation*}
% where $R, N, \alpha$ are given parameters. 

Such optimization problems can be reformulated as semi-definite programming problems (SDPs) and solved numerically using standard solvers.

Figure~\ref{fig:convex:pep:100_001} demonstrates the advantage of RE-AGM over STM with relative error (\cref{eq:relative_error}) when  $\alpha = 0.35$ since STM diverges in this case. 
% \begin{figure}[H]
% \center{\includegraphics[width=1\linewidth]{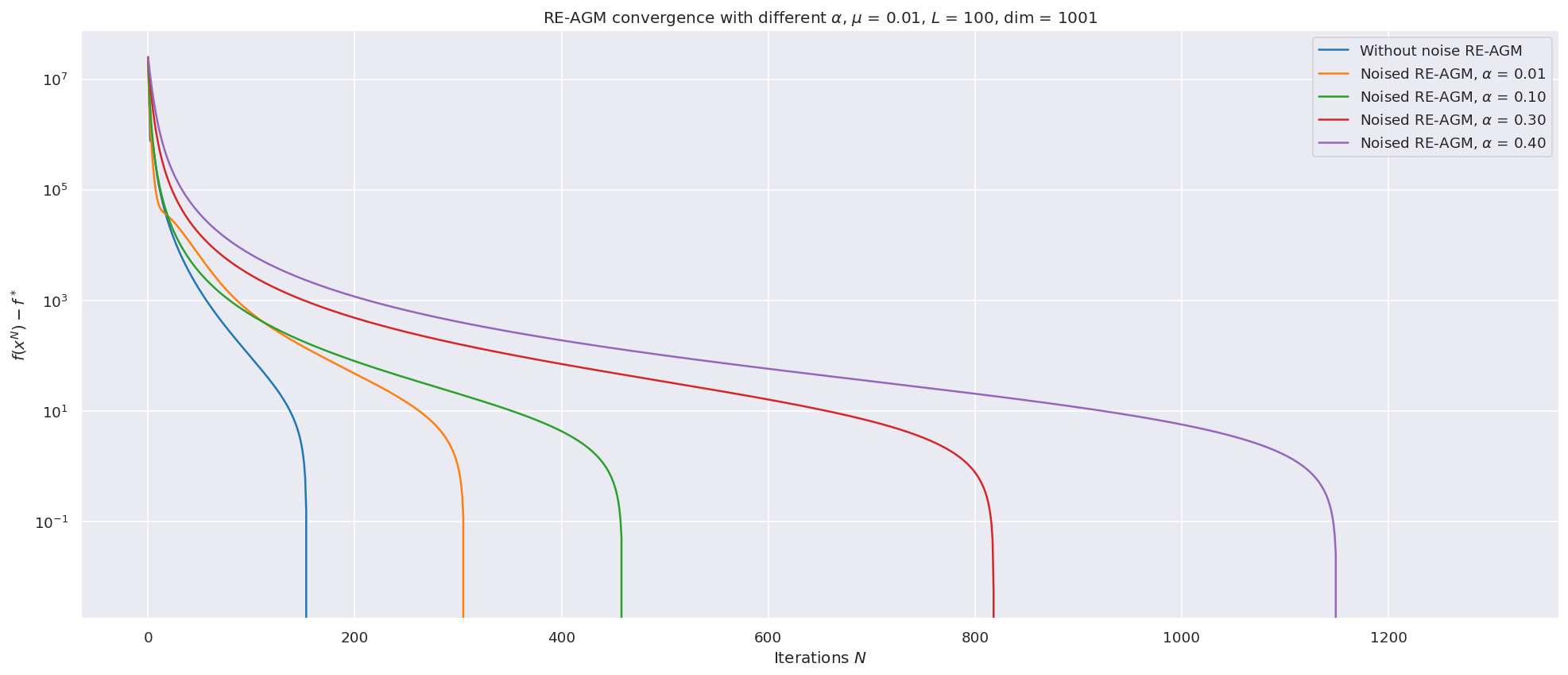}}
% \caption{\centering Comparison with random noise, $L = 100, \mu = 0.01$}
% \label{fig:convex:100:001}
% \end{figure}
% Additional plot to Figure~\ref{fig:convex:pep:100_001} we provide comparison to Algorithm~\ref{alg:stm} for smaller value $\frac{\mu}{L}$.
For a smaller value of $\nicefrac{\mu}{L}$, Figure~\ref{fig:convex:pep:1000:0001} demonstrates that under certain values of $\alpha$, STM may start to diverge while our RE-AGM converges, however, with a slower rate. 
We should note, that 
the current theoretical results both for STM and RE-AGM are that they are guaranteed to converge if  $\alpha \lesssim \sqrt{\nicefrac{\mu}{L}}$, but the values of $\sqrt{\nicefrac{\mu}{L}}$ for Figure~\ref{fig:convex:pep:100_001} and Figure~\ref{fig:convex:pep:1000:0001} are equal $0.01$ and $0.001$ respectively and both algorithms preserve convergence for larger value $\alpha = 0.2$. Yet, our proposed RE-AGM demonstrates better robustness.

% To visualize convergence of Algorithm~\ref{alg:nesterov corrected} one can use random, uniformly distributed noise of a given scale $\alpha$, since it is problematic to run PEP for a large number of steps.

In our second experiment, we illustrate the convergence of RE-AGM for a specific function constructed in~\cite{nesterov2018lectures}:
\begin{eqnarray} \label{mu > 0 nesterov func}
    f(x) = \frac{\mu \left(\kappa - 1\right)}{8} \left(x_1^2 + \displaystyle\sum_{j = 1}^{n - 1} {\left(x_j - x_{j + 1}\right)^2} - 2x_1\right) 
    % \\
    + \frac{\mu}{2} \|x \|_2^2,
\end{eqnarray}
where $ \kappa = \nicefrac{L}{\mu}$. This function is usually referred to as the worst-case function from the class of functions satisfying Assumption \ref{as:Lsmooth+muConv} since the lower iteration complexity bound for any first-order method in this class is attained on this function.
Given that it is computationally problematic to run PEP-technique for a large number of steps, we use random uniform relative noise for each $\alpha$, i.e., we sample noise $r^k$ s.t. $\|r^k\|_2 = \alpha \|\nabla f(y^k) \|_2$. 
% \na{move figres, add references}
Figures~\ref{fig:convex:10000:100}-\ref{fig:convex:10000:0001} demonstrate convergence with different values of $\alpha$ and 
% Figures~\ref{fig:convex:10000:100}-\ref{fig:convex:10000:0001} 
show that RE-AGM preserves convergence for different values $\nicefrac{\mu}{L}$, but has worse convergence rate, which experimentally proves Lemma~\ref{upper bound for lemma} and Theorem~\ref{nesterov acc corrected convergence tau}.
% , however on plot convergence is better, than result of Theorem~\ref{nesterov acc corrected convergence tau}.

% More experiments for convex optimization provided in Appendix~\ref{appendix convex exps}.

\subsection{Sadle Point Problems and Nonlinear Equations}
We now move to the numerical experiments that verify our theoretical results for Sim-GDA and allow finding the actual thresholds for $\alpha$  for Alt-GDA and EG. To that end, we used PEP-like approach of automatic generation of Lyapunov functions that was introduced and thoroughly described in \cite{pmlr-v80-taylor18a}. The key idea of the approach is to reformulate the existence of a quadratic Lyapunov function that would verify a given linear convergence rate as a convex feasibility problem (SDP). The approach is tight meaning that if the problem does not have a solution then there is no such quadratic Lyapunov function that will verify the rate. The linear convergence rate itself acts as a parameter of the problem and it is possible to find the best verifiable linear convergence rate by applying a binary search.

We consider a class of SPPs with the objective $f(x) + y^\top A x - g(y)$, where both functions $f$ and $g$ satisfy Assumption \ref{as:Lsmooth+muConv} and matrix $A$ has largest singular value bounded by $L$.  
%composite functions with bilinear coupling 
% \begin{assumption} \label{as:Kclass}
% We say that convex-concave function $F(x,y)$ is a composite with bilinear coupling (i.e. belongs to the class of functions denoted as $K_{\mu_x\mu_yL_xL_yL_{xy}}$) if there exist such $f(x)$: $\mu_x$-strongly-convex with $L_x$-Lipschitz gradient, $g(y)$: $\mu_y$-strongly-convex with $L_y$-Lipschitz gradient and matrix $A$ with largest singular value bounded by $L_{xy}$ such that
% \begin{align*}
%     F(x,y) = f(x) + y^\top A x - g(y).
% \end{align*}
% \end{assumption}
Because the used PEP-like approach is extremely technical, we omit its detailed description. We note that the only difference from~\cite{pmlr-v80-taylor18a}, which we used as a guideline, lies in the convex-concave setting and the interpolation conditions for functions from the above class.
% $K_{\mu_x\mu_yL_xL_yL_{xy}}$ (see \cref{as:Kclass}). 
These interpolation conditions were presented in~\cite{Krivchenko_Gasnikov_kovalev_2024} and are essentially a combination of smooth strongly-convex interpolation conditions of \cite{Taylor2016} and the interpolation conditions for linear operator with bounded largest singular value~\cite{Bousselmi2024}. Therefore, adaptation of the said approach to the the setting of our paper is fairly straightforward, yet requires some effort.

% As said, we consider $K_{\mu_x\mu_yL_xL_yL_{xy}}$ -- the class of functions defined in \cref{as:Kclass}. 
% We let $\mu_x = \mu_y = \mu$, $L_x= L_y = L_{xy} = L$ and 
We let $\eta_x = \eta_y = \eta$ for every method and plot the curves $\rho_{min}(\eta)$  where $\rho_{min}$ is the smallest contraction factor that we were able to verify (i.e. the PEP problem was feasible). Contraction $\rho$ (i.e., convergence rate) is defined as in~\cite{pmlr-v80-taylor18a}. Note that the smaller is the contraction factor, the faster is the convergence. \par

The results for Sim-GDA (Figures~\ref{fig:lyapunov_sim2},\ref{fig:lyapunov_sim100}) and Alt-GDA (Figures~\ref{fig:lyapunov_alt2},\ref{fig:lyapunov_alt100}) show that both methods lose linear convergence for $\alpha \gtrsim \frac{\mu}{L}$. EG (Figures~\ref{fig:lyapunov_eg2},\ref{fig:lyapunov_eg100}) on the other hand, converges when $\alpha \gtrsim 0.5 \sqrt{\frac{\mu}{L}}$. We additionally verified the results with PEP: the setup is identical to the one described in~\cite{Krivchenko_Gasnikov_kovalev_2024} except the inexactness of the gradients. Further results of numerical experiments can be found in Appendix~\ref{appendix Lyapunov min max}.

These results imply that, for smooth strongly monotone saddle point problems, even non-accelerated first-order methods cease to converge in the worst case if the relative inexactness of the gradient reaches a certain threshold that depends on the condition number. For instance, numerical experiments strongly suggest that linear convergence of Sim-GDA and Alt-GDA ceases when $\alpha$ crosses the threshold of $\sim \frac{\mu}{L}$. With EG, however, the threshold appears to be $\sim \sqrt{\frac{\mu}{L}}$ which is very surprising and counter-intuitive because it means that EG is more robust than Sim-GDA and Alt-GDA. Given the connection of EG with accelerated methods for convex functions \cite{DBLP:journals/corr/abs-2202-04640, Cohen2020RelativeLI}, it seems like an opposite situation to how, in convex setting, non-accelerated methods are more robust than accelerated ones when solving minimization problems \cite{devolder2013intermediate}. \par

\newpage

\section*{Declarations}

\begin{itemize}
\item Funding

The research is supported by the Ministry of Science and Higher Education of the Russian Federation, project No. FSMG-2024-0011.

\item Conflict of interest.

The authors have no further relevant financial or non-financial interests to disclose except the funding mentioned above.

\item Ethics approval and consent to participate.

Not applicable.
\item Consent for publication

Not applicable.
\item Data availability 

Not applicable.
\item Materials availability

Not applicable.
\item Code availability.

The code is available at \url{https://anonymous.4open.science/r/RelativeError-FE6C}.
\item Author contribution.

All the authors contributed to the formation of ideas underlying the paper, obtaining the main results, writing the text.
\end{itemize}

\begin{appendices}
% \section{Missing Proofs~\cref{section accelerated main}}\label{appendix accelerated}
\section[Missing Proofs]{Missing Proofs~\texorpdfstring{\cref{section accelerated main}}{Section~\ref{section accelerated main}}}
\label{appendix accelerated}

Following \cite{nesterov2018lectures}, let us introduce a parameterized set of functions $\Psi$, its element defined for $c \in \mathbb{R}, \kappa \in \mathbb{R}^{++},$ and $u \in \mathbb{R}^d$,  as follows
\begin{equation} \label{qudaratic family}
    \psi(x | c, \kappa, u) = c + \frac{\kappa}{2} \| x - u \|_2^2, \quad \forall x \in \mathbb{R}^d. 
\end{equation}

According to~\cite{nesterov2018lectures}, we can mention to the following useful property of the class $\Psi$.

\begin{lemma} \label{psi comb}
Let $\psi_1, \psi_2 \in \Psi$. Then $\forall \eta_1, \eta_2, c_1, c_2 \in \mathbb{R}, \forall \kappa_1, \kappa_2 \in \mathbb{R}^{++}$, and $\forall u, v \in \mathbb{R}^d$, we have 
\[
    \eta_1 \psi_1(x|c_1, \kappa_1, u) + \eta_2 \psi_2(x|c_2, \kappa_2, v) = \psi_3(x|c_3, \kappa_3, w), 
\]
where 
\begin{align*}
    c_3 = \eta_1 c_1 + \eta_2 c_2 + \frac{\eta_1\eta_2\kappa_1\kappa_2}{2(\eta_1\kappa_1 + \eta_2\kappa_2)} \|u  - v \|_2^2, \quad \kappa_3 = \eta_1\kappa_1 + \eta_2\kappa_2, \quad w = \frac{\eta_1\kappa_1 u +\eta_2\kappa_2 v}{\eta_1\kappa_1 + \eta_2\kappa_2}.
\end{align*}
\end{lemma}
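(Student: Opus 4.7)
The proof is a direct algebraic manipulation: expand both quadratics, collect terms in $x$, and complete the square. There is no serious obstacle — this is a bookkeeping exercise — so the plan focuses on the order of computations and identifying the new parameters $(c_3,\kappa_3,w)$.

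First, I would expand both $\psi_1(x|c_1,\kappa_1,u)$ and $\psi_2(x|c_2,\kappa_2,v)$ according to the definition \eqref{qudaratic family}, and write the linear combination $\eta_1\psi_1+\eta_2\psi_2$ as a polynomial of degree two in $x$:
\begin{equation*}
\eta_1\psi_1+\eta_2\psi_2 \;=\; \frac{\eta_1\kappa_1+\eta_2\kappa_2}{2}\,\|x\|_2^2 \;-\; \bigl\langle x,\;\eta_1\kappa_1 u+\eta_2\kappa_2 v\bigr\rangle \;+\; \eta_1 c_1+\eta_2 c_2+\tfrac{\eta_1\kappa_1}{2}\|u\|_2^2+\tfrac{\eta_2\kappa_2}{2}\|v\|_2^2 .
\end{equation*}
From this expression the coefficient of $\|x\|_2^2/2$ is $\kappa_3:=\eta_1\kappa_1+\eta_2\kappa_2$, which is the stated value; the linear term then suggests the shift
\begin{equation*}
w \;:=\; \frac{\eta_1\kappa_1 u+\eta_2\kappa_2 v}{\eta_1\kappa_1+\eta_2\kappa_2},
\end{equation*}
exactly as claimed in the lemma statement.

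Next I would complete the square. Writing $\frac{\kappa_3}{2}\|x\|_2^2 -\langle x, \kappa_3 w\rangle = \frac{\kappa_3}{2}\|x-w\|_2^2 - \frac{\kappa_3}{2}\|w\|_2^2$, the resulting expression becomes $\frac{\kappa_3}{2}\|x-w\|_2^2 + c_3$, where
\begin{equation*}
c_3 \;=\; \eta_1 c_1 + \eta_2 c_2 + \tfrac{\eta_1\kappa_1}{2}\|u\|_2^2+\tfrac{\eta_2\kappa_2}{2}\|v\|_2^2 - \tfrac{\kappa_3}{2}\|w\|_2^2 .
\end{equation*}
This already shows membership in $\Psi$; the remaining task is to simplify $c_3$ to the form stated in the lemma.

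The last step is the only one worth a careful computation: verify the identity
\begin{equation*}
\tfrac{\eta_1\kappa_1}{2}\|u\|_2^2+\tfrac{\eta_2\kappa_2}{2}\|v\|_2^2 - \tfrac{\kappa_3}{2}\|w\|_2^2 \;=\; \frac{\eta_1\eta_2\kappa_1\kappa_2}{2(\eta_1\kappa_1+\eta_2\kappa_2)}\,\|u-v\|_2^2 .
\end{equation*}
I would clear denominators by multiplying both sides by $2\kappa_3$ and substitute $\kappa_3 w=\eta_1\kappa_1 u+\eta_2\kappa_2 v$; the left-hand side becomes $\eta_1\kappa_1\kappa_3\|u\|_2^2+\eta_2\kappa_2\kappa_3\|v\|_2^2-\|\eta_1\kappa_1 u+\eta_2\kappa_2 v\|_2^2$, and expanding the last square produces cancellation of the $\eta_i^2\kappa_i^2\|\cdot\|_2^2$ terms, leaving precisely $\eta_1\eta_2\kappa_1\kappa_2(\|u\|_2^2-2\langle u,v\rangle+\|v\|_2^2)=\eta_1\eta_2\kappa_1\kappa_2\|u-v\|_2^2$. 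This identifies $c_3$ with the formula in the lemma and completes the proof. No additional machinery is required beyond the algebraic identity for completing the square.
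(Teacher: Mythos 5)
Your proof is correct: the expansion, the identification of $\kappa_3$ and $w$, and the verification of the constant-term identity (after multiplying by $2\kappa_3$ and substituting $\kappa_3 w=\eta_1\kappa_1 u+\eta_2\kappa_2 v$) all check out, and this completing-the-square argument is exactly the standard one behind this lemma, which the paper itself does not re-prove but simply cites from \cite{nesterov2018lectures}. The only implicit assumption worth noting is $\eta_1\kappa_1+\eta_2\kappa_2>0$ (needed both for the denominator and for $\psi_3\in\Psi$), which holds in the paper's application where $\eta_1=1-a_k$ and $\eta_2=a_k$ with $a_k\in(0,1)$.
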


Also from~\cite{nesterov2018lectures} we mention the following simple lemma.
\begin{lemma} \label{conv lemma}
Let $0 < \lambda < 1$, $\psi_0, \psi \in \Psi$, such that 
\begin{equation*}
f(z) \leqslant \min_{x \in \mathbb{R}^d} \psi(x) \quad \forall z \in \mathbb{R}^d,
\end{equation*}
and 
\begin{equation*}
     \psi(x) \leqslant \lambda \psi_0(x) + (1 - \lambda) f(x), \quad \forall x \in \mathbb{R}^d.
\end{equation*}
Then
\begin{equation*}
    f(z) - f^* \leqslant \lambda (\psi_0(x^*) - f^*), \quad \forall z \in \mathbb{R}^d. 
\end{equation*}
\end{lemma}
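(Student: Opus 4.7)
The statement is the classical estimate-sequence consequence, and the argument is a short chain of inequalities that chases the two hypotheses through the point $x = x^*$. The plan is to fix an arbitrary $z \in \mathbb{R}^d$ and simply combine the two given bounds at the unique minimizer $x^*$ of $f$.

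First, I would apply the first hypothesis to $z$ to write $f(z) \leqslant \min_{x \in \mathbb{R}^d} \psi(x)$. Since a minimum over $\mathbb{R}^d$ is bounded above by the value at any particular point, in particular at $x = x^*$, this gives $f(z) \leqslant \psi(x^*)$. Next, I would invoke the second hypothesis at $x = x^*$ to obtain $\psi(x^*) \leqslant \lambda \psi_0(x^*) + (1-\lambda) f(x^*) = \lambda \psi_0(x^*) + (1-\lambda) f^*$, using the definition $f^* = f(x^*)$.

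Chaining these two inequalities yields
\begin{equation*}
    f(z) \leqslant \lambda \psi_0(x^*) + (1-\lambda) f^*,
\end{equation*}
and subtracting $f^*$ from both sides gives
\begin{equation*}
    f(z) - f^* \leqslant \lambda \psi_0(x^*) + (1-\lambda) f^* - f^* = \lambda (\psi_0(x^*) - f^*),
\end{equation*}
which is the conclusion. Since $z$ was arbitrary, the bound holds for every $z \in \mathbb{R}^d$.

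There is no real obstacle here: the lemma is a one-line consequence of the hypotheses, and no structural information about $\psi_0, \psi \in \Psi$ (their quadratic shape, their parameters $c, \kappa, u$, or their convex combinations via Lemma~\ref{psi comb}) is needed for the statement itself — those ingredients are only used downstream when the lemma is applied to the iterates of Algorithm~\ref{alg:nesterov corrected}. The only subtle point is that the first hypothesis as written holds for \emph{all} $z$, which makes the conclusion uniform in $z$; in the typical estimate-sequence application one verifies it just for the current iterate, but that does not affect the proof.
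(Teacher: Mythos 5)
Your proof is correct and is exactly the standard estimate-sequence argument: evaluate the minimum of $\psi$ at $x^*$, plug $x=x^*$ into the second hypothesis, and subtract $f^*$. The paper itself does not reproduce a proof (it cites \cite{nesterov2018lectures} for this lemma), and the proof given there proceeds by the same one-line chain of inequalities, so there is nothing to add.
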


Now, let us proof the following lemma which provides a lower bound of the function $f$ from the set $\Psi$. 

\begin{lemma} \label{lower bound}
Let $f$ be a $2\mu$-strongly convex function and $\widetilde{\nabla}{f}$ satisfy  \eqref{eq:relative_error} for any $\alpha \in [0,1)$, and let $\rho = 1+ \alpha$. Then for any $z \in \mathbb{R}^d$, there is $\phi_{z} \in \Psi$, such that 
%If function $f$ is $2\mu$-strongly convex~\eqref{eq:str_cvx}, $\widetilde{\nabla}{f}$ satisfy~\eqref{eq:relative_error}, then $(\forall x^0 \in \mathbb{R}^d )\left( \exists \phi_{x^0} \in \PSI \right): \;\phi_{x^0}(x) \leqslant f(x)$. 
\begin{equation*}
    \phi_{z}(x) = f(z) - \frac{\rho^2 + \alpha^2}{2\mu} \|\nabla f(z) \|_2^2 + \frac{\mu}{2} \left\|x - z + \frac{1}{\mu} \widetilde{\nabla} f(z) \right \|_2^2 \leqslant f(x), \quad \forall x \in \mathbb{R}^d.
    \end{equation*}
\end{lemma}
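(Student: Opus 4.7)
The plan is to start from the defining inequality of $2\mu$-strong convexity and then massage the first-order term so that $\nabla f(z)$ is replaced by the inexact gradient $\widetilde\nabla f(z)$, losing only a controlled amount in the process. Concretely, I would first write, for any $x,z\in\mathbb{R}^d$,
\begin{equation*}
f(x) \;\geqslant\; f(z) + \langle \nabla f(z), x-z\rangle + \mu\,\|x-z\|_2^2,
\end{equation*}
which uses Assumption~\ref{as:Lsmooth+muConv} with the strong convexity constant $2\mu$. Then I would introduce the error $e(z):=\widetilde\nabla f(z)-\nabla f(z)$, so that $\nabla f(z)=\widetilde\nabla f(z)-e(z)$ and $\|e(z)\|_2\leqslant \alpha\|\nabla f(z)\|_2$ by Assumption~\ref{as:noise}.

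The second step is to handle the cross term $-\langle e(z),x-z\rangle$ via Young's inequality. The key is to pick the Young's parameter equal to $\mu$ (this choice is essentially forced, as explained below), which gives
\begin{equation*}
-\langle e(z),\,x-z\rangle \;\geqslant\; -\frac{\|e(z)\|_2^2}{2\mu} - \frac{\mu}{2}\|x-z\|_2^2 \;\geqslant\; -\frac{\alpha^2\|\nabla f(z)\|_2^2}{2\mu} - \frac{\mu}{2}\|x-z\|_2^2.
\end{equation*}
Substituting back into the strong convexity inequality, half of the quadratic term $\mu\|x-z\|_2^2$ is absorbed, leaving exactly $\tfrac{\mu}{2}\|x-z\|_2^2$ together with the linear term in $\widetilde\nabla f(z)$. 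This is precisely the right combination for the next step.

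The third step is to complete the square:
\begin{equation*}
\langle\widetilde\nabla f(z),x-z\rangle + \frac{\mu}{2}\|x-z\|_2^2 \;=\; \frac{\mu}{2}\Bigl\|x-z+\tfrac{1}{\mu}\widetilde\nabla f(z)\Bigr\|_2^2 - \frac{1}{2\mu}\|\widetilde\nabla f(z)\|_2^2.
\end{equation*}
Finally, I would invoke the upper half of \eqref{growth condition a}, $\|\widetilde\nabla f(z)\|_2\leqslant \rho\|\nabla f(z)\|_2$, to bound $-\tfrac{1}{2\mu}\|\widetilde\nabla f(z)\|_2^2\geqslant -\tfrac{\rho^2}{2\mu}\|\nabla f(z)\|_2^2$. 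Combining this with the residual $-\tfrac{\alpha^2}{2\mu}\|\nabla f(z)\|_2^2$ from Young's gives exactly the coefficient $(\rho^2+\alpha^2)/(2\mu)$ appearing in the claim, producing the desired inequality $\phi_z(x)\leqslant f(x)$.

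\textbf{Expected main obstacle.} There is little conceptual difficulty here; the proof is a short, tight calculation. The only thing to watch is that the splitting must be done with a Young's parameter that is compatible with both completing the square on the $\widetilde\nabla f(z)$-term (requiring $\tfrac{\mu}{2}\|x-z\|_2^2$ to survive) and with absorbing the full error $\|e(z)\|_2^2$ into the $\alpha^2\|\nabla f(z)\|_2^2$ term; that uniquely pins the Young parameter to $\mu$. The appearance of $\rho^2+\alpha^2$ (rather than only $\rho^2$) is essentially the signature of paying a Young's penalty on top of the magnitude bound $\|\widetilde\nabla f(z)\|_2\leqslant \rho\|\nabla f(z)\|_2$, and this is where the $\alpha^2$ contribution enters.
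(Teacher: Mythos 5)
Your proposal is correct and is essentially the paper's own argument run in the reverse direction: the paper expands $\phi_z(x)$, applies Young's inequality with parameter $\mu$ to the error term and the bound $\|\widetilde\nabla f(z)\|_2\leqslant\rho\|\nabla f(z)\|_2$, and then invokes $2\mu$-strong convexity, whereas you start from strong convexity, apply the same Young step and norm bound, and complete the square — the same three ingredients in the same roles. No gap; the only cosmetic omission is the (trivial) remark that $\phi_z\in\Psi$ with $c=f(z)-\tfrac{\rho^2+\alpha^2}{2\mu}\|\nabla f(z)\|_2^2$, $\kappa=\mu$, $u=z-\tfrac{1}{\mu}\widetilde\nabla f(z)$.
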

\begin{proof}
Let us set $c := f(z) - \frac{\rho^2 + \alpha^2}{2\mu}\|\nabla{f}(z)\|_2^2,  \kappa := \mu, u := z - \frac{1}{\mu} \widetilde{\nabla}f(z)$. Then
\begin{eqnarray*}
    \phi_{z}(x)
    % \lefteqn{\phi_{z}(x)}
    % \\
    & = & f(z) - \frac{(\rho^2 + \alpha^2)\|\nabla{f}(z)\|_2^2}{2\mu} + \frac{\mu}{2} \left\|x - z + \frac{1}{\mu} \widetilde{\nabla}f(z) \right\|_2^2
    \\
    & = & f(z) - \frac{(\rho^2 + \alpha^2)\|\nabla{f}(z)\|_2^2}{2\mu} + \frac{\mu}{2} \|x - z \|_2^2 + \langle \widetilde{\nabla}f(z), x - z \rangle + \frac{1}{2\mu} \| \widetilde{\nabla}f(z) \|_2^2  
    \\
    & \overset{(i)}{\leqslant}& f(z) - \frac{(\rho^2 + \alpha^2)\|\nabla{f}(z)\|_2^2}{2\mu} + \frac{\mu}{2} \|x - z \|_2^2 + \langle \nabla{f}(z), x - z \rangle + \frac{\mu}{2} \|x - z \|_2^2 
    \\
    && + \frac{\alpha^2}{2\mu} \|\nabla{f}(z) \|_2^2 + \frac{\rho^2}{2\mu} \| \nabla f(z) \|_2^2
    \\
    & \overset{(ii)}{\leqslant} & f(x),
\end{eqnarray*}
where $(i)$ follows from $ \langle \widetilde{\nabla}f(z), x - z \rangle = \langle \widetilde{\nabla}f(z) + \nabla f(z) - \nabla f(z), x -z \rangle $, Fenchel-Young inequality, inequality~\eqref{eq:relative_error} and~\cref{growth condition a}; $(ii)$ follows from the fact that $f$ is $2\mu$-strongly convex.
\end{proof}

Now, for Algorithm \ref{alg:nesterov corrected}, we have $u^0 = x_{\operatorname{start}}$ and $x^0 \in \mathbb{R}^d$ s.t. $u^0 = x^0$. Let $u^k$ be defined as in item 11 of Algorithm \ref{alg:nesterov corrected}, and $a_k \, (\forall k \geqslant 0)$ defined in \cref{alg:nesterov corrected}. Let us define, recursively, a pair of sequences $\{\psi^k(x)\}_{k \geqslant 0}, \{\lambda_k\}_{k \geqslant 0}$ corresponding to Algorithm \ref{alg:nesterov corrected} as follows
\begin{align}\label{function seq}
    &  \lambda_0 = 1, \quad {c_0 = f(x^0)}, \quad \lambda^{k+1} = (1-a_k) \lambda_k, \quad \forall k \geqslant0,
    \\& \psi^k(x| c^k, \mu, u^k) = c_k+ \frac{\mu}{2} \|x - u^k\|_2^2, \quad \forall k \geqslant 0.  \label{psi^k_Alg1}
\end{align}

Let $y^k\, \forall k \geqslant 0$ be generated by Algorithm \ref{alg:nesterov corrected} (see item 10 of this algorithm), and $\phi_{y^k}$ be the function that is obtained from Lemma \ref{lower bound}. We define, recursively
\begin{align} \label{psi seq def}
    \psi_{k + 1}(x | c_{k + 1}, \mu, u^{k + 1}) := (1 - a_k) \psi^k(x | c^k, \mu, u^k)+  a_k \phi_{y^k}(x), \quad \forall k \geqslant 0.
\end{align}
Using Lemma~\ref{psi comb} we obtain parameters of $\{ \psi^k \}$ that correspond to $ u^k$ in Algorithm~\ref{alg:nesterov corrected}.

\begin{lemma} \label{a_k correctness}
\moh{Let $\alpha \in [0,1), m = 1- 2\alpha, s = 1+2\alpha + 2\alpha^2$, and $a_k$ be the largest root of the quadratic equation
\begin{equation}\label{eq:quadratic}
    m a_k^2 + (s - m) a_k - q = 0,
\end{equation}
where $q = \frac{\mu}{\widehat{L}}$ and $\widehat{L} = \frac{1+ \alpha}{(1-\alpha)^3} L$. Then, $a_k <1 \, \forall k \geqslant 0$. In addition,  
\begin{enumerate}
    \item If $\alpha \leqslant \frac{\sqrt{2} - 1}{18} \sqrt{\frac{\mu}{L}}$, then $a_k \geqslant \frac{1}{10} \sqrt{\frac{\mu}{L}} \; \forall k \geqslant 0. $ 

    \item If $\alpha = \frac{1}{3} \left(\frac{\mu}{L} \right)^{\frac{1}{2} - \tau}$, then $a_k \geqslant \frac{1}{10} \left(\frac{\mu}{L} \right)^{\frac{1}{2} + \tau} \; \forall k \geqslant 0,$ where $0 \leqslant \tau \leqslant \frac{1}{2}$.
\end{enumerate}
}
%Let $a_k$ the largest root of the equation:
%\begin{equation*}
%\begin{gathered}
    %m a_k (1 - a_k) - s a_k = -q, \\
    %m a_k^2 + (s - m) a_k - q = 0. 
%\end{gathered}
%\end{equation*}
%Then $a_k < 1$.
\end{lemma}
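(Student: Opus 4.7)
The plan is first to observe that the coefficients of the quadratic \eqref{eq:quadratic} depend only on the fixed parameters $\alpha, L, \mu$, so the value $a_k$ produced at each step is in fact the same constant, which I denote simply by $a$. In every hypothesis of the lemma we have $\alpha < 1/2$ (since $(\sqrt{2}-1)/18 < 1/2$ and $1/3 < 1/2$), so $m = 1-2\alpha > 0$ and $f(t) := mt^2 + (s-m)t - q$ is a convex quadratic. Its discriminant $(s-m)^2 + 4mq$ is strictly positive, and since $s-m = 4\alpha + 2\alpha^2 > 0$ and $q > 0$, Vieta's formulas imply that one root of $f$ is negative while the other---namely $a$---is strictly positive.

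For the claim $a < 1$: because $f$ is convex and $a$ is its larger root, this is equivalent to $f(1) > 0$. A direct computation gives
\[
f(1) = m + (s-m) - q = s - q = (1 + 2\alpha + 2\alpha^2) - \frac{\mu(1-\alpha)^3}{L(1+\alpha)} \geq 1 - \frac{\mu}{L} > 0
\]
whenever $\mu < L$, which settles the claim.

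For each of the two lower bounds, I would use the analogous criterion: to prove $a \geq A_\ast$ it suffices to verify $f(A_\ast) \leq 0$, i.e., $m A_\ast^2 + (s-m) A_\ast \leq q$. Writing $\beta := \sqrt{\mu/L}$ and using $m \leq 1$, $s-m = 4\alpha + 2\alpha^2$, and the monotonicity of $t \mapsto (1-t)^3/(1+t)$ on $[0,1)$ (which yields $q \geq 2\beta^2/9$ whenever $\alpha \leq 1/3$), the verification reduces to elementary substitution. For case~1 (with $A_\ast = \beta/10$ and $\alpha \leq (\sqrt{2}-1)\beta/18$) the smallness of $\alpha$ makes $(s-m)A_\ast$ of order $\alpha\beta$ and hence dominated by $A_\ast^2 = \beta^2/100$; a quick estimate shows the left-hand side is at most about $0.025\,\beta^2$, while $q \geq 0.9\,\beta^2$, so $f(A_\ast) < 0$ follows. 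For case~2 (with $A_\ast = \beta^{1+2\tau}/10$ and $\alpha = \beta^{1-2\tau}/3$), the identities $4\alpha \cdot A_\ast = 2\beta^2/15$ and $2\alpha^2 A_\ast = \beta^{3-2\tau}/45$, together with $\beta^{2+4\tau}, \beta^{3-2\tau} \leq \beta^2$ for $\beta \leq 1$ and $\tau \in [0,1/2]$, yield
\[
m A_\ast^2 + (s-m) A_\ast \leq \frac{\beta^2}{100} + \frac{2\beta^2}{15} + \frac{\beta^2}{45} = \frac{149\,\beta^2}{900} < \frac{200\,\beta^2}{900} = \frac{2\beta^2}{9} \leq q,
\]
as required.

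The main obstacle I anticipate is merely tracking constants tightly enough to match the stated thresholds: the specific value $(\sqrt{2}-1)/18$ in case~1 suggests a sharper estimate than the crude one above (likely tuned to synchronize with the other constants in the convergence proof of Theorem~\ref{nesterov acc corrected convergence}), but the overall structure of the argument---convexity of $f$ together with evaluation at the test points $1$, $\beta/10$, and $\beta^{1+2\tau}/10$---is straightforward and uniform across the two cases.
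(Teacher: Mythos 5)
Your proof is correct, and it takes a genuinely different (and arguably cleaner) route than the paper. The paper works directly with the explicit root formula $a_k = \frac{(m-s)+\sqrt{(s-m)^2+4mq}}{2m}$: it proves $a_k<1$ by contradiction from $ma_k^2+(s-m)a_k-q\geq s-q>0$, and obtains the two lower bounds by estimating the square root, via $\sqrt{x^2+y^2}\geq \tfrac12(x+y)$ together with $\alpha\lesssim\sqrt{q}$ in case 1, and via the expansion $\sqrt{1+x}\geq 1+\tfrac{x}{2}-\tfrac{x^2}{8}$ in case 2. You instead exploit convexity of $t\mapsto mt^2+(s-m)t-q$ and the sign pattern of its roots (one negative, one positive, by Vieta), reducing everything to sign checks at the test points $1$, $\beta/10$ and $\beta^{1+2\tau}/10$ with $\beta=\sqrt{\mu/L}$: $f(1)=s-q>0$ gives $a<1$, and $f(A_*)\leq 0$ gives $a\geq A_*$. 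Your arithmetic checks out (in case 1 the left-hand side is at most roughly $0.02\beta^2$ against $q\geq 0.9\beta^2$; in case 2, $\tfrac{1}{100}+\tfrac{2}{15}+\tfrac{1}{45}=\tfrac{149}{900}<\tfrac29$ and $q\geq\tfrac29\beta^2$ for $\alpha\leq\tfrac13$), and this uniform test-point argument avoids both the root formula and the Taylor-type estimate, making the two cases structurally identical; what it gives up is only the explicit intermediate bound $a_k\gtrsim\sqrt{q}$ that the paper's computation makes visible. One shared caveat: like the paper's own proof (which asserts $0<m\leq 1$), your argument for $a_k<1$ needs $m=1-2\alpha>0$, i.e.\ $\alpha<\tfrac12$, which is narrower than the stated $\alpha\in[0,1)$ but covers every regime in which the lemma is actually invoked; you at least state this restriction explicitly.
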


\begin{proof}
\moh{By solving the equation \eqref{eq:quadratic} and choosing the largest root we get
\begin{equation} \label{eq:quadsol}
    a_k = \frac{(m - s) + \sqrt{(s - m)^2 +4mq }}{2 m}.
\end{equation}

Let us assume that $a_k \geqslant 1\; \forall k \geqslant 0$. We have $\mu < L,$ thus $q< 1$. Also, we have $\alpha \in [0,1),$ thus $s = 1+ 2\alpha + 2\alpha^2 \geqslant 1$, and $0 < m = 1- 2\alpha \leqslant 1$. Therefore, 
\[
    m a_k^2 + (s - m) a_k -q  \geqslant m + s - m - q = s - q \geqslant 1 - q > 0,
\]
which is a contradiction with \eqref{eq:quadratic}. Hence $a_k < 1\; \forall k \geqslant 0$. 

Now, let us assume that $\alpha \leqslant \frac{\sqrt{2} - 1}{18} \sqrt{\frac{\mu}{L}}$.  Since $\frac{\mu}{L} \leqslant 1,$ then $\alpha \leqslant \frac{\sqrt{2} - 1}{18},$ and
\begin{equation*}
    q = \frac{\mu (1 - \alpha)^3}{L (1 + \alpha)} \geqslant \frac{27}{128} \frac{\mu}{L} %\geqslant \frac{1}{9} \frac{\mu}{L}. 
    \quad \Longrightarrow \quad \sqrt{q} \geqslant  \frac{3\sqrt{3}}{8\sqrt{2}} \sqrt{\frac{\mu}{L}}. %\geqslant \frac{1}{3} \sqrt{\frac{\mu}{L}}.
\end{equation*}

Thus, $\alpha \leqslant \frac{\sqrt{2} - 1}{18} \sqrt{\frac{\mu}{L}} \leqslant \frac{\sqrt{2} - 1}{6} \sqrt{q}$, and 
\begin{equation*}
    \begin{gathered}
        m - s = -4\alpha - 2\alpha^2 \geqslant -6\alpha, \\
         m = 1 - 2\alpha \geqslant \frac{1}{2}.
    \end{gathered}
\end{equation*}

We have $m = 1- 2 \alpha \leqslant 1$. Thus, from \eqref{eq:quadsol}, we get  $a_k \geqslant \frac{m - s}{2} + \frac{1}{2}\sqrt{(s - m)^2 + 4mq }$.  Therefore, by using inequality $\sqrt{x^2 + y^2} \geqslant\frac{1}{2} (x + y)$, we obtain
\begin{align*}
    a_k \geqslant \frac{1}{2}\left( \frac{m - s}{2} + \sqrt{m} \sqrt{q} \right) \geqslant \frac{1}{2} \left( -3\alpha + \frac{1}{\sqrt 2} \sqrt{q} \right) \geqslant \frac{1}{2} \sqrt{q} \geqslant \frac{3\sqrt{3}}{16\sqrt{2}} \sqrt{\frac{\mu}{L}} \geqslant \frac{1}{10} \sqrt{ \frac{\mu}{L} }.
\end{align*}

Lastly, let $0 \leqslant \tau \leqslant \frac{1}{2}$ and $\alpha = \frac{1}{3} \left(\frac{\mu}{L}\right)^{\frac{1}{2} - \tau}$, then $0 < \alpha \leqslant \frac{1}{3}$. Thus,
\begin{align*}
    & s - m = 4 \alpha + 2\alpha^2 \qquad \Longrightarrow  \quad 4 \alpha \leqslant s - m < 6\alpha,
    \\& q = \frac{\mu}{\widehat{L}} = \frac{\mu}{L} \frac{(1 - \alpha)^3}{1 + \alpha} \quad \Longrightarrow  \quad \frac{2}{9} \frac{\mu}{L} \leqslant q \leqslant \frac{\mu}{L}.
    %\\& m =  1 - 2\alpha \leqslant 1.
\end{align*}

Let us prove by contradiction. To that end, we assume $a_k < \frac{1}{10} \left(\frac{\mu}{L}\right)^{\tau + \frac{1}{2}}$. Then:

\begin{align*}
m a_k^2 + (s - m) a_k - q 
&< \frac{m}{100} \left(\frac{\mu}{L}\right)^{2\tau + 1} + \frac{s - m}{10}\left(\frac{\mu}{L}\right)^{\tau + \frac{1}{2}} - q \\
&\leqslant \frac{m}{100} \left(\frac{\mu}{L}\right)^{2\tau + 1} + \frac{s - m}{10}\left(\frac{\mu}{L}\right)^{\tau + \frac{1}{2}} - \frac{2}{9} \frac{\mu}{L} \\ 
&\leqslant
\frac{1}{100} \left(\frac{\mu}{L}\right)^{2\tau + 1} + \frac{6}{30} \left(\frac{\mu}{L}\right)^{\frac{1}{2} -\tau}  \left(\frac{\mu}{L}\right)^{\tau + \frac{1}{2}} - \frac{2}{9} \frac{\mu}{L} \\
&= \frac{\mu}{L} \left( \frac{1}{100} \left(\frac{\mu}{L}\right)^{2\tau} + \frac{2}{10} - \frac{2}{9} \right) < 0
\end{align*}
Thus, we come to a contradiction because $a_k$ is a root of the equation $m a_k^2 + (s - m) a_k - q = 0$.

Therefore, from $\sqrt{1 + x} \geqslant 1 + \frac{x}{2} - \frac{x^2}{8} \; \forall x \geqslant 0$, we get the following 
\begin{align*}
    a_k &= \frac{s - m}{2m} \left(-1 + \sqrt{1 + \frac{4mq}{(s - m)^2}} \right) \geqslant \frac{s - m}{2m} \left( \frac{2mq}{(s - m)^2} - \frac{2m^2q^2}{(s - m)^4} \right) \\
    & \geqslant \frac{q}{s - m} \left(1 - \frac{q m}{(s - m)^2} \right) \geqslant
    \frac{2}{9} \frac{\mu}{L} \frac{1}{6\alpha} \left(1 - \frac{\mu}{L} \frac{1}{16 \alpha^2} \right) \\
    &\geqslant 
    \frac{2}{9} \frac{\mu}{L} \frac{3}{6} \left(\frac{\mu}{L}\right)^{\tau - \frac{1}{2}} \left(1 - \frac{9}{16} \frac{\mu}{L} \left(\frac{\mu}{L} \right)^{2\tau - 1} \right) = \frac{1}{9} \left(\frac{\mu}{L}\right)^{\tau + \frac{1}{2}} \left(1 - \frac{9}{16} \left(\frac{\mu}{L} \right)^{2\tau} \right) \\
    &\geqslant \frac{7}{48}  \left(\frac{\mu}{L}\right)^{\tau + \frac{1}{2}} \geqslant \frac{1}{10} \left(\frac{\mu}{L}\right)^{\tau + \frac{1}{2}} .
\end{align*}
}
\end{proof}

\begin{lemma} \label{gd step alpha}
\moh{Let $f$ be an $L$-smooth function, $\alpha \in [0,1)$ and $h = \left(\frac{1-\alpha}{1+\alpha}\right)^{3/2} \frac{1}{L}$. Then for 
 the gradient step
\begin{equation}\label{grad_step}
    y = x - h \widetilde{\nabla} f(x),
\end{equation}
it holds that
\begin{equation*}
    f(y) \leqslant f(x) - \frac{1}{2\widehat{L}} \|\nabla f(x) \|_2^2,
\end{equation*}
where $\widehat{L} = \frac{1 + \alpha}{(1 - \alpha)^3} L$. }
%Let $f$ function satisfies condition~\eqref{smoothness_cond}, and $\widetilde{\nabla}f$ satisfies~\eqref{eq:relative_error}. Then gradient step:
%\begin{equation*}
%    y = x - h \widetilde{\nabla} f(x),
%\end{equation*}
%with $h = \frac{\nu\gamma}{L \rho^2} = \left(\frac{1 - \alpha}{1 + \alpha} \right)^{\frac{3}{2}} \frac{1}{L}$ produce:
%\begin{equation*}
%    f(y) \leqslant f(x) - \frac{1}{2\widehat{L}} \|\nabla f(x) \|_2^2
%\end{equation*}
%Where: $\widehat{L} = \frac{1 + \alpha}{(1 - \alpha)^3} L$
\end{lemma}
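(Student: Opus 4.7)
The plan is to start from the standard descent lemma coming from $L$-smoothness applied to the step $y=x-h\widetilde{\nabla} f(x)$, which gives
\[
 f(y) \le f(x) - h\,\langle \nabla f(x), \widetilde{\nabla} f(x)\rangle + \frac{Lh^{2}}{2}\,\|\widetilde{\nabla} f(x)\|_{2}^{2}.
\]
The two terms on the right need to be bounded by quantities involving only $\|\nabla f(x)\|_{2}^{2}$. For the quadratic term I would just use the already-established inequality $\|\widetilde{\nabla} f(x)\|_{2}\le(1+\alpha)\|\nabla f(x)\|_{2}$ from \eqref{growth condition a}. For the inner product, I would write $\langle \nabla f(x), \widetilde{\nabla} f(x)\rangle = \|\nabla f(x)\|_{2}^{2}+\langle \nabla f(x), \widetilde{\nabla} f(x)-\nabla f(x)\rangle$ and apply Cauchy--Schwarz together with Assumption~\ref{as:noise} to get $\langle \nabla f(x), \widetilde{\nabla} f(x)\rangle \ge (1-\alpha)\|\nabla f(x)\|_{2}^{2}$.

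Combining these two bounds yields
\[
 f(y) \le f(x) - \Bigl[(1-\alpha)h - \tfrac{L(1+\alpha)^{2}}{2}h^{2}\Bigr]\|\nabla f(x)\|_{2}^{2}.
\]
Then I would simply plug in $h=\bigl(\tfrac{1-\alpha}{1+\alpha}\bigr)^{3/2}\tfrac{1}{L}$ and simplify: the linear term becomes $(1-\alpha)^{5/2}/\bigl((1+\alpha)^{3/2}L\bigr)$ and the quadratic term becomes $(1-\alpha)^{3}/\bigl(2(1+\alpha)L\bigr)$. Factoring out a common factor yields a coefficient proportional to $2-\sqrt{1-\alpha^{2}}$.

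The last step is to reconcile this coefficient with the target $1/(2\widehat{L})=(1-\alpha)^{3}/\bigl(2(1+\alpha)L\bigr)$. After cancelling the shared factor, the desired inequality reduces to $2-\sqrt{1-\alpha^{2}}\ge \sqrt{1-\alpha^{2}}$, i.e.\ $\sqrt{1-\alpha^{2}}\le 1$, which holds trivially for every $\alpha\in[0,1)$. This shows the bracketed coefficient is at least $1/(2\widehat{L})$, yielding the claim.

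The only mildly delicate step is matching constants: the step size $h$ has been tuned precisely so that the two competing terms combine into the clean expression $1/(2\widehat{L})$ with equality (up to the slack $2-\sqrt{1-\alpha^{2}}\ge\sqrt{1-\alpha^{2}}$). There are no further obstacles; the argument is otherwise a direct computation from the descent lemma.
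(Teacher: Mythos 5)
Your proof is correct and follows essentially the same route as the paper: apply the descent lemma to the step $y=x-h\widetilde{\nabla} f(x)$, bound $\|\widetilde{\nabla} f(x)\|_2\leqslant(1+\alpha)\|\nabla f(x)\|_2$, lower-bound the inner product, and substitute the prescribed $h$. The only (harmless) difference is that you bound $\langle\nabla f(x),\widetilde{\nabla} f(x)\rangle\geqslant(1-\alpha)\|\nabla f(x)\|_2^2$ directly via Cauchy--Schwarz, which is slightly tighter than the paper's use of \eqref{growth condition b} together with \eqref{growth condition a} (giving the factor $(1-\alpha)\sqrt{1-\alpha^2}$), so your computation ends with the slack $2-\sqrt{1-\alpha^{2}}\geqslant\sqrt{1-\alpha^{2}}$ where the paper gets the bound $\frac{1}{2\widehat{L}}$ with equality.
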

\begin{proof}
\moh{
Since $f$ is $L$-smooth, and by \eqref{grad_step}, for any $x,y \in \mathbb{R}^d$ we can write 
\begin{align*}
    f(y) & \leqslant f(x) + \langle \nabla{f}(x), y - x \rangle + \frac{L}{2} \|y - x \|_2^2
    \\& \leqslant f(x) + \langle \nabla{f}(x), -h\widetilde{\nabla}f(x) \rangle + \frac{h^2L}{2} \| \widetilde{\nabla} f(x) \|_2^2. 
\end{align*}
Let $\rho = 1 + \alpha, \nu = 1 - \alpha$ and $\gamma = \sqrt{1-\alpha^2}$. Then, we find $h = \frac{\nu \gamma}{L \rho^2}$, and from~\cref{growth condition a} we get
\begin{align*}
    f(y) & \leqslant f(x) - h \nu \gamma \| \nabla{f}(x) \|_2^2 + \frac{h^2L \rho^2}{2} \| \nabla{f}(x) \|_2^2
    \\& = f(x) - \frac{\nu^2 \gamma^2}{2L\rho^2} \| \nabla{f}(x) \|_2^2 
    \\& = f(x) - \frac{(1-\alpha)^3}{2L(1+\alpha)}\| \nabla{f}(x) \|_2^2
    \\&= f(x) - \frac{1}{2\widehat{L}} \| \nabla{f}(x) \|_2^2.
\end{align*}
}
%\begin{equation*}
    %\begin{gathered}
        %f(y) \leqslant f(x) + \langle \nabla{f}(x), y - x \rangle + \frac{L}{2} \|y - x \|_2^2, \\
        %f(y) \leqslant f(x) + \langle \nabla{f}(x), -h\widetilde{\nabla}f(x) \rangle + \frac{h^2L}{2} \| \widetilde{\nabla} f(x) \|_2^2, \\
        %f(y) \leqslant f(x) - h \nu \gamma \| \nabla{f}(x) \|_2^2 + \frac{h^2L \rho^2}{2} \| \nabla{f}(x) \|_2^2, \\
        %f(y) \leqslant f(x) - \frac{\nu^2 \gamma^2}{2L\rho^2} \| \nabla{f}(x) \|_2^2, \\
        %f(y) \leqslant f(x) - \frac{1}{2\widehat{L}} \| \nabla{f}(x) \|_2^2.
    %\end{gathered}
%\end{equation*}
\end{proof}

%%%%%%%%%%%%% Lemma A.5 %%%%%%%%%%%%%%%%%%
\begin{lemma} \label{c > f lemma}
\moh{Let $f$ be an $L$-smooth and $\mu$-strongly convex function, $\{x^k\}_{k \geqslant 0}$ be a sequence of points generated by Algorithm \ref{alg:nesterov corrected}, and $c_k\, \forall k \geqslant 0$ be a minimal value of the function $\psi^k$ corresponding to Algorithm \ref{alg:nesterov corrected}. Then $c_k\geqslant f(x^k) \, \forall k \geqslant 0$. 
}
%Let $c^k$ - minimum value of $\psi^k$, corresponding to Algorithm~\ref{alg:nesterov corrected}, $\lbrace x^k \rbrace$ - generated sequence, then $c_k\geqslant f(x^k)$.
\end{lemma}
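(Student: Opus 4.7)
The plan is to prove this by induction on $k$, using the recursive definition of $\psi^k$ together with the update rules of Algorithm~\ref{alg:nesterov corrected} and the carefully chosen quadratic equation defining $a_k$.

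\textbf{Base case.} For $k=0$, since $u^0 = x^0$, the quadratic $\psi^0(x) = c^0 + \frac{\mu}{2}\|x-u^0\|_2^2$ attains its minimum $c^0$ at $u^0 = x^0$, and by the initialization $c^0 = f(x^0)$, so the inequality holds with equality.

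\textbf{Inductive step.} Assume $c^k \geqslant f(x^k)$. Applying Lemma~\ref{psi comb} to the recursion $\psi^{k+1} = (1-a_k)\psi^k + a_k \phi_{y^k}$ (with both quadratics having curvature $\mu$) yields an explicit expression
\begin{equation*}
    c^{k+1} = (1-a_k) c^k + a_k\!\left(f(y^k) - \tfrac{\rho^2+\alpha^2}{2\mu}\|\nabla f(y^k)\|_2^2\right) + \tfrac{(1-a_k)a_k\mu}{2}\!\left\|u^k - y^k + \tfrac{1}{\mu}\widetilde{\nabla} f(y^k)\right\|_2^2.
\end{equation*}
I would use the inductive hypothesis $c^k \geqslant f(x^k)$ together with the convexity inequality $f(x^k) \geqslant f(y^k) + \langle \nabla f(y^k), x^k - y^k\rangle$ to replace $c^k$ by $f(y^k) + \langle \nabla f(y^k), x^k - y^k\rangle$. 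The key algebraic identity, obtained directly from step 10 of the algorithm, is $x^k - y^k = a_k(y^k - u^k)$, which lets us rewrite the cross term so that both the linear term from the expansion of the square and the convexity term involve $y^k - u^k$. At this point, the goal is to show $c^{k+1} \geqslant f(x^{k+1})$, and by Lemma~\ref{gd step alpha} it suffices to show $c^{k+1} \geqslant f(y^k) - \frac{1}{2\widehat L}\|\nabla f(y^k)\|_2^2$.

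\textbf{Closing the inequality.} After canceling $f(y^k)$, the remaining inequality has the structure
\begin{equation*}
    \tfrac{(1-a_k)a_k\mu}{2}\|y^k-u^k\|_2^2 + (1-a_k)a_k\langle \widetilde{\nabla} f(y^k) - \nabla f(y^k),\, u^k - y^k\rangle + \tfrac{(1-a_k)a_k}{2\mu}\|\widetilde{\nabla} f(y^k)\|_2^2 - \tfrac{a_k(\rho^2+\alpha^2)}{2\mu}\|\nabla f(y^k)\|_2^2 + \tfrac{1}{2\widehat L}\|\nabla f(y^k)\|_2^2 \geqslant 0.
\end{equation*}
The troublesome cross term I would bound by Cauchy--Schwarz combined with Assumption~\ref{as:noise}, which gives $\|\widetilde{\nabla} f(y^k) - \nabla f(y^k)\|_2 \leqslant \alpha\|\nabla f(y^k)\|_2$, followed by Young's inequality to split it into a multiple of $\mu\|y^k-u^k\|_2^2$ and a multiple of $\|\nabla f(y^k)\|_2^2/\mu$. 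After using $\|\widetilde{\nabla} f(y^k)\|_2^2 \leqslant \rho^2\|\nabla f(y^k)\|_2^2$ from \eqref{growth condition a}, the $\|y^k-u^k\|_2^2$ terms can be discarded (they are nonnegative for an appropriate Young split), leaving a condition of the form $(1-a_k)a_k \cdot s - a_k \cdot (\text{constant}) + q \geqslant m a_k^2$ up to a common factor of $\frac{1}{2\mu}\|\nabla f(y^k)\|_2^2$, where $s = 1+2\alpha+2\alpha^2$, $m = 1-2\alpha$, and $q = \mu/\widehat L$ are exactly the constants appearing in the quadratic equation \eqref{eq:quadratic} defining $a_k$.

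\textbf{Main obstacle.} The hard part is the bookkeeping in the final step: one must choose the Young split of the noise cross term so that the coefficient of $\|y^k-u^k\|_2^2$ stays nonnegative while the coefficient of $\|\nabla f(y^k)\|_2^2$ combines with $\rho^2+\alpha^2$ to produce exactly the constant $s = 1+2\alpha+2\alpha^2$. The defining equation $m a_k^2 + (s-m)a_k - q = 0$ is then precisely the equality case of the resulting inequality, so the inductive step closes. This explains why the algorithm uses these particular constants and why $a_k$ is defined as the largest root of that specific quadratic.
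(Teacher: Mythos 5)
Your overall route is exactly the paper's: induction, the explicit formula for $c^{k+1}$ via Lemma~\ref{psi comb}, the inductive hypothesis combined with convexity $c^k \geqslant f(y^k) + \langle \nabla f(y^k), x^k-y^k\rangle$, the identity $x^k - y^k = a_k(y^k-u^k)$ from the definition of $y^k$ which kills the exact-gradient cross terms, the reduction to showing $c^{k+1} \geqslant f(y^k) - \frac{1}{2\widehat L}\|\nabla f(y^k)\|_2^2$, Lemma~\ref{gd step alpha}, and the defining quadratic of $a_k$ to close. Your displayed residual inequality is also correct.

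However, the step you yourself flag as the hard part contains a genuine directional error. The term $\frac{(1-a_k)a_k}{2\mu}\|\widetilde{\nabla} f(y^k)\|_2^2$ enters the residual with a \emph{positive} sign in a quantity you must bound \emph{below} by zero, so you need the lower bound $\|\widetilde{\nabla} f(y^k)\|_2^2 \geqslant \nu^2\|\nabla f(y^k)\|_2^2 = (1-\alpha)^2\|\nabla f(y^k)\|_2^2$ from \eqref{growth condition a}; invoking the upper bound $\|\widetilde{\nabla} f(y^k)\|_2^2 \leqslant \rho^2\|\nabla f(y^k)\|_2^2$ there goes the wrong way and yields nothing. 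Relatedly, you misattribute where the constants come from: $s=\rho^2+\alpha^2=1+2\alpha+2\alpha^2$ is not produced by the Young split — it is already present in the term $-\frac{a_k(\rho^2+\alpha^2)}{2\mu}\|\nabla f(y^k)\|_2^2$ inherited from $\phi_{y^k}$ in Lemma~\ref{lower bound}. What the noise handling must produce is $m=\nu^2-\alpha^2=1-2\alpha$ as the coefficient of $\frac{a_k(1-a_k)}{2\mu}\|\nabla f(y^k)\|_2^2$: with the balanced Young split $\langle \widetilde{\nabla} f-\nabla f, u^k-y^k\rangle \geqslant -\frac{\alpha^2}{2\mu}\|\nabla f\|_2^2 - \frac{\mu}{2}\|u^k-y^k\|_2^2$, the $\|y^k-u^k\|_2^2$ terms cancel exactly (they are not merely discarded), and the condition to verify is $m\,a_k(1-a_k) - s\,a_k + q \geqslant 0$, which holds with equality since $a_k$ solves $m a_k^2 + (s-m)a_k - q = 0$. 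Your stated final form $(1-a_k)a_k\,s - a_k\cdot(\text{constant}) + q \geqslant m a_k^2$ does not match this and would not be delivered by the bounds you invoke; with the direction of the bound on $\|\widetilde{\nabla} f\|_2^2$ corrected, the argument closes exactly as in the paper.
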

\begin{proof}
We will use the principle of mathematical induction to prove the statement of this lemma. For $k = 0$, it is obvious that $c_0 \geqslant f(x^0)$. Now, let us assume that $c_k\geqslant f(x^k)$, and prove the statement for $k+1$. For this, from Lemma \ref{psi comb}, \eqref{psi seq def} and definition of $\phi_{y^k}$ from Lemma~\ref{lower bound} we have 
\begin{equation}\label{eq:ck1}
    c_{k + 1} = (1 - a_k) c_k+ a_k \left(f(y^k) - \frac{\rho^2 + \alpha^2}{2\mu} \| \nabla f(y^k) \|_2^2 \right)+ \frac{a_k (1 - a_k) \mu}{2} \left \|y^k - u^k - \frac{1}{\mu} \widetilde{\nabla} f(y^k) \right \|_2^2.
\end{equation}

%\artem{Answer: third term obtained from Lemma~\ref{psi comb}, \eqref{psi seq def} and definition of $\phi_{y^k}$ from Lemma~\ref{lower bound}. $c_k\not= f(x^k)$. We define $\psi^k$ recursively at ~\eqref{psi seq def} it defines $c^{k}$ and $u^k$ using Lemma~\ref{psi comb}.}

By using the Fenchel inequality and \eqref{eq:relative_error}, we find 
\begin{align}
    \langle \widetilde{\nabla}f(y^k) - \nabla f(y^k), u^k - y^k \rangle & \geqslant -\frac{1}{2\mu} \|\widetilde{\nabla}f(y^k) - \nabla f(y^k)\|_2^2 - \frac{\mu}{2} \|u^k - y^k \|_2^2 \nonumber
    \\& \geqslant -\frac{\alpha^2}{2\mu} \|\nabla f(y^k)\|_2^2 - \frac{\mu}{2} \|u^k - y^k \|_2^2. \label{eq:412}
\end{align}

We have
\begin{align}
    & \qquad \;\;\; \mu \left\|y^k - u^k - \frac{1}{\mu} \widetilde{\nabla} f(y^k) \right\|_2^2 \nonumber
    \\& \;\;\;=\;\;\; \mu \|y^k - u^k 
    \|_2^2 + 2 \langle \widetilde{\nabla} f(y^k), u^k - y^k \rangle + \frac{1}{\mu} \|\widetilde{\nabla} f(y^k)\|_2^2 \nonumber
    \\& \;\;\;=\;\;\; \mu \|y^k - u^k 
    \|_2^2 + 2 \langle \nabla f(y^k), u^k - y^k \rangle +  2 \langle \widetilde{\nabla}f(y^k) - \nabla f(y^k), u^k - y^k \rangle + \frac{1}{\mu} \|\widetilde{\nabla} f(y^k)\|_2^2 \nonumber
    \\& \overset{\eqref{eq:412},\, \eqref{growth condition a}}{\geqslant} \mu \|y^k - u^k \|_2^2 + 2 \langle \nabla f(y^k), u^k - y^k \rangle - \frac{\alpha^2}{\mu} \|\nabla f(y^k)\|_2^2 - \mu \|u^k - y^k\|_2^2 + \frac{\nu^2}{\mu} \|\nabla f(y^k) \|_2^2 \nonumber
    \\& \;\;\;=\;\;\; 2 \langle \nabla f(y^k), u^k - y^k \rangle  + \frac{\nu^2 - \alpha^2}{\mu} \| \nabla f(y^k) \|_2^2. \label{eq:000}
\end{align}

By using the convexity of $f$, we get $f(x^k) \geqslant f(y^k) + \langle \nabla f(y^k), x^k - y^k \rangle$, and since $c_k\geqslant f(x^k)$, then we find
\begin{equation}\label{eq:1m2m}
    c_k\geqslant f(y^k) + \langle \nabla f(y^k), x^k - y^k \rangle. 
\end{equation}
We have $\rho = 1+ \alpha$, thus $s = 1 + 2 \alpha + 2 \alpha^2 = \rho^2 + \alpha^2$, and $\nu = 1 - \alpha$, thus $m = 1 - 2 \alpha = \nu^2 - \alpha^2$. Therefore, from \eqref{eq:ck1}, we find
\begin{align*}
    c_{k + 1} & \quad = \quad (1 - a_k) c_k+ a_k f(y^k)  - \frac{s a_k}{2\mu} \| \nabla f(y^k) \|_2^2 + \frac{a_k (1 - a_k) \mu}{2} \left \|y^k - u^k - \frac{1}{\mu} \widetilde{\nabla} f(y^k) \right \|_2^2
    \\& \overset{\eqref{eq:000},\, \eqref{eq:1m2m}}{\geqslant} (1-a_k) \left(f(y^k) + \langle \nabla f(y^k), x^k - y^k \rangle\right) + a_k f(y^k) - \frac{s a_k }{2\mu} \| \nabla f(y^k) \|_2^2
    \\
    & \qquad \quad  + \frac{a_k(1-a_k)}{2}\left(2 \langle \nabla f(y^k), u^k - y^k \rangle  + \frac{m}{\mu} \| \nabla f(y^k) \|_2^2\right)
    \\
    & \quad \geqslant \quad f(y^k) - \frac{s a_k}{2\mu} \|\nabla f(y^k) \|_2^2 +  \langle \nabla f(y^k), (1 - a_k) (x^k - y^k ) + a_k (1 - a_k) (u^k - y^k) 
    \rangle 
    \\
    & \qquad \quad +  \frac{a_k (1 - a_k) m}{2\mu} \|\nabla f(y^k) \|_2^2.
\end{align*}

But $y^k = \frac{a_k u^k + x^k}{1 + a_k}$ (see item 10 in Algorithm \ref{alg:nesterov corrected}). Thus,~$\left\langle \nabla f(y^k), (1 - a_k) \left(x^k - y^k \right) + a_k (1 - a_k) \left(u^k - y^k\right) \right\rangle = 0$. Therefore, we have
\begin{align*}
    c_{k + 1} & \geqslant f(y^k) - \frac{1}{2\mu} \left( s a_k - m a_k (1 - a_k) \right) \|\nabla f(y^k) \|_2^2 
    \\& =  f(y^k) - \frac{1}{2\mu} \left( m a_k^2 + (s-m) a_k \right) \|\nabla f(y^k) \|_2^2.
\end{align*}

But $a_k$ is a solution of the equation $ma_k^2 + (s-m)a_k - q = 0,$ with $q = \mu / \widehat{L}$ (see items 7 and 9 in Algorithm \ref{alg:nesterov corrected}). Thus, $ma_k^2 + (s-m)a_k = \frac{\mu}{\widehat L}$, and 
\begin{equation}\label{eq:hj00}
    c^{k+1} \geqslant f(y^k) - \frac{1}{2 \widehat{L}} \|\nabla f(y^k)\|_2^2.
\end{equation}
From items 2 and 10 in Algorithm \ref{alg:nesterov corrected}, we have
\begin{equation*}
    x^{k + 1} = y^k - h \widetilde{\nabla} f(y^k), \; h = \left(\frac{1 - \alpha}{1 + \alpha} \right)^{\frac{3}{2}} \frac{1}{L}.
\end{equation*}
Therefore, by using Lemma~\ref{gd step alpha} for $x^{k + 1}$ and $y^k$, from \eqref{eq:hj00}, we obtain
\begin{equation*}
    c_{k + 1} \geqslant  f(y^k) - \frac{1}{2 \widehat{L}}\|\nabla f(y^k) \|_2^2 \geqslant f(x^{k + 1}).
\end{equation*}
Which is the desired result, and finish the proving the induction step.

\end{proof}

\begin{lemma} \label{upper bound for lemma}
\moh{Let $\{\lambda_k\}_{k \geqslant 0}$ be a sequence defined in \eqref{function seq}, $\{x^k\}_{k \geqslant 0}$ be a sequence of points generated by Algorithm \ref{alg:nesterov corrected}, and $\{\psi^k\}_{k \geqslant 0}$ be the corresponding functions. Then 
\begin{equation*}
    \psi^k(x) \leqslant \lambda_k \psi_0(x) + (1 - \lambda_k) f(x), \quad \forall k \geqslant 0 \;\; \text{and} \;\; \forall x \in \mathbb{R}^d.
\end{equation*}
}
%Let $\lbrace \lambda_k \rbrace$ sequence defined at~\eqref{function seq}, $\lbrace x^k \rbrace$ - generated by Algorithm~\ref{alg:nesterov corrected}, $\lbrace \psi^k \rbrace$ - sequence of corresponding functions, then:
%\begin{equation*}
%    \psi^k(x) \leqslant \lambda_k \psi_0(x) + (1 - \lambda_k) f(x).
%\end{equation*}
\end{lemma}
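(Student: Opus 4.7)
The plan is to proceed by straightforward induction on $k$, using the recursive definitions \eqref{function seq}--\eqref{psi seq def} of $\psi^k$ and $\lambda_k$, together with the lower bound $\phi_{y^k}(x) \leqslant f(x)$ that was established in Lemma~\ref{lower bound}.

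\textbf{Base case.} For $k = 0$ we have $\lambda_0 = 1$, so the claimed inequality reduces to $\psi^0(x) \leqslant \psi_0(x)$, which holds with equality by definition.

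\textbf{Inductive step.} Assume the claim holds at step $k$. By the recursive definition \eqref{psi seq def},
\begin{equation*}
\psi^{k+1}(x) = (1 - a_k)\,\psi^k(x) + a_k\, \phi_{y^k}(x).
\end{equation*}
Lemma~\ref{a_k correctness} guarantees $a_k \in (0,1)$ so that $1 - a_k > 0$ and the combination is convex; by Lemma~\ref{lower bound}, $\phi_{y^k}(x) \leqslant f(x)$. Applying the inductive hypothesis to the first term and the bound on $\phi_{y^k}$ to the second yields
\begin{equation*}
\psi^{k+1}(x) \leqslant (1 - a_k)\bigl[\lambda_k \psi_0(x) + (1 - \lambda_k) f(x)\bigr] + a_k f(x).
\end{equation*}
Grouping terms and using $\lambda_{k+1} = (1 - a_k)\lambda_k$ gives
\begin{equation*}
\psi^{k+1}(x) \leqslant \lambda_{k+1} \psi_0(x) + \bigl[(1-a_k)(1-\lambda_k) + a_k\bigr] f(x) = \lambda_{k+1} \psi_0(x) + (1 - \lambda_{k+1}) f(x),
\end{equation*}
where the final equality is the elementary algebraic identity $(1-a_k)(1-\lambda_k) + a_k = 1 - (1-a_k)\lambda_k = 1 - \lambda_{k+1}$.

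\textbf{Main obstacle.} There is no real obstacle here: the statement is a standard estimating-sequence inequality of the type used in Nesterov's framework, and all the heavy lifting has already been done in the preceding lemmas. The only points that need to be acknowledged are that $a_k \in (0,1)$ (so the convex combination interpretation is valid) and that $\phi_{y^k}$ is a uniform lower bound for $f$; both are provided by Lemmas~\ref{a_k correctness} and~\ref{lower bound} respectively.
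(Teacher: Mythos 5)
Your proof is correct and follows essentially the same route as the paper: induction on $k$, using the recursion \eqref{psi seq def}, the bound $\phi_{y^k}(x)\leqslant f(x)$ from Lemma~\ref{lower bound}, and the identity $\lambda_{k+1}=(1-a_k)\lambda_k$. Your additional remark that $a_k\in(0,1)$ (needed so that the coefficients are nonnegative and the inequalities are preserved) is a point the paper leaves implicit, but it is the same argument.
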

\begin{proof}
\moh{ By the principle of induction. For $k = 0$, the inequality is obvious. Assume that inequality takes place for $k$. From \eqref{psi seq def}, and Lemma \ref{lower bound}, we get 
\begin{align*}
    \psi_{k + 1}(x)  &= (1 - a_k) \psi^k(x) + a_k \phi_{y^k}(x) 
    \\& \leqslant (1 - a_k) \left( (1 - \lambda_k) f(x) + \lambda_k \psi_0(x) \right) + a_k f(x) \\
    & = \lambda_{k + 1} \psi_0(x) + (1 - \lambda_{k + 1}) f(x).
\end{align*}
}
%We will prove by induction. Base $k = 0$ is obvious. Assume that inequality takes place for $k$, then:
%\begin{align*}
    %\psi_{k + 1}(x)  &= (1 - a_k) \psi^k(x) + a_k \phi_{y^k}(x) 
    %\\& \leqslant (1 - a_k) \left( (1 - \lambda_k) f(x) + \lambda_k \psi_0(x) \right) + a_k f(x) \\
    %& = \lambda_{k + 1} \psi_0(x) + (1 - \lambda_{k + 1}) f(x).
%\end{align*}
\end{proof}

\textbf{\cref{nesterov acc corrected convergence}}.
\textit{
Let Assumption~\ref{as:Lsmooth+muConv} hold.
% $L$-smooth~\eqref{smoothness_cond} and $\mu$ strongly convex~\eqref{as:str_cvx}, $\widetilde{\nabla}f$ satisfies~\eqref{eq:relative_error}, 
If Assumption~\ref{as:noise} holds with $\alpha < \frac{\sqrt{2} - 1}{18 \sqrt{2}} \sqrt{\frac{\mu}{L}}$, then Algorithm~\ref{alg:nesterov corrected} with parameters ($L, \frac{\mu}{2}, x^0, \alpha)$ generates $x^N$, s.t.
\begin{equation*}
    f(x^N) - f^* \leqslant L R^2 \left( 1 - \frac{1}{10\sqrt{2}} \sqrt{\frac{\mu}{L}} \right)^N,
\end{equation*}
where $R := \|x^0 - x^* \|_2$.
}
\begin{proof}[Proof of Theorem~\ref{nesterov acc corrected convergence}]
From Lemma~\ref{c > f lemma}, we have $\forall k \geqslant 0: f(x^k) \leqslant \min \psi^k$. By Lemma~\ref{upper bound for lemma} we can use Lemma~\ref{conv lemma}, then
\begin{align*}
    f(x^N) - f^* & \leqslant \lambda_N \left(\psi_0(x^*) - f^* \right) 
    \\& = \lambda_N \left(f(x^0) - f^* + \frac{\mu}{2} \|x^0 - x^*\|_2^2 \right)
    \\& \leqslant \lambda_N \left( \frac{L}{2} R^2 + \frac{\mu}{2} R^2 \right) 
    \\& \leqslant \lambda_N L R^2.
\end{align*}
From Lemma~\ref{a_k correctness} \vg{ we obtain $a_k \geqslant \frac{1}{10\sqrt{2}} \sqrt{\frac{\mu}{L}}$ (we use algorithm with $\mu = \frac{\mu}{2}$) and definition of $\lambda_k$ we get }
\begin{equation*}
    f(x^N) - f^* \leqslant \left(1 -  \frac{1}{10\sqrt{2}} \sqrt{\frac{\mu}{L}} \right)^N L R^2.
\end{equation*}

\end{proof}

\textbf{Theorem~\ref{nesterov acc corrected convergence tau}}.
\textit{
Let Assumption~\ref{as:Lsmooth+muConv} hold.
% $L$-smooth~\eqref{smoothness_cond} and $\mu$ strongly convex~\eqref{as:str_cvx}, $\widetilde{\nabla}f$ satisfies~\eqref{eq:relative_error}, 
If Assumption~\ref{as:noise} holds with $\alpha = \frac{1}{3}\left(\frac{\mu}{L}\right)^{\frac{1}{2} - \tau}$, where $0 \leqslant \tau \leqslant \frac{1}{2}$, then Algorithm~\ref{alg:nesterov corrected} with parameters ($L, \frac{\mu}{2}, x^0, \alpha)$ generates $x^N$, s.t. 
\begin{equation*}
    f(x^N) - f^* \leqslant L R^2 \left( 1 - \frac{1}{10\sqrt{2}} \left(\frac{\mu}{L}\right)^{\frac{1}{2} + \tau} \right)^N,
    \end{equation*}
where $R := \|x^0 - x^* \|_2$.
}

\begin{proof}[Proof of Theorem~\ref{nesterov acc corrected convergence tau}]
Same steps as in proof Theorem~\ref{nesterov acc corrected convergence}, but using 
Lemma~\ref{a_k correctness} with $a_k \geqslant \frac{1}{10\sqrt{2}} \left(\frac{\mu}{L}\right)^{\frac{1}{2} + \tau}$.
\end{proof}

\section[Missing Proofs for Section~\ref{section min max}]%
        {Missing Proofs for~\texorpdfstring{\cref{section min max}}{Section~\ref{section min max}}}
\label{appendix proofs min max}

% \subsection{Proof of~\cref{corollary:upper_bound_scsc_smooth}}
\subsection[Proof of Corollary~\ref{corollary:upper_bound_scsc_smooth}]%
            {Proof of~\texorpdfstring{\cref{corollary:upper_bound_scsc_smooth}}{Corollary~\ref{corollary:upper_bound_scsc_smooth}}}

In the following Lemma~\ref{lem:contraction_scsc_smooth} we consider contraction in terms of Lyapunov function $\frac{1}{\eta_x}\|x^{k} - x^{*}\|_2^2 + \frac{1}{\eta_y}\|y^{k} - y^{*}\|_2^2$ that was successfully used for analysis of Sim-GDA in exact gradient setting \cite{lee2024fundamentalbenefitalternatingupdates}. 

%contraction lemma for scsc smooth saddle
\begin{lemma} \label{lem:contraction_scsc_smooth}
Let Assumptions \ref{as:conv-conc-inexact} and \ref{as:long} hold. Then, the sequence $(x^k,y^k)$ generated by Inexact Sim-GDA Algorithm \ref{alg:sim} satisfies
% Consider $f \in S_{\mu_x\mu_yL_xL_yL_{xy}}$ (\cref{as:long}). For inexact Sim-GDA the following inequality holds
\begin{eqnarray*}
    \lefteqn{\frac{1}{\eta_x}\|x^{k+1} - x^{*}\|_2^2 + \frac{1}{\eta_y}\|y^{k+1} - y^{*}\|_2^2}
    \\
    & \leqslant & \left(\frac{1}{\eta_x} + \frac{\alpha}{\max \{\eta_x,\eta_y\}} - \mu_x + 2\max \{\eta_x,\eta_y\}[(1+\alpha)^2 + \alpha]L_{xy}^2 \right)\|x^{k} - x^{*}\|_2^2  
    \\
    && + \left(\frac{1}{\eta_y} + \frac{\alpha}{\max \{\eta_x,\eta_y\}} - \mu_y + 2\max \{\eta_x,\eta_y\}[(1+\alpha)^2 + \alpha]L_{xy}^2 \right)\|y^{k} - y^{*}\|_2^2,
\end{eqnarray*}
whenever    
\begin{align*}
    &\max \{\eta_x,\eta_y\} \leqslant \frac{1}{2\left(\alpha+\left(\alpha+1\right)^2\right)}\min\left\{\frac{1}{L_x},\hspace{4pt} \frac{1}{L_y}\right\}.
\end{align*}
\end{lemma}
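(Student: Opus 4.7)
The plan is to adapt the exact Sim-GDA contraction argument of Lee et al.\ (2024) by inserting careful bookkeeping to track the relative noise at each step. Expanding the Sim-GDA updates and dividing by $\eta_x, \eta_y$ respectively gives
\begin{align*}
\tfrac{1}{\eta_x}\|x^{k+1}-x^*\|_2^2 + \tfrac{1}{\eta_y}\|y^{k+1}-y^*\|_2^2 = \tfrac{1}{\eta_x}\|x^k - x^*\|_2^2 + \tfrac{1}{\eta_y}\|y^k - y^*\|_2^2 + T_{\mathrm{lin}} + T_{\mathrm{quad}},
\end{align*}
with $T_{\mathrm{lin}} = -2\langle \widetilde{\nabla}_x f(z^k), x^k - x^*\rangle + 2\langle \widetilde{\nabla}_y f(z^k), y^k - y^*\rangle$ and $T_{\mathrm{quad}} = \eta_x\|\widetilde{\nabla}_x f(z^k)\|_2^2 + \eta_y\|\widetilde{\nabla}_y f(z^k)\|_2^2$. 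I then write $\widetilde{\nabla} f = \nabla f + \delta$ everywhere and split $T_{\mathrm{lin}}$ into an ``exact'' piece and a ``noise'' piece.

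For the noise piece $-2\langle \delta_x, x^k - x^*\rangle + 2\langle \delta_y, y^k - y^*\rangle$, I would apply Young's inequality with the specific constant $c = \max\{\eta_x, \eta_y\}/\alpha$; combined with $\|\delta_x\|_2^2 + \|\delta_y\|_2^2 \leqslant \alpha^2(\|\nabla_x f\|_2^2 + \|\nabla_y f\|_2^2)$, this produces \emph{exactly} the $\tfrac{\alpha}{\max\{\eta_x,\eta_y\}}(\|x^k - x^*\|_2^2 + \|y^k - y^*\|_2^2)$ contribution of the lemma, plus a by-product $\alpha\max\{\eta_x,\eta_y\}(\|\nabla_x f\|_2^2 + \|\nabla_y f\|_2^2)$. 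For $T_{\mathrm{quad}}$, I would bound $\eta_x\|\widetilde{\nabla}_x f\|_2^2 + \eta_y\|\widetilde{\nabla}_y f\|_2^2 \leqslant \max\{\eta_x,\eta_y\}(1+\alpha)^2(\|\nabla_x f\|_2^2 + \|\nabla_y f\|_2^2)$ using Minkowski's inequality and the relative-error bound. Summing these two contributions yields the aggregate coefficient $\max\{\eta_x,\eta_y\}[(1+\alpha)^2 + \alpha]$ in front of $\|\nabla_x f\|_2^2 + \|\nabla_y f\|_2^2$, which is the same structure as in the exact case except the exact-case factor $\max\{\eta_x,\eta_y\}$ is inflated by $(1+\alpha)^2 + \alpha$.

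The third step handles the exact part of $T_{\mathrm{lin}}$ jointly with the $\|\nabla_x f\|_2^2 + \|\nabla_y f\|_2^2$ aggregate. Using co-coercivity of the individual partial gradients, $\|\nabla_x f(z^k) - \nabla_x f(x^*, y^k)\|_2^2 \leqslant L_x\langle \nabla_x f(z^k) - \nabla_x f(x^*, y^k), x^k - x^*\rangle$, together with $\|\nabla_x f(x^*, y^k)\|_2^2 \leqslant L_{xy}^2\|y^k - y^*\|_2^2$ (and the symmetric estimates on the $y$-side via strong concavity), the aggregate becomes a linear combination of the inner products $\langle \nabla_x f(z^k) - \nabla_x f(x^*, y^k), x^k - x^*\rangle$ and $\langle \nabla_y f(z^k) - \nabla_y f(x^k, y^*), y^k - y^*\rangle$, plus Lipschitz by-products that give exactly the cross-distance term $2\max\{\eta_x,\eta_y\}[(1+\alpha)^2+\alpha]L_{xy}^2$ appearing in the lemma. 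Merging these inner products with the analogous pieces obtained from splitting the exact part of $T_{\mathrm{lin}}$, the step-size condition $\max\{\eta_x,\eta_y\} \leqslant \tfrac{1}{2[(1+\alpha)^2+\alpha]\max\{L_x,L_y\}}$ is precisely what forces the combined coefficients on those inner products to stay within $[-2,-1]$; then $\mu_x$-strong convexity of $f(\cdot, y^k)$ and $\mu_y$-strong concavity of $f(x^k, \cdot)$ convert them into the stated $-\mu_x\|x^k - x^*\|_2^2$ and $-\mu_y\|y^k - y^*\|_2^2$ terms.

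The main obstacle is the choice to use \emph{co-coercivity} of the partial gradients rather than the naive squared-Lipschitz bound $\|\nabla_x f(z^k)\|_2^2 \leqslant 2L_x^2\|x^k - x^*\|_2^2 + 2L_{xy}^2\|y^k - y^*\|_2^2$. The naive bound leaves an unabsorbable $2\max\{\eta_x,\eta_y\}[(1+\alpha)^2+\alpha]L_x^2\|x^k - x^*\|_2^2$ term whose removal would require a step size of order $\mu_x/L_x^2$, strictly smaller than the stated $1/L_x$-type condition. The co-coercivity variant instead converts $L_x^2$ into $L_x$ times an inner product, so the $1/L_x$-type step size suffices to absorb it, at the cost of consuming exactly half of the $-2\mu_x$ strong-monotonicity gain and leaving $-\mu_x$. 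Once this tactical choice is in place, everything reduces to routine algebraic regrouping.
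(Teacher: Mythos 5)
Your first two steps coincide with the paper's proof (same expansion of the Sim-GDA update, the same Young's inequality with constant $\max\{\eta_x,\eta_y\}/\alpha$ producing the $\tfrac{\alpha}{\max\{\eta_x,\eta_y\}}$ terms plus the $\alpha\max\{\eta_x,\eta_y\}\|g_z\|_2^2$ by-product, and the $(1+\alpha)^2$ bound on the quadratic term). The gap is in your third step. When you split the exact linear part as $-2\langle \nabla_x f(x^k,y^k)-\nabla_x f(x^*,y^k),x^k-x^*\rangle-2\langle \nabla_x f(x^*,y^k),x^k-x^*\rangle$ and $2\langle \nabla_y f(x^k,y^k)-\nabla_y f(x^k,y^*),y^k-y^*\rangle+2\langle \nabla_y f(x^k,y^*),y^k-y^*\rangle$, the anchored inner products are indeed handled by your co-coercivity/strong-monotonicity bookkeeping and reproduce the coefficients $-2+2\max\{\eta_x,\eta_y\}[(1+\alpha)^2+\alpha]L_x\in[-2,-1]$ and the $2\max\{\eta_x,\eta_y\}[(1+\alpha)^2+\alpha]L_{xy}^2$ by-products; but the leftover terms $-2\langle \nabla_x f(x^*,y^k),x^k-x^*\rangle+2\langle \nabla_y f(x^k,y^*),y^k-y^*\rangle$ are never addressed. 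They cancel exactly only for bilinear coupling ($F(x,y)=f(x)+y^\top Ax-g(y)$, where $\nabla_x f(x^*,y^k)=A^\top(y^k-y^*)$ and $\nabla_y f(x^k,y^*)=A(x^k-x^*)$), whereas the lemma is stated for the general class $S_{\mu_x\mu_yL_xL_yL_{xy}}$. For a general member of that class the convexity/concavity gradient inequalities bound these terms only from below (the wrong direction), and the only generic upper bound is Cauchy–Schwarz plus $L_{xy}$-Lipschitzness, which yields an extra $4L_{xy}\|x^k-x^*\|_2\|y^k-y^*\|_2\leqslant 2L_{xy}(\|x^k-x^*\|_2^2+\|y^k-y^*\|_2^2)$ that is \emph{not} multiplied by the step size and therefore cannot be folded into the stated coefficients (in which $L_{xy}$ only ever appears scaled by $\max\{\eta_x,\eta_y\}$).

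The paper avoids this precisely by not staying at the level of gradient inner products: it applies strong convexity/concavity through function values, so the $x$- and $y$-contributions combine into $-2\bigl(f(x^k,y^*)-f(x^*,y^k)\bigr)$, which telescopes through $f(x^*,y^*)$ and, using that $x^*$ minimizes $f(\cdot,y^*)$ and $y^*$ maximizes $f(x^*,\cdot)$, is converted into $-\tfrac{1}{L_x}\|\nabla_x f(x^k,y^*)\|_2^2-\tfrac{1}{L_y}\|\nabla_y f(x^*,y^k)\|_2^2$; the step-size condition and $\|a\|_2^2-2\|b\|_2^2\leqslant 2\|a-b\|_2^2$ then produce the $2\max\{\eta_x,\eta_y\}[(1+\alpha)^2+\alpha]L_{xy}^2$ terms. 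The crossed-point interaction that derails your decomposition is absorbed automatically by this function-value telescoping. To repair your argument, either reroute the strong convexity/concavity step through function values as in the paper, or restrict the claim to the bilinear-coupling subclass where your leftover terms vanish identically.
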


\begin{proof}
We will follow the proof structure of similar theorem from \cite{lee2024fundamentalbenefitalternatingupdates} except here we consider inexact gradients.
\begin{align*}
    \lefteqn{\frac{1}{\eta_x} \|x^{k+1} - x^{*}\|_2^2 + \frac{1}{\eta_x} \|y^{k+1} - y^{*}\|_2^2}
    \\
    &= \frac{1}{\eta_x} \|x^{k} - x^{*}\|_2^2 + \frac{2}{\eta_x} \langle x^{k+1} - x^{k}, x^{k} - x^{*} \rangle + \frac{1}{\eta_x} \|x^{k+1} - x^{k}\|_2^2  \\ 
    & \quad + \frac{1}{\eta_y} \|y^{k} - y^{*}\|_2^2 + \frac{2}{\eta_y} \langle y^{k+1} - y^{k}, y^{k} - y^{*} \rangle + \frac{1}{\eta_y} \|y^{k+1} - y^{k}\|_2^2  \\
    &= \frac{1}{\eta_x} \|x^{k} - x^{*}\|_2^2 - 2\langle \widetilde{\nabla}_x f(x^k,y^k), x^k-x^{*} \rangle + \eta_x \|\widetilde{\nabla}_x f(x^k,y^k)\|_2^2 \\
    & \quad + \frac{1}{\eta_y} \|y^{k} - y^{*}\|_2^2 + 2 \langle \widetilde{\nabla}_y f(x^k,y^k),y^k-y^{*}\rangle + \eta_y \|\widetilde{\nabla}_y f(x^k,y^k)\|_2^2.
\end{align*}
From the relative inexactness of the gradient, it follows that
\begin{multline*} 
    \eta_x \|\widetilde{\nabla}_x f(x^k,y^k)\|_2^2 + \eta_y \|\widetilde{\nabla}_y f(x^k,y^k)\|_2^2 \leqslant \max \{\eta_x,\eta_y\}\|\widetilde{g}^z(x^k,y^k)\|_2^2 
    \\
    \leqslant \max \{\eta_x,\eta_y\}(1+\alpha)^2\|g^z(x^k,y^k)\|_2^2,
\end{multline*}
and
\begin{align*}
    \lefteqn{-2 \langle \widetilde{\nabla}_x f(x^k,y^k),x^k-x^{*}\rangle + 2\langle \widetilde{\nabla}_y f(x^k,y^k),y^k-y^{*}\rangle }
    \\
    &= -2\langle \nabla_x f(x^k,y^k), x^k-x^{*}\rangle + 2\langle \nabla_y f(x^k,y^k),y^k-y^{*}\rangle 
    \\& \quad -2\langle \widetilde{g}^z(x^k,y^k) - g^z(x^k,y^k),z^k-z^*\rangle
    \\ &\leqslant -2\langle \nabla_x f(x^k,y^k),x^k-x^{*}\rangle + 2\langle \nabla_y f(x^k,y^k), y^k-y^{*}\rangle 
    \\& \quad + \alpha \max \{\eta_x,\eta_y\}\|g^z(x^k,y^k)\|_2^2 + \frac{\alpha}{\max \{\eta_x,\eta_y\}}\|z^k-z^0\|_2^2. 
\end{align*}
Substituting into the main inequality we get
\begin{align*}
    \lefteqn{\frac{1}{\eta_x} \|x^{k+1} - x^{*}\|_2^2 + \frac{1}{\eta_x} \|y^{k+1} - y^{*}\|_2^2 }
    \\
    & \leqslant \left(\frac{1}{\eta_x} + \frac{\alpha}{\max \{\eta_x,\eta_y\}}\right)\|x^{k} - x^{*}\|_2^2 + \left(\frac{1}{\eta_y} + \frac{\alpha}{\max \{\eta_x,\eta_y\}}\right)\|y^{k} - y^{*}\|_2^2  
    \\ & \quad -2\langle\nabla_x f(x^k,y^k),x^k-x^{*}\rangle + 2\langle \nabla_y f(x^k,y^k), y^k-y^{*}\rangle
    \\& \quad + \max \{\eta_x,\eta_y\} \left(( 1 + \alpha)^2  + \alpha\right)\|g^z(x^k,y^k)\|_2^2. 
\end{align*}
We now use properties of $S_{\mu_x\mu_yL_xL_yL_{xy}}$. First, $f(\cdot,y)$ is $\mu_x$-strongly-convex with $L_x$-Lipschitz gradient:
\begin{align*}
    &-2\langle \nabla_x f(x^k,y^k),x^k-x^{*} \rangle \leqslant -\mu_x \|x^k-x^{*}\|_2^2 - 2\left(f(x^k,y^k) - f(x^*,y^k)\right), \\
    &-2\left(f(x^k,y^*) - f(x^*,y^*)\right) \leqslant -\frac{\|\nabla_x f(x^k,y^*)\|_2^2}{L_x}.
\end{align*}
Second, $f(x,\cdot)$ is $\mu_y$-strongly-concave with $L_y$-Lipschitz gradient:
\begin{align*}
    &2\langle \nabla_y f(x^k,y^k), y^k-y^{*} \rangle \leqslant -\mu_y \|y^k-y^{*}\|_2^2 - 2\left(f(x^k,y^*) - f(x^k,y^k)\right), \\
    &-2\left(f(x^*,y^*) - f(x^*,y^k)\right) \leqslant -\frac{\|\nabla_y f(x^*,y^k)\|_2^2}{L_y}.
\end{align*}
By applying the inequalities we get
\begin{align*}
    & \quad\;\; \frac{1}{\eta_x} \|x^{k+1} - x^{*}\|_2^2 + \frac{1}{\eta_x} \|y^{k+1} - y^{*}\|_2^2 
    \\& \leqslant \left(\frac{1}{\eta_x} + \frac{\alpha}{\max \{\eta_x,\eta_y\}} -\mu_x \right)\|x^{k} - x^{*}\|_2^2 + \left(\frac{1}{\eta_y} + \frac{\alpha}{\max \{\eta_x,\eta_y\}} - \mu_y\right)\|y^{k} - y^{*}\|_2^2 
    \\& \quad + \max \{\eta_x,\eta_y\}\left((1+\alpha)^2 + \alpha\right)\|g^z(x^k,y^k)\|_2^2 - \frac{\|\nabla_x f(x^k,y^*)\|_2^2}{L_x} -\frac{\|\nabla_y f(x^*,y^k)\|_2^2}{L_y}. 
\end{align*}
Now, suppose 
\begin{align*}
    &\max \{\eta_x,\eta_y\} \leqslant \frac{1}{2\left(\alpha+\left(\alpha+1\right)^2\right)}\min\left\{\frac{1}{L_x},\hspace{4pt} \frac{1}{L_y}\right\}. 
\end{align*}
Then we can now use $L_{xy}$-Lipschitz condition for gradients
\begin{align*}
    &\quad \max \{\eta_x,\eta_y\}\left((1+\alpha)^2 + \alpha\right)\|\nabla_x f(x^k,y^k)\|_2^2 -\frac{1}{L_x}\|\nabla_x f(x^k,y^*)\|_2^2 
    \\&\leqslant \max \{\eta_x,\eta_y\}\left((1+\alpha)^2 + \alpha\right)\left(\|\nabla_x f(x^k,y^k)\|_2^2 - 2\|\nabla_x f(x^k,y^*)\|_2^2\right) 
    \\& \leqslant 2\max \{\eta_x,\eta_y\}\left((1+\alpha)^2 + \alpha\right)L_{xy}^2\|y^k-y^*\|_2^2, 
\end{align*}
and \begin{align*}
    & \quad \max \{\eta_x,\eta_y\}\left((1+\alpha)^2 + \alpha\right)\|\nabla_y f(x^k,y^k)\|_2^2 -\frac{1}{L_y}\|\nabla_y f(x^*,y^k)\|_2^2 
    \\&\leqslant \max \{\eta_x,\eta_y\}\left((1+\alpha)^2 + \alpha\right)\left(\|\nabla_y f(x^k,y^k)\|_2^2 - 2\|\nabla_y f(x^*,y^k)\|_2^2\right) 
    \\&\leqslant 2\max \{\eta_x,\eta_y\}\left((1+\alpha)^2 + \alpha\right)L_{xy}^2\|x^k-x^*\|_2^2.
\end{align*}
By using this last pair of inequalities we finally prove the statement of the Lemma.
\end{proof}

We are now in a position to find conditions that are sufficient for linear convergence of Inexact Sim-GDA. To simplify the analysis we let $L_x = L_y$, $\mu_x = \mu_y$, $\eta_x = \eta_y$ which does not impact the result's value that much since we still leave $L_{xy}$ to be arbitrary.

% corollary for iteration complexity
\textbf{\cref{corollary:upper_bound_scsc_smooth}}
\textit{
Let \cref{as:long} hold with $\mu_x=\mu_y=\mu$ and $L_x=L_y=L$. If Assumption \ref{as:conv-conc-inexact} holds with $\alpha$ satisfying 
\begin{eqnarray*}
    \text{case} \quad  L_{xy}^2 \leqslant \frac{\mu L}{2}:&& \quad  \alpha \left((1+\alpha)^2 + \alpha\right) < \frac{\mu}{2L} - \frac{L_{xy}^2}{2L^2}, \\
    \text{case} \quad  L_{xy}^2 > \frac{\mu L}{2}:&& \quad     \alpha \left((1+\alpha)^2 + \alpha\right) < \frac{\mu^2}{8L_{xy}^2},
\end{eqnarray*}
then Algorithm \ref{alg:sim} with an appropriate choice of $\eta_x=\eta_y=\eta$ has linear convergence.
% Consider $f \in S_{\mu\mu L LL_{xy}}$ (\cref{as:long}). Inexact Sim-GDA retains linear convergence if 
% \begin{align*}
%     &\text{case} \quad  L_{xy}^2 \leqslant \frac{\mu L}{2}: \quad  \alpha \left((1+\alpha)^2 + \alpha\right) < \frac{\mu}{2L} - \frac{L_{xy}^2}{2L^2}, \\
%     &\text{case} \quad  L_{xy}^2 > \frac{\mu L}{2}: \quad     \alpha \left((1+\alpha)^2 + \alpha\right) < \frac{\mu^2}{8L_{xy}^2}.
% \end{align*}
}

\begin{proof}[Proof of~\cref{corollary:upper_bound_scsc_smooth}]
First, we write the contraction inequality from Lemma~\ref{lem:contraction_scsc_smooth}
\begin{align*}
    \lefteqn{\|z^{k+1} - z^{*}\|_2^2}
    \\
    & \leqslant 
    \left(1 + \alpha - \mu \eta + 2\left((1+\alpha)^2 + \alpha\right)L_{xy}^2 \eta^2 \right)\|z^{k} - z^{*}\|_2^2 
    \\&= \left(1+\alpha - \frac{\mu^2}{8\left(\alpha+(1+\alpha)^2\right)L_{xy}^2} + 2\left((1+\alpha)^2 + \alpha\right)L_{xy}^2 \left(\eta - \frac{\mu}{4\left((1+\alpha)^2 + \alpha\right)L_{xy}^2}\right)^2\right)
    \\&\quad \times \|z^{k} - z^{*}\|_2^2. 
\end{align*}

If $\mu L \leqslant 2L_{xy}^2$ then optimal step is
\begin{align*}
    \eta = \frac{\mu}{4\left((1+\alpha)^2 + \alpha\right)L_{xy}^2},
\end{align*}
and the contraction factor is less than $1$ if
\begin{align*}
    \alpha \left((1+\alpha)^2 + \alpha\right) < \frac{\mu^2}{8L_{xy}^2}.
\end{align*}

For the case $\mu L > 2L_{xy}^2$, the optimal step is 
\begin{align*}
    \eta = \frac{1}{2\left(\alpha+(1+\alpha)^2 \right)L}, 
\end{align*}
which follows from step restriction in Lemma~\ref{lem:contraction_scsc_smooth}. The Linear convergence holds if 
\begin{align*}
    \alpha \left((1+\alpha)^2 + \alpha\right) < \frac{\mu}{2L} - \frac{L_{xy}^2}{2L^2}.
\end{align*}
\end{proof}

% \subsection{Proofs of \cref{corollary:upper_bound_stongly_monotone_Lipshitz} and \cref{teo:sim_gda_lower_bound_alpha}}
\subsection[Proofs of Corollary~\ref{corollary:upper_bound_stongly_monotone_Lipshitz} 
and Theorem~\ref{teo:sim_gda_lower_bound_alpha}]%
{Proofs of \texorpdfstring{\cref{corollary:upper_bound_stongly_monotone_Lipshitz} 
and \cref{teo:sim_gda_lower_bound_alpha}}{Corollary~\ref{corollary:upper_bound_stongly_monotone_Lipshitz} 
and Theorem~\ref{teo:sim_gda_lower_bound_alpha}}}

%contraction lemma for mu-strongly-monotone L-Lipshitz gradient (Sim-GDA)
\begin{lemma} \label{lem:contraction_stongly_monotone_Lipshitz}
Let Assumptions \ref{as:conv-conc} and \ref{as:conv-conc-inexact}  hold. Then, the sequence $(x^k,y^k)$ generated by Inexact Sim-GDA Algorithm \ref{alg:sim} with $\eta_x=\eta_y=\eta$  satisfies
% Let $f \in S_{\mu, L}$ (\cref{as:conv-conc}) For Inexact Sim-GDA with relative gradient inexactness, the following inequality holds:
\begin{align*}
    \|z^{k+1}-z^{*}\|_2^2 \leqslant \left(1 - 2(\mu-\alpha L) \eta + (1+\alpha)^2L^2\eta^2 \right)\|z^{k}-z^{*}\|_2^2.
\end{align*}
% where $\eta_x=\eta_y=\eta$ is the step size.
\end{lemma}
\begin{proof}
We have
\begin{align*}
    \|z^{k+1}-z^{*}\|_2^2 &= \|z^{k}-z^{*}\|_2^2 - 2\eta \langle \widetilde{g}^z(z^k) - g^z(z^k) + g^z(z^k), z^{k}-z^{*} \rangle + \eta^2\|\widetilde{g}^z(z^k)\|_2^2 
    \\&= \|z^{k}-z^{*}\|_2^2 - 2\eta \langle \widetilde{g}^z(z^k) - g^z(z^k), z^{k}-z^{*} \rangle
    \\&  \quad - 2\eta \langle g^z(z^k) - g^z(z^*), z^{k}-z^{*}\rangle + \eta^2\|\widetilde{g}^z(z^k)\|_2^2.
\end{align*}
By Assumption  \ref{as:conv-conc-inexact} we get
\begin{align*}
    -2\eta\langle \widetilde{g}^z(z^k) - g^z(z^k), z^{k}-z^{*} \rangle & \leqslant 2\eta \|\widetilde{g}^z(z^k) - g^z(z^k)\| \|z^{k}-z^{*}\| 
    \\& \leqslant 2\eta\alpha \|g^z(z^k) - g^z(z^k)\|\|z^{k}-z^{*}\| 
    \\& \leqslant 2\eta\alpha L\|z^{k}-z^{*}\|_2^2,
\end{align*}
and 
\[
    \|\widetilde{g}^z(z^k)\|_2^2 \leqslant (1+\alpha)^2L^2\eta^2 \|z^{k}-z^{*}\|_2^2.
\]
By substituting the last two inequalities and using the properties of $f$ we prove the statement of the lemma.
\end{proof}

%iteration complexity for mu-strongly-monotone L-Lipshitz gradient (Sim-GDA)

\textbf{\cref{corollary:upper_bound_stongly_monotone_Lipshitz}}
\textit{ 
Let Assumption \ref{as:conv-conc} hold. If Assumption \ref{as:conv-conc-inexact} holds with $\alpha<\frac{\mu}{L}$, then Algorithm \ref{alg:sim} with parameters $(z^0, \eta,\eta)$, where $\eta=\frac{\mu -\alpha L}{(1+\alpha)^2L^2}$ generates $z^N$ s.t. $\|z^N-z^*\|_2\leq \varepsilon$ in 
\begin{align*}
     N=\mathcal{O} \rbr*{\frac{L^2}{\mu^2}\frac{1}{(1 - \alpha L/\mu)^2}\ln\frac{\|z^0-z^*\|_2}{\varepsilon}} \;\; \text{iterations.}
\end{align*} 
% For $f \in S_{\mu, L}$ (\cref{as:conv-conc}), Inexact Sim-GDA linearly converges with iteration complexity
% \begin{align*}
%     \mathcal{O} \rbr*{\frac{L^2}{\mu^2}\frac{1}{(1 - \alpha L/\mu)^2}}
% \end{align*}
% where we omit logarithmic factors.
}

\begin{proof}[Proof of \cref{corollary:upper_bound_stongly_monotone_Lipshitz}]
After rewriting contraction factor from Lemma~\ref{lem:contraction_stongly_monotone_Lipshitz}:
\begin{align*}
    1 - 2(\mu-\alpha L) \eta + (1+\alpha)^2L^2\eta^2 = 1 - \frac{(\mu-\alpha L)^2}{L^2(1+\alpha)^2} + \left(\eta(1+\alpha)L - \frac{\mu-\alpha L}{L(1+\alpha)} \right)^2.
\end{align*}
One can see that a step size choice of 
\begin{align*}
    \eta =  \frac{\mu -\alpha L}{(1+\alpha)^2L^2}
\end{align*}
ensures the stated iteration complexity.
\end{proof}

%lower bound for Sim-GDA
\textbf{\cref{teo:sim_gda_lower_bound_alpha}}
\textit{
There exists a function $f$ satisfying \cref{as:conv-conc} with $d_x = d_y = 1$ such that Inexact Sim-GDA with any constant step size $\eta$ loses linear convergence when $\alpha \geqslant \frac{\mu}{L}$.
%There exists such function $f \in S_{\mu, L}$ (\cref{as:conv-conc}) with $d_x = d_y = 1$ such that Inexact Sim-GDA with any constant step size $\eta$ loses linear convergence when $\alpha \geqslant \frac{\mu}{L}$.
}
\begin{proof}[Proof of~\cref{teo:sim_gda_lower_bound_alpha}]
Consider function 
\begin{align*}
    F(x,y) = \frac{\epsilon}{2}x^2 + xy - \frac{\epsilon}{2}y^2,
\end{align*}
where $\epsilon > 0$. We make one step of exact Sim-GDA:
\begin{align*}
    &x^{k+1} = x^k - \eta \left(y^k + \epsilon x^k \right)\\
    &y^{k+1} = y^k + \eta \left(x^k-\epsilon y^k \right),
\end{align*}
and introduce operator $g(x^k,y^k) = \left[ (y^k + \epsilon x^k)^\top, (-x^k +\epsilon y^k)^\top \right]^\top$:
\begin{align*}
    z^{k+1} = z^{k} - \eta g(x^k,y^k).
\end{align*}
The operator $g$ is $\mu$-strongly-monotone and $L$-Lipshitz with $\mu = \epsilon$ and $L = \sqrt{1+\epsilon^2}$. Let $\widetilde{g}(x^k, y^k)$ be a disturbed value of $g$ in $(x^k, y^k)$. Then
\begin{align*}
    \|g(x^k,y^k) - \widetilde{g}(x^k, y^k)\| = \epsilon \sqrt{(x^k)^2 + (y^k)^2} \leqslant \frac{\epsilon}{\sqrt{1+\epsilon^2}} \|g(x^k,y^k) \| = \frac{\mu}{L} \|g(x^k,y^k) \|,
\end{align*}
which means that $g$ satisfies relative inexactness definition with $\alpha = \frac{\mu}{L}$. A step of inexact Sim-GDA with disturbed $\widetilde{g}$ leads to
\begin{align*}
    &x^{k+1} = x^k - \eta \hspace{1pt}y^k, \\
    &y^{k+1} = y^k + \eta \hspace{1pt}x^k.
\end{align*}
For any $\eta > 0$:
\begin{align*}
    (x^{k+1})^2 + (y^{k+1})^2 = \left (1 + \eta^2 \right) \left((x^{k})^2 + (y^{k})^2 \right) > (x^{k})^2 + (y^{k})^2,
\end{align*}
and we have divergence.
\end{proof}

% \valery{

%contraction lemma for mu-strongly-monotone L-Lipshitz gradient (Alt-GDA)

\begin{lemma} \label{lem:contraction_stongly_monotone_Lipshitz_ALT}
Let Assumptions \ref{as:conv-conc} and \ref{as:conv-conc-inexact}  hold. Then, the sequence $(x^k,y^k)$ generated by Inexact Alt-GDA Algorithm \ref{alg:alt} with $\eta_x=\eta_y=\eta$  satisfies
% Let $f \in S_{\mu, L}$ (\cref{as:conv-conc}) For Inexact Sim-GDA with relative gradient inexactness, the following inequality holds:
\begin{align*}
    \|z^{k+1}-z^{*}\|_2^2 \leqslant \left(1 - 2(\mu-\sqrt{2}\alpha L) \eta + 4(1+\alpha)^2L^2\eta^2 + (1+\alpha)^4L^4\eta^4\right)\|z^{k}-z^{*}\|_2^2.
\end{align*}
% where $\eta_x=\eta_y=\eta$ is the step size.
\end{lemma}
\begin{proof}
We have
\begin{align*}
    \|z^{k+1}-z^{*}\|_2^2 &= \|z^{k}-z^{*}\|_2^2 - 2\eta \langle \widetilde{g}^x(x^k,y^k), x^{k}-x^{*} \rangle + \eta^2\|\widetilde{g}^x(x^k,y^k)\|_2^2 
    \\& \quad + 2\eta \langle \widetilde{g}^y(x^{k+1},y^k), y^{k}-y^{*} \rangle + \eta^2\|\widetilde{g}^y(x^{k+1},y^k)\|_2^2
\end{align*}
By Assumption  \ref{as:conv-conc-inexact} we get
\begin{align*}
    -\langle \widetilde{g}^x(x^k,y^k), x^{k}-x^{*} \rangle &= -\langle \widetilde{g}^x(x^k,y^k)-g^x(x^k,y^k), x^{k}-x^{*} \rangle - \langle g^x(x^k,y^k), x^{k}-x^{*} \rangle 
    \\& \leqslant \alpha \|g^z(x^k,y^k)\|\|x^{k}-x^{*}\| - \langle g^x(x^k,y^k), x^{k}-x^{*} \rangle
\end{align*}

\begin{align*}
    \langle \widetilde{g}^y(x^{k+1},y^k), y^{k}-y^{*} \rangle &\leqslant  \alpha \|g^z(x^{k+1},y^k)\|\|y^{k}-y^{*}\| + \langle g^y(x^{k+1},y^k), y^{k}-y^{*} \rangle =
    \\& = \alpha \|g^z(x^{k},y^k)\|\|y^{k}-y^{*}\| 
    \\& \quad + \alpha \|y^{k}-y^{*}\| \left(\|g^z(x^{k+1},y^k)\| - \|g^z(x^k,y^k)\| \right) + 
    \\& + \langle g^y(x^{k},y^k), y^{k}-y^{*} \rangle + \langle g^y(x^{k+1},y^k) - g^y(x^{k},y^k), y^{k}-y^{*} \rangle
\end{align*}

\begin{align*}
   \lefteqn{\alpha \|y^{k}-y^{*}\| \left(\|g^z(x^{k+1},y^k)\| - \|g^z(x^k,y^k)\| \right) }
   \\ &\leqslant \alpha \|y^{k}-y^{*}\| \|g^z(x^{k+1},y^k) - g^z(x^k,y^k)\| \leqslant \alpha L \|y^{k}-y^{*}\| \|x^{k+1}-x^{k}\|
   \\& = \alpha \eta L \|y^{k}-y^{*}\| \|\widetilde{g}^x(x^{k},y^k)\| \leqslant \alpha(1-\alpha) \eta L \|y^{k}-y^{*}\| \|g^z(x^{k},y^k)\| \leqslant
   \\& \leqslant \eta \alpha(1-\alpha)  L^2 \|z^{k}-z^{*}\|^2
\end{align*}

\begin{align*}
   \langle g^y(x^{k+1},y^k) - g^y(x^{k},y^k), y^{k}-y^{*} \rangle &\leqslant \|y^{k}-y^{*}\| \|g^y(x^{k+1},y^k) - g^y(x^k,y^k)\| 
   \\& \leqslant L \|y^{k}-y^{*}\| \|x^{k+1}-x^{k}\| 
   \\& = L \eta \|y^{k}-y^{*}\|  \|\widetilde{g}^x(x^{k},y^k)\| \\& \leqslant L \eta (1+\alpha) \|y^{k}-y^{*}\|  \|g^z(x^{k},y^k)\| 
   \\& \leqslant \eta (1+\alpha) L^2 \|z^{k}-z^{*}\|^2
\end{align*}

By combining four inequalities above we get

\begin{align*}
   & 2\eta \left(-\langle \widetilde{g}^x(x^k,y^k), x^{k}-x^{*} \rangle + \langle \widetilde{g}^y(x^{k+1},y^k), y^{k}-y^{*} \rangle \right) 
   \\& \leqslant 2\eta \left(\alpha \|g^z(x^{k},y^k)\|(\|x^{k}-x^{*}\| + \|y^{k}-y^{*}\|) + \eta (1+\alpha)^2 L^2 \|z^{k}-z^{*}\|^2 \right) 
   \\& \quad - 2\eta \langle g^x(x^k,y^k), x^{k}-x^{*} \rangle 
   \\& \leqslant 2\sqrt{2}\eta \alpha L \|z^{k}-z^{*}\|^2 + 2 \eta^2 L^2 (1+\alpha)^2\|z^{k}-z^{*}\|^2 - 2\eta \mu \|z^{k}-z^{*}\|^2
\end{align*}

Now we deal with squared gradients part:

\begin{align*}
   &\eta^2\|\widetilde{g}^x(x^k,y^k)\|_2^2 + \eta^2\|\widetilde{g}^y(x^{k+1},y^k)\|_2^2 \leqslant \eta^2 (1+\alpha)^2 \left(\|g^z(x^k,y^k)\|_2^2 + \|g^z(x^{k+1},y^k)\|_2^2\right) \leqslant
   \\&\leqslant \eta^2 L^2(1+\alpha)^2 \left( \|x^{k}-x^{*}\|^2 + \|x^{k+1}-x^{*}\|^2 + 2\|y^{k}-y^{*}\|^2 \right) = 
   \\& = \eta^2 L^2(1+\alpha)^2 \left (2\|z^{k}-z^{*}\|^2 + \eta^2 L^2 (1+\alpha)^2 \|z^{k}-z^{*}\|^2\right)
\end{align*}

By substituting the last two inequalities we prove the statement of the lemma.

\end{proof}

%iteration complexity for mu-strongly-monotone L-Lipshitz gradient (Alt-GDA)

\textbf{\cref{corollary:upper_bound_stongly_monotone_Lipshitz_ALT}}
\textit{ 
Let Assumption \ref{as:conv-conc} hold. If Assumption \ref{as:conv-conc-inexact} holds with $\alpha<\frac{\mu}{\sqrt{2}L}$, then Algorithm \ref{alg:alt} with parameters $(z^0, \eta,\eta)$, where $\eta= \mathcal{O} \rbr*{\frac{\mu}{L^2}}$ generates $z^N$ s.t. $\|z^N-z^*\|_2\leq \varepsilon$ in 
\begin{align*}
     N=\mathcal{O} \rbr*{\frac{L^2}{\mu^2}\frac{1}{(1 - \alpha \sqrt{2}L/\mu)^2}\ln\frac{\|z^0-z^*\|_2}{\varepsilon}} \;\; \text{iterations.}
\end{align*} 
% For $f \in S_{\mu, L}$ (\cref{as:conv-conc}), Inexact Sim-GDA linearly converges with iteration complexity
% \begin{align*}
%     \mathcal{O} \rbr*{\frac{L^2}{\mu^2}\frac{1}{(1 - \alpha L/\mu)^2}}
% \end{align*}
% where we omit logarithmic factors.
}

\begin{proof}[Proof of \cref{corollary:upper_bound_stongly_monotone_Lipshitz_ALT}]
To prove the statement we find the smallest possible value of contraction factor from Lemma~\ref{lem:contraction_stongly_monotone_Lipshitz_ALT}:
\begin{align*}
    \rho^2(\eta) = 1 - 2(\mu-\sqrt{2}\alpha L) \eta + 4(1+\alpha)^2L^2\eta^2 + (1+\alpha)^4L^4\eta^4
\end{align*}
After taking derivative we find the only real root with Cardano's formula:
\begin{align*}
    \eta = \frac{s_1 + s_2}{L(1+\alpha)}
\end{align*}
where we let
\begin{align*}
    &s_{1,2} = \left(s \pm\sqrt{\frac{8}{27}+s^2} \right)^{1/3} \\
    &s = \frac{\mu/\sqrt{2}L -\alpha}{2\sqrt{2}(1+\alpha)}
\end{align*}
After substituting optimal step size into contraction inequality we get
\begin{align*}
    \rho_{opt}^2 = 1 - 2 \left(s_1 + s_2\right)\frac{\mu-\sqrt{2}\alpha L}{L(1+\alpha)} +4(s_1+s_2)^2 + (s_1+s_2)^4
\end{align*}
Given that with big enough condition numbers
\begin{align*}
    s_1 + s_2 = \frac{2}{3}s + o(s) = \frac{\mu -\sqrt{2}\alpha L}{6L(1+\alpha)} + o\left(\frac{1}{L}\right)
\end{align*}
we prove the stated iteration complexity.
\end{proof}

%lower bound for Alt-GDA
\textbf{\cref{teo:alt_gda_lower_bound_alpha}}
\textit{
There exists a function $f$ satisfying \cref{as:conv-conc} with $d_x = d_y = 1$ such that Inexact Alt-GDA with any constant step size $\eta$ loses linear convergence when $\alpha \geqslant \frac{\mu}{L}$.
%There exists such function $f \in S_{\mu, L}$ (\cref{as:conv-conc}) with $d_x = d_y = 1$ such that Inexact Sim-GDA with any constant step size $\eta$ loses linear convergence when $\alpha \geqslant \frac{\mu}{L}$.
}
\begin{proof}[Proof of~\cref{teo:alt_gda_lower_bound_alpha}]
The idea is the same as in the proof for Sim-GDA: we choose a disturbed operator in such a way that the inexact method would not converge if $\alpha$ exceeds a certain value. Consider function 
\begin{align*}
    F(x,y) = \frac{\epsilon}{2}x^2 + xy - \frac{\epsilon}{2}y^2,
\end{align*}
and corresponding gradient operator $g(x^k,y^k) = \left[ (y^k + \epsilon x^k)^\top, (-x^k +\epsilon y^k)^\top \right]^\top$. Because $\alpha \geqslant \mu/L$, we can choose a disturbed operator $\widetilde{g}(x^k,y^k) = \left[ y^k{}^\top, -x^k{}^\top \right]^\top$. This disturbed operator is the same as an undisturbed gradient operator of the bilinear function $xy$ for which the exact Alt-GDA is known not to converge \cite{lee2024fundamentalbenefitalternatingupdates}. This concludes the proof.

\end{proof}

% }

% \section{Missing Proofs for~\cref{sec:ne}}
% \label{appendix ne}

\section[Missing Proofs for Section~\ref{sec:ne}]%
        {Missing Proofs for~\texorpdfstring{\cref{sec:ne}}{Section~\ref{sec:ne}}}
\label{appendix ne}

\begin{lemma} \label{lem:eg_contraction_stongly_monotone_Lipshitz}
Let Assumptions \ref{as:lipqsm} and \ref{as:ne-inexact} hold. Then, the sequence $z^k$ generated by Inexact ExtraGradient Method Algorithm \ref{alg:eg}, satisfies the following inequality
\begin{align*}
    &\|z^{k+1}-z^{*}\|_2^2  \leqslant \|z^{k}-z^{*}\|_2^2 - \eta\mu\|z^{k+1/2}-z^{*} \|_2^2 + \\& \quad + \left[\left(\frac{\eta\alpha^2}{\mu} +3\eta^2\alpha^2\right) \left(2L^2+\frac{2}{\eta^2(1-\alpha)^2}\right)+3\eta^2L^2 + \frac{3\alpha^2}{(1-\alpha)^2} - 1 \right] \|z^{k+\frac{1}{2}}-z^{k}\|_2^2.
\end{align*}
\end{lemma}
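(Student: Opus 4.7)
The plan is to combine the familiar three-point identity for extragradient with the quasi-strong monotonicity and $L$-Lipschitzness of $g$, and to control every noise-induced term by Cauchy-Schwarz together with Young's inequality.

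To start, I would expand
\[
\|z^{k+1}-z^*\|_2^2 = \|z^k-z^*\|_2^2 - 2\eta\langle\widetilde g(z^{k+1/2}),\,z^k-z^*\rangle + \eta^2\|\widetilde g(z^{k+1/2})\|_2^2,
\]
split $z^k - z^* = (z^k-z^{k+1/2}) + (z^{k+1/2}-z^*)$, and use the update rule $z^k-z^{k+1/2}=\eta\widetilde g(z^k)$. Completing the square on $\eta^2\|\widetilde g(z^{k+1/2})\|_2^2 - 2\eta^2\langle\widetilde g(z^{k+1/2}),\widetilde g(z^k)\rangle$ gives $\eta^2\|\widetilde g(z^{k+1/2})-\widetilde g(z^k)\|_2^2 - \|z^{k+1/2}-z^k\|_2^2$, producing the familiar extragradient identity
\[
\|z^{k+1}-z^*\|_2^2 = \|z^k-z^*\|_2^2 - 2\eta\langle\widetilde g(z^{k+1/2}),\,z^{k+1/2}-z^*\rangle - \|z^{k+1/2}-z^k\|_2^2 + \eta^2\|\widetilde g(z^{k+1/2})-\widetilde g(z^k)\|_2^2.
\]
This already explains the $-1$ appearing in the bracketed coefficient. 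Next I would handle the inner product by writing $\widetilde g = g + (\widetilde g - g)$: the exact piece obeys QSM and contributes $-2\eta\mu\|z^{k+1/2}-z^*\|_2^2$, while the noise piece is at most $2\eta\alpha\|g(z^{k+1/2})\|_2\|z^{k+1/2}-z^*\|_2$ by Cauchy-Schwarz and~\eqref{eq:relative_error_intro_op}. A (deliberately loose) Young's split that leaves $\eta\mu\|z^{k+1/2}-z^*\|_2^2$ on the strongly-monotone side yields the bound $-\eta\mu\|z^{k+1/2}-z^*\|_2^2 + \tfrac{4\alpha^2}{\mu}\|g(z^{k+1/2})\|_2^2$, explaining both the $-\eta\mu\|z^{k+1/2}-z^*\|_2^2$ term and the factor $\tfrac{4\alpha^2}{\mu}$ in the statement.

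For the squared Lipschitz term I would split $\widetilde g(z^{k+1/2})-\widetilde g(z^k)$ as $(\widetilde g(z^{k+1/2})-g(z^{k+1/2})) + (g(z^{k+1/2})-g(z^k)) + (g(z^k)-\widetilde g(z^k))$, apply $\|a+b+c\|_2^2\leq 3(\|a\|_2^2+\|b\|_2^2+\|c\|_2^2)$, and bound the outer pieces by the relative error and the middle piece by $L$-Lipschitzness, obtaining
\[
\eta^2\|\widetilde g(z^{k+1/2})-\widetilde g(z^k)\|_2^2 \leq 3\eta^2\alpha^2\|g(z^{k+1/2})\|_2^2 + 3\eta^2 L^2\|z^{k+1/2}-z^k\|_2^2 + 3\eta^2\alpha^2\|g(z^k)\|_2^2.
\]

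The closing step is to re-express $\|g(z^{k+1/2})\|_2^2$ and $\|g(z^k)\|_2^2$ as multiples of $\|z^{k+1/2}-z^k\|_2^2$. The reverse triangle inequality applied to~\eqref{eq:relative_error_intro_op} gives $\|g(z^k)\|_2\leq \|\widetilde g(z^k)\|_2/(1-\alpha) = \|z^{k+1/2}-z^k\|_2/(\eta(1-\alpha))$, contributing the $\tfrac{3\alpha^2}{(1-\alpha)^2}\|z^{k+1/2}-z^k\|_2^2$ term. For the other gradient, adding and subtracting $g(z^k)$ plus $L$-Lipschitzness gives $\|g(z^{k+1/2})\|_2^2\leq (2L^2+2/(\eta^2(1-\alpha)^2))\|z^{k+1/2}-z^k\|_2^2$, matching the second factor of the first product in the bracket. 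Collecting all $\|z^{k+1/2}-z^k\|_2^2$ coefficients reproduces the stated bound. The only real care-point is keeping track, at each step, of whether a noise norm should be controlled by $\|g\|_2$ (through $\|\widetilde g-g\|_2\leq \alpha\|g\|_2$) or by $\|\widetilde g\|_2$ (through the reverse bound $\|g\|_2\leq \|\widetilde g\|_2/(1-\alpha)$); once this bookkeeping is done consistently, no individual inequality is non-routine.
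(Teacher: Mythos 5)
Your proposal is correct and follows essentially the same route as the paper: the inexact extragradient identity, quasi-strong monotonicity plus Cauchy--Schwarz and Young on the noise term, a three-term split of $\widetilde g(z^{k+1/2})-\widetilde g(z^k)$ with $\|a+b+c\|_2^2\le 3(\|a\|_2^2+\|b\|_2^2+\|c\|_2^2)$, and the reverse bound $\|g(z^k)\|_2\le\|\widetilde g(z^k)\|_2/(1-\alpha)$ together with $\eta\widetilde g(z^k)=z^k-z^{k+1/2}$ to convert everything into $\|z^{k+1/2}-z^k\|_2^2$. The only differences are cosmetic bookkeeping (the paper converts $\|g(z^k)\|_2$ into $\|\widetilde g(z^k)\|_2$ earlier, you do it at the end) and the constant $\tfrac{4\alpha^2}{\mu}$, where your ``deliberately loose'' Young step carries exactly the same harmless slack as the paper, whose own chain actually produces $\tfrac{\eta\alpha^2}{\mu}$ (so both implicitly use $\eta\le 4$, automatic for $\eta\sim 1/L$).
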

\begin{proof}
The first part of the chain is straightforward:
\begin{align*}
    & \|z^{k+1}-z^*\|_2^2 = \|z^k - z^*\|_2^2 + 2 \langle z^{k+1} - z^k, z^{k+\frac{1}{2}} - z^* \rangle + \|z^{k+1} - z^{k+\frac{1}{2}}\|_2^2 - \|z^k - z^{k + \frac{1}{2}} \|_2^2  \\
    &= \|z^k - z^* \|_2^2 - 2 \eta \langle \widetilde{g}(z^{k+\frac{1}{2}}), z^{k+\frac{1}{2}} - z^*\rangle + \eta^2 \|\widetilde{g}(z^k) - \widetilde{g}(z^{k+\frac{1}{2}}) \|_2^2 - \|z^k - z^{k+\frac{1}{2}}\|_2^2  \\
    &= \|z^k-z^*\|_2^2 - 2 \eta \langle g(z^{k+\frac{1}{2}}) + \widetilde{g}(z^{k+\frac{1}{2}}) - g(z^{k+\frac{1}{2}}), z^{k+\frac{1}{2}} - z^*\rangle + \eta^2 \|\widetilde{g}(z^k) - \widetilde{g}(z^{k+\frac{1}{2}}) \|_2^2 
    \\& \quad - \|z^k - z^{k+\frac{1}{2}}\|_2^2  \\
    &\leqslant \|z^k - z^* \|_2^2 - 2 \eta \mu \| z^{k+\frac{1}{2}} - z^* \|_2^2 + 2 \eta \alpha \|g(z^{k+\frac{1}{2}})\|_2 \| z^{k+\frac{1}{2}} - z^* \|_2 + \eta^2 \|\widetilde{g}(z^k) - \widetilde{g}(z^{k+\frac{1}{2}})\|_2^2 
    \\& \quad - \|z^k - z^{k+\frac{1}{2}}\|_2^2.
\end{align*}
We will need auxiliary inequality
\begin{align*}
    \|\widetilde{g}(z^{k+\frac{1}{2}}) - \widetilde{g}(z^{k})\|_2 & \leqslant \|g(z^{k+\frac{1}{2}}) - g(z^{k}) \|_2 + \|\widetilde{g}(z^{k}) - g(z^{k}) \|_2 + \|\widetilde{g}(z^{k+\frac{1}{2}}) - g(z^{k+\frac{1}{2}}) \|_2 \\
    & \leqslant \quad  L \|z^{k+\frac{1}{2}} - z^k \|_2 + \frac{\alpha}{1-\alpha}\|\widetilde{g}(z^k)\|_2 + \alpha \|g(z^{k+\frac{1}{2}})\|_2,
\end{align*}
to continue the main chain:
\begin{align*}
    \|z^{k+1}-z^{*}\|_2^2 & \leqslant \|z^k - z^* \|_2^2 - \eta \mu \| z^{k+\frac{1}{2}} - z^* \|_2^2 + \frac{\eta \alpha^2}{\mu} \|g(z^{k+\frac{1}{2}})\|_2^2 + 3 \eta^2 L^2 \| z^{k+\frac{1}{2}} - z^k \|_2^2  \\
    & \quad + \frac{3 \eta^2 \alpha^2}{(1- \alpha)^2} \|\widetilde{g}(z^k)\|_2^2 + 3 \eta^2 \alpha^2 \|g(z_{k+ \frac{1}{2}}) \|_2^2 - \|z^k - z^{k+\frac{1}{2}}\|_2^2 
    \\& = \|z^k - z^* \|_2^2 - \eta \mu \| z^{k+\frac{1}{2}} - z^* \|_2^2 + \left(\eta\frac{\alpha^2}{\mu} + 3\eta^2\alpha^2\right) \|g(z_{k+ \frac{1}{2}}) \|_2^2 \\
    & \quad + \left(3\eta^2L^2 + \frac{3\alpha^2}{(1-\alpha)^2} - 1\right)\| z^{k+\frac{1}{2}} - z^k \|_2^2. 
\end{align*}
Finally, by using
\begin{align*}
    \|g(z^{k+\frac{1}{2}}) \|_2 \leqslant L\|z^{k+\frac{1}{2}} - z^k \|_2 + \|g(z^k)\|_2 \leqslant \left(L + \frac{1}{\eta(1-\alpha)}\right)\|z^{k+\frac{1}{2}} - z^k\|_2
\end{align*}
we prove the Lemma.
\end{proof}

For convenience, we restate the \cref{teo:eg_alpha_upper_bound}.

%Extragradient: corollary for alpha gaurantee
\textbf{\cref{teo:eg_alpha_upper_bound}}
\textit{
Let \cref{as:lipqsm} hold. There exists $\widehat \alpha = b\sqrt{\nicefrac{\mu}{L}}$ for some numerical constant $b>0$ s.t. if Assumption \ref{as:ne-inexact} holds with $\alpha < \widehat \alpha$, then Algorithm \ref{alg:eg} with parameters $(z^0,\frac{1}{cL})$ with a numerical constant $c>0$, generates $z^N$ s.t.
\begin{equation*}
    \|z^N-z^*\|_2^2 \leqslant \|z^0-z^*\|_2^2\left( 1 - \frac{\mu}{2cL} \right)^N.
\end{equation*}
% For $g \in S_{\mu, L}$ (\cref{as:lipqsm}), $\exists \widehat \alpha$: $\widehat  \alpha = \mathcal{O}\left(\sqrt{\frac{\mu}{L}}\right)$, s.t. if $\alpha < \widehat \alpha$ then inexact EG retains linear convergence with contraction factor of $1 - \frac{1}{2}\eta\mu$, where $\eta \sim \frac{1}{L}$.
}

\begin{proof}[Proof of~\cref{teo:eg_alpha_upper_bound}]
Combining inequality
\begin{align*}
    -\|z^{k+\frac{1}{2}} - z^* \|_2^2 \leqslant -\frac{1}{2} \|z^{k} - z^* \|_2^2 + \|z^{k+\frac{1}{2}} - z^k \|_2^2
\end{align*}
with Lemma~\ref{lem:eg_contraction_stongly_monotone_Lipshitz}, we obtain
\begin{align*}
    &\quad\; \|z^{k+1}-z^{*}\|_2^2 
    \\& \leqslant \|z^{k}-z^{*}\|_2^2 - \eta\mu\left(\frac{1}{2} \|z^{k} - z^* \|_2^2 - \|z^{k+\frac{1}{2}} - z^k \|_2^2\right) 
    \\& \quad + \left[\left(\frac{\eta\alpha^2}{\mu} +3\eta^2\alpha^2\right) \left(2L^2+\frac{2}{\eta^2(1-\alpha)^2}\right)+3\eta^2L^2 + \frac{3\alpha^2}{(1-\alpha)^2} - 1 \right] \|z^{k+\frac{1}{2}}-z^{k}\|_2^2 \\
    &=(1-\frac{1}{2}\eta\mu) \|z^{k}-z^{*}\|_2^2
    \\& \quad + \left(\left(\frac{\eta\alpha^2}{\mu} +3\eta^2\alpha^2\right) \left(2L^2+\frac{2}{\eta^2(1-\alpha)^2}\right)+3\eta^2L^2 + \frac{3\alpha^2}{(1-\alpha)^2} + \eta\mu - 1 \right)\|z^{k+\frac{1}{2}}-z^{k}\|_2^2 \\
    &= (1-\frac{1}{2}\eta\mu) \|z^{k}-z^{*}\|_2^2 + \left(\mathcal{O}\left(1\right) - 1 \right)\|z^{k+\frac{1}{2}}-z^{k}\|_2^2 
    \\& \leqslant \left(1-\frac{1}{2}\eta\mu\right)\|z^{k}-z^{*}\|_2^2=\left(1 - \frac{\mu}{2cL}\right)\|z^{k}-z^{*}\|_2^2
\end{align*}
where we used that $\eta = \frac{1}{cL}$ and $\alpha < \widehat \alpha = b\sqrt{\frac{\mu}{L}}$ with an appropriate choice of constants $b,c>0$ and that $\frac{L}{\mu}$ is large. Hence, the contraction factor is $1 - \frac{\mu}{2cL}$, which proves the theorem.

% \valery{
We can also evaluate numerical multiplier of $\widehat \alpha$ for when $k = L/\mu$ is large. Rewriting the multiplier before $\|z^{k+\frac{1}{2}}-z^{k}\|_2^2$:
\begin{align*}
    \frac{3}{c^3} + \frac{2\alpha^2k}{c^2} - \frac{1 - \xi}{c} + \frac{2\alpha^2k}{(1-\alpha)^2}
\end{align*}
where
\begin{align*}
    \xi = \frac{6\alpha^2}{(1-\alpha)^2} \left(\frac{(1-\alpha)^2}{c^2}+1 \right) + \frac{3\alpha^2}{(1-\alpha)^2} + \frac{1}{ck} = o(1)
\end{align*}
to guarantee linear convergence we demand the value of the expression to be non-positive:
\begin{align*}
    \frac{3}{c^3} + \frac{2\alpha^2k}{c^2} - \frac{1}{c} + 2\alpha^2k + o(1) \leqslant 0
\end{align*}
From the expression it can be concluded that for large condition numbers, convergence is being guaranteed for $\alpha < \widehat \alpha$ where $\widehat \alpha$ is approaching $\left(\frac{1}{12}(2\sqrt{7}-5)\right)^{1/4} \sqrt{\frac{\mu}{L}} \approx 0.39478\sqrt{\frac{\mu}{L}}$ from below.

% }
\end{proof}

%all SP methods alpha threshold

%%%%%%%%%%%%%%%%%%%%%%%
\newpage

\section{Additional Experiments for Sections~\ref{section min max} and~\ref{sec:ne}}\label{appendix Lyapunov min max}

In this appendix we present the results of numerical experiments with automatic generation of Lyapunov functions. The graphs are the dependencies of best verified convergence rate on step size. In all experiments we fixed $\mu = 1$ and varied $L$ to get different condition numbers $\kappa = \frac{L}{\mu}$. Curves of different color correspond to different values of $\alpha$. Black dashed line corresponds to contraction factor equal to $1$. \par

\begin{figure}[H]
    \centering
    \begin{minipage}[htp]{0.77\textwidth}
        \centering
        \includegraphics[width=1\linewidth]{Lyapunov_saddle_new/SIM_GDA/sim_2.pdf}
        \caption{\centering Inexact Sim-GDA ($\kappa=2$)}
        \label{fig:lyapunov_sim2_app}
    \end{minipage}
\end{figure}
\begin{figure}[H]
    \centering    \begin{minipage}[htp]{0.77\textwidth}
        \centering
        \includegraphics[width=1\linewidth]{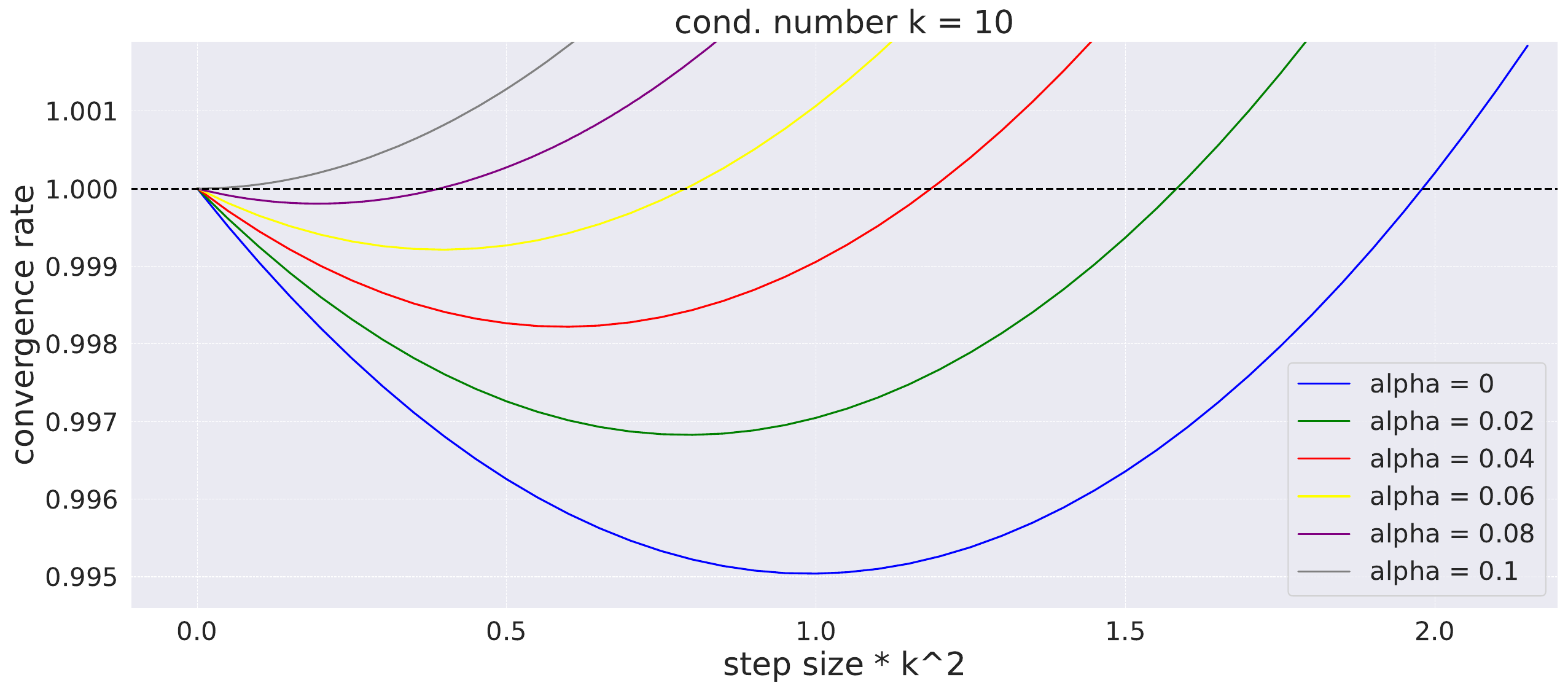}
        \caption{\centering Inexact Sim-GDA ($\kappa=10$)}
        \label{fig:lyapunov_sim10_app}
    \end{minipage}
\end{figure}
\begin{figure}[H]
    \centering   \begin{minipage}[htp]{0.77\textwidth}
        \centering
        \includegraphics[width=1\linewidth]{Lyapunov_saddle_new/SIM_GDA/sim_100.pdf}
        \caption{\centering Inexact Sim-GDA ($\kappa=100$)}
        \label{fig:lyapunov_sim100_app}
    \end{minipage}
\end{figure}

\begin{figure}[H]
    \centering
    \begin{minipage}[htp]{0.77\textwidth}
        \centering
        \includegraphics[width=1\linewidth]{Lyapunov_saddle_new/ALT_GDA/alt_2.pdf}
        \caption{\centering Inexact Alt-GDA ($\kappa=2$)}
        \label{fig:lyapunov_alt2_app}
    \end{minipage}
\end{figure}
\begin{figure}[H]
    \centering    \begin{minipage}[htp]{0.77\textwidth}
        \centering
        \includegraphics[width=1\linewidth]{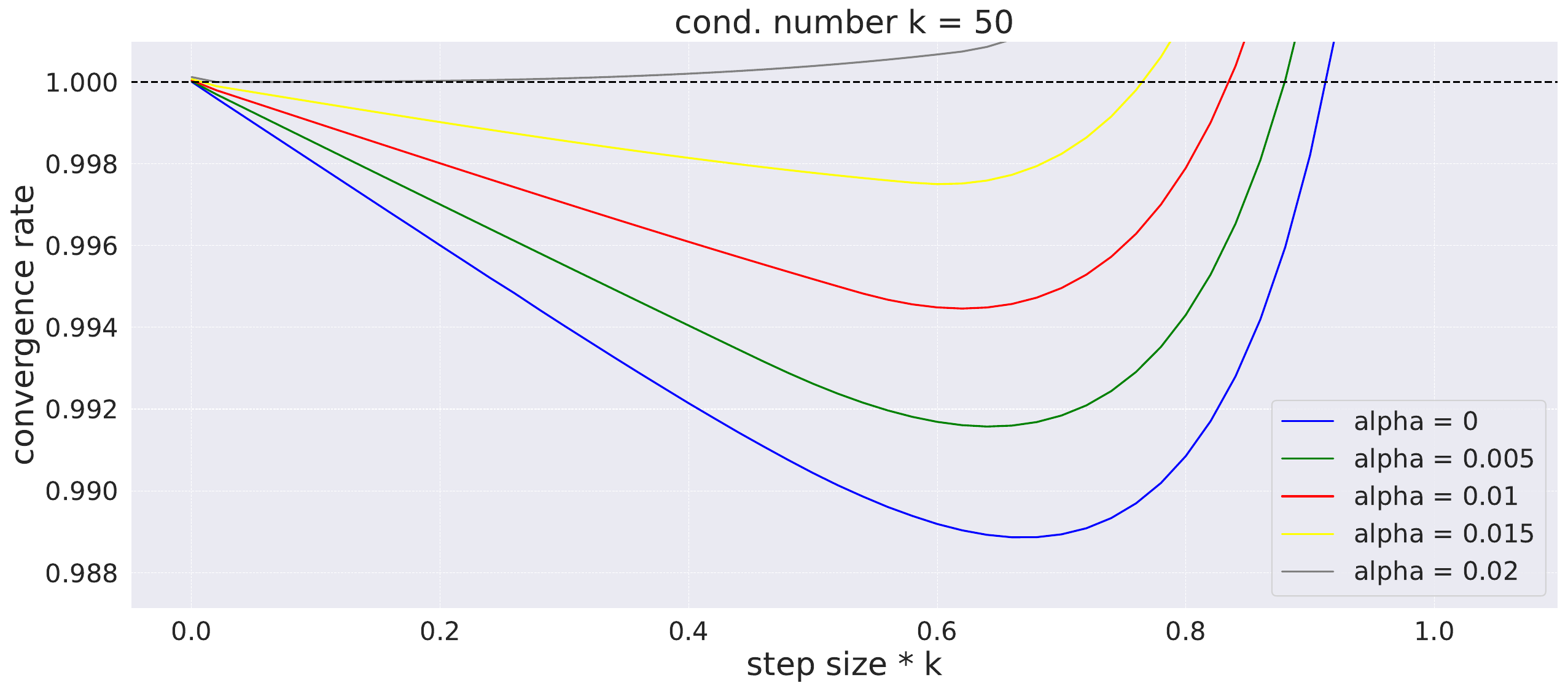}
        \caption{\centering Inexact Alt-GDA ($\kappa=50$)}
        \label{fig:lyapunov_alt50_app}
    \end{minipage}
\end{figure}
\begin{figure}[H]
    \centering    \begin{minipage}[htp]{0.77\textwidth}
        \centering
        \includegraphics[width=1\linewidth]{Lyapunov_saddle_new/ALT_GDA/alt_2.pdf}
        \caption{\centering Inexact Alt-GDA ($\kappa=100$)}
        \label{fig:lyapunov_alt100_app}
    \end{minipage}
\end{figure}
\begin{figure}[H]
    \centering    \begin{minipage}[htp]{0.77\textwidth}
        \centering
        \includegraphics[width=1\linewidth]{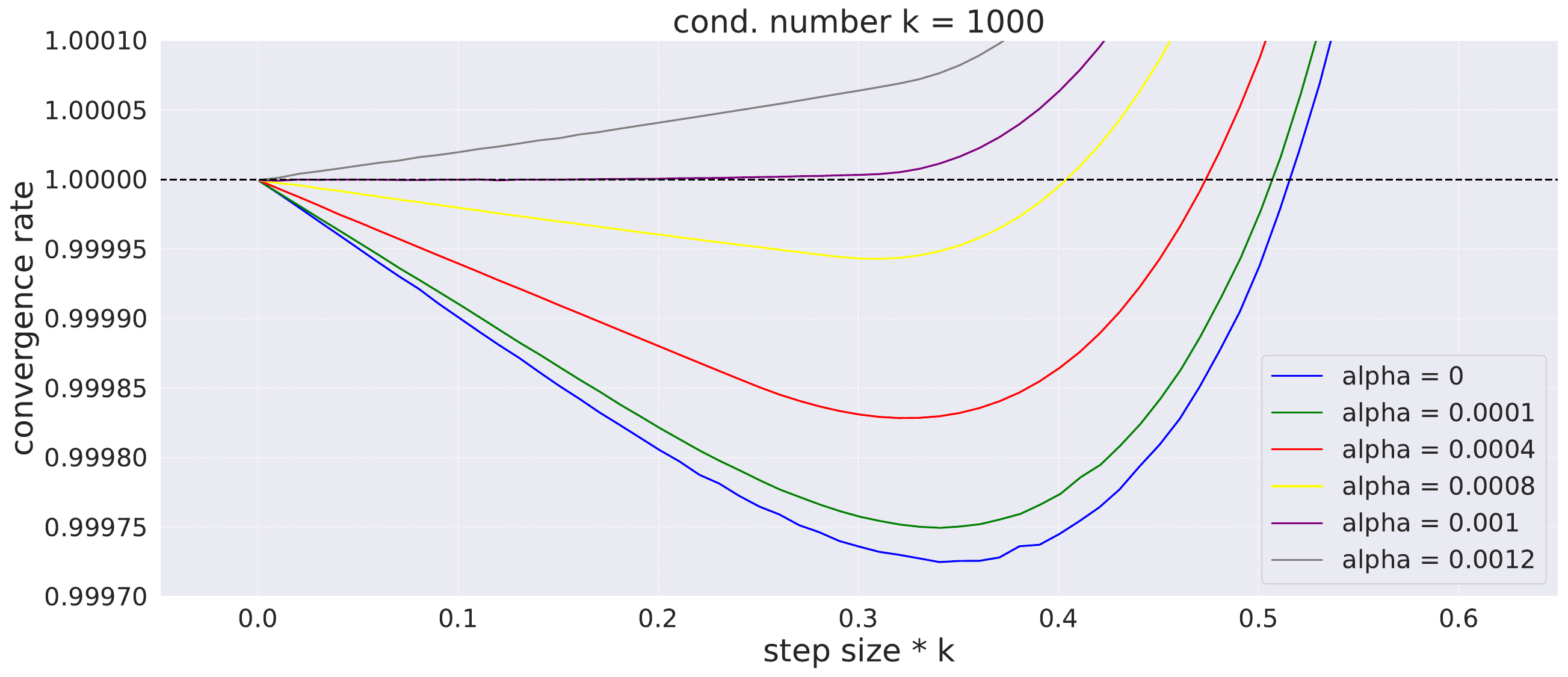}
        \caption{\centering Inexact Alt-GDA ($\kappa=1000$)}
        \label{fig:lyapunov_alt1000_app}
    \end{minipage}
\end{figure}

% \begin{figure}[H]
% \center{\includegraphics[width=1\linewidth]{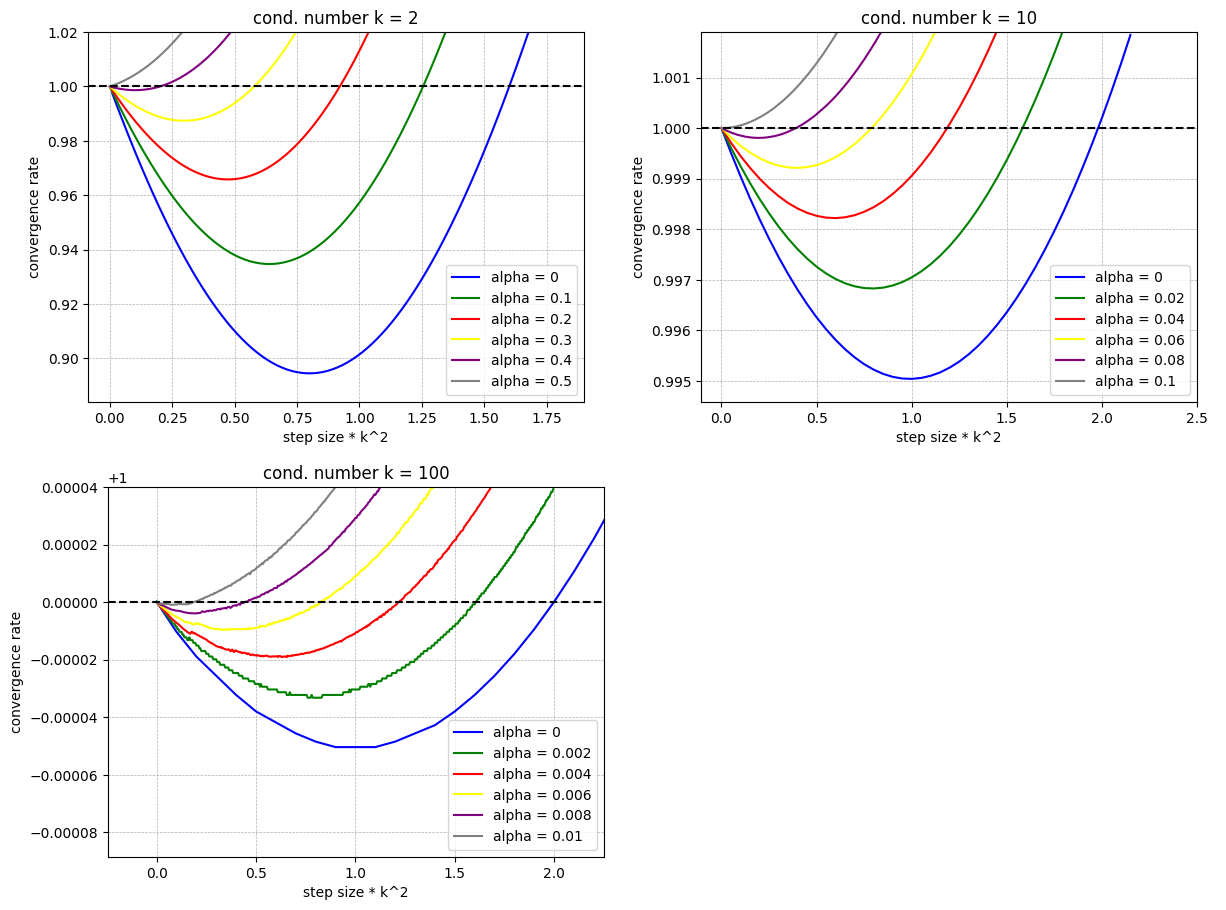}}
% \na{it would be easier to consistently adjust plots if all the plots was separate, so that one could stack them in one line to save some space in the main part see also plots below}
% \na{also yellow color is indistinguishable}
% \caption{\centering Inexact Sim-GDA}
% \label{fig:lyapunov_sim}
% \end{figure}

% \begin{figure}[H]
% \center{\includegraphics[width=1\linewidth]{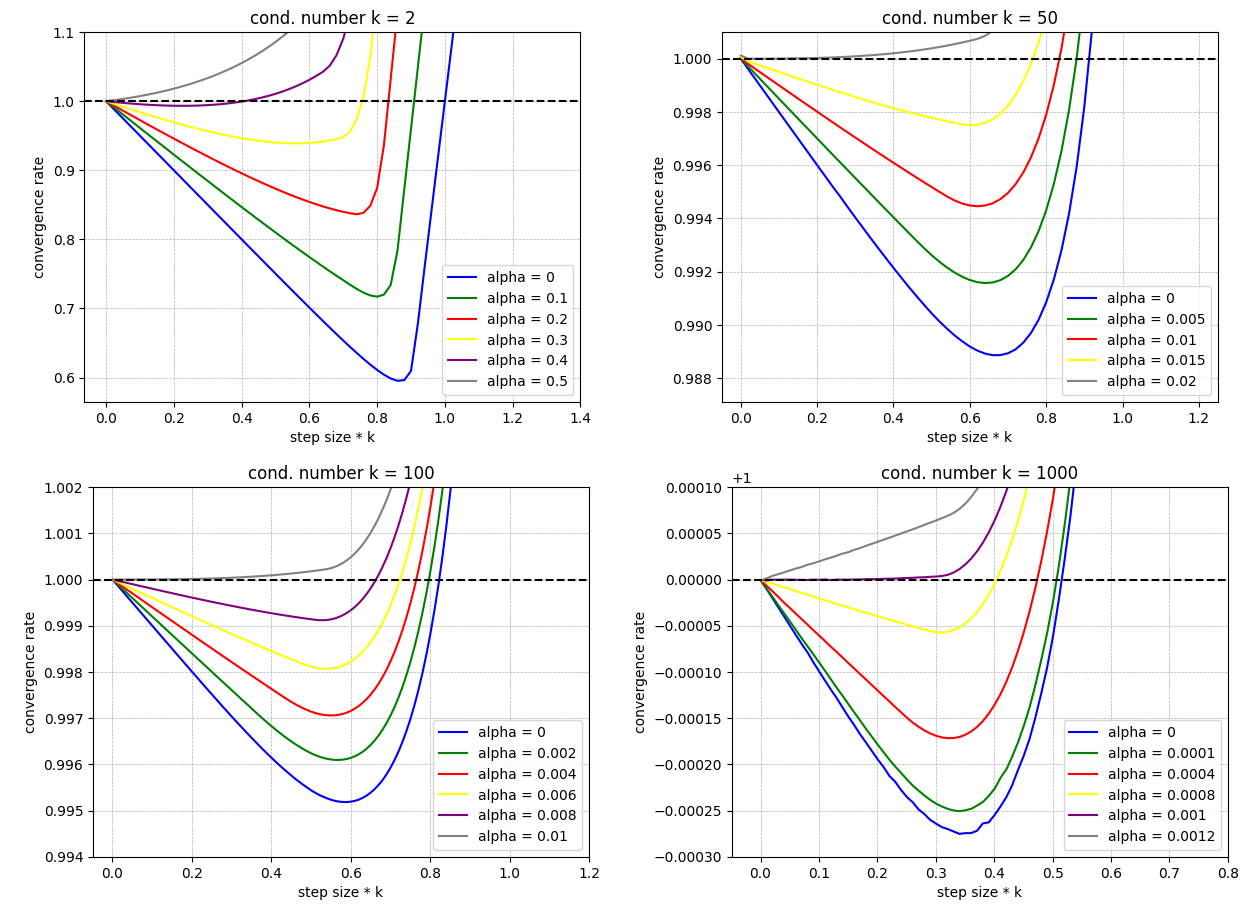}}
% \caption{\centering Inexact Alt-GDA}
% \label{fig:lyapunov_alt}
% \end{figure}

\begin{figure}[H]
    \centering
    \begin{minipage}[htp]{0.77\textwidth}
        \centering
        \includegraphics[width=1\linewidth]{Lyapunov_saddle_new/EG/extr_2.pdf}
        \caption{\centering Inexact EG($\kappa=2$)}
        \label{fig:lyapunov_eg2_app}
    \end{minipage}
\end{figure}
\begin{figure}[H]
    \centering    \begin{minipage}[htp]{0.77\textwidth}
        \centering
        \includegraphics[width=1\linewidth]{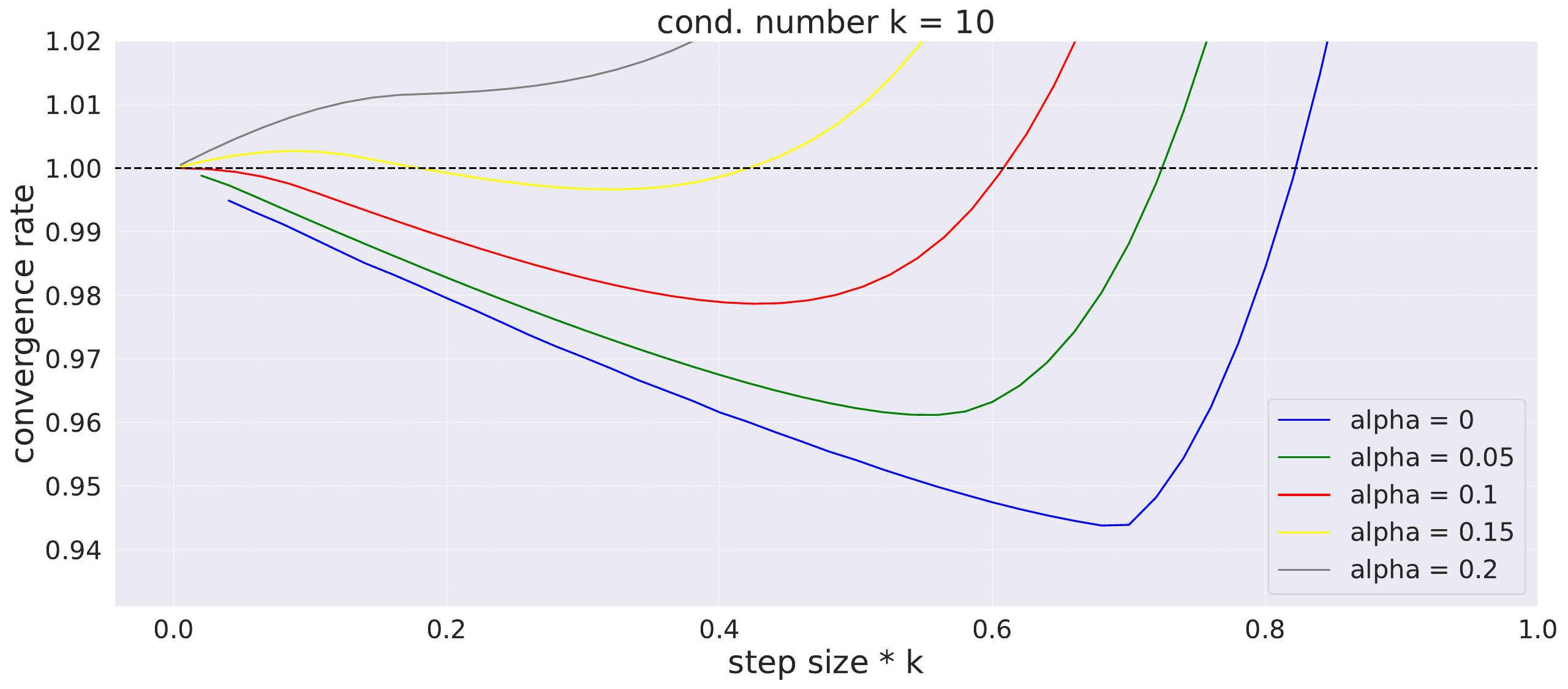}
        \caption{\centering Inexact EG($\kappa=10$)}
        \label{fig:lyapunov_eg10_app}
    \end{minipage}
\end{figure}
\begin{figure}[H]
    \centering    \begin{minipage}[htp]{0.77\textwidth}
        \centering
        \includegraphics[width=1\linewidth]{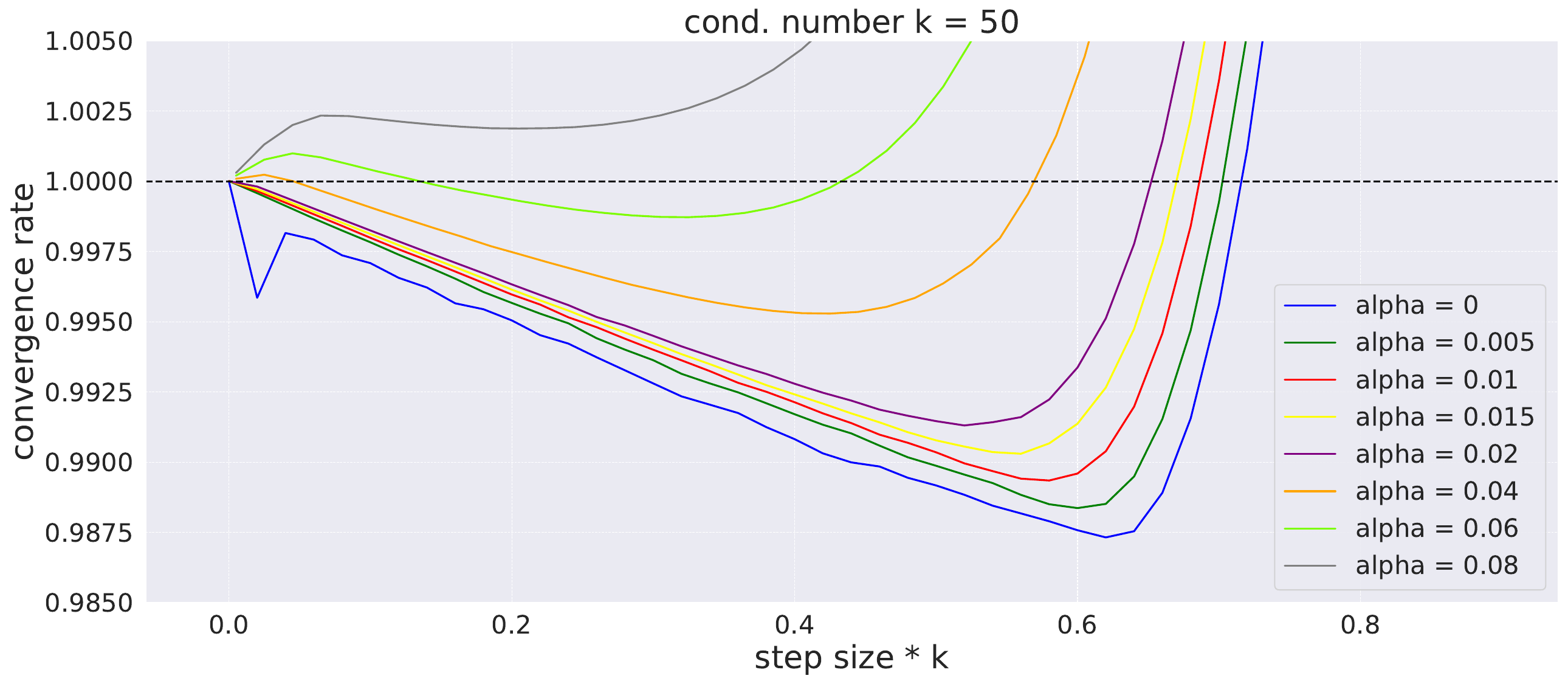}
        \caption{\centering Inexact EG($\kappa=50$)}
        \label{fig:lyapunov_eg50_app}
    \end{minipage}
\end{figure}
\begin{figure}[H]
    \centering
    \begin{minipage}[htp]{0.77\textwidth}
        \centering
        \includegraphics[width=1\linewidth]{Lyapunov_saddle_new/EG/extr_100.pdf}
        \caption{\centering Inexact EG($\kappa=100$)}
        \label{fig:lyapunov_eg100_app}
    \end{minipage}
\end{figure}
\begin{figure}[H]
    \centering
    \begin{minipage}[htp]{0.77\textwidth}
        \centering
        \includegraphics[width=1\linewidth]{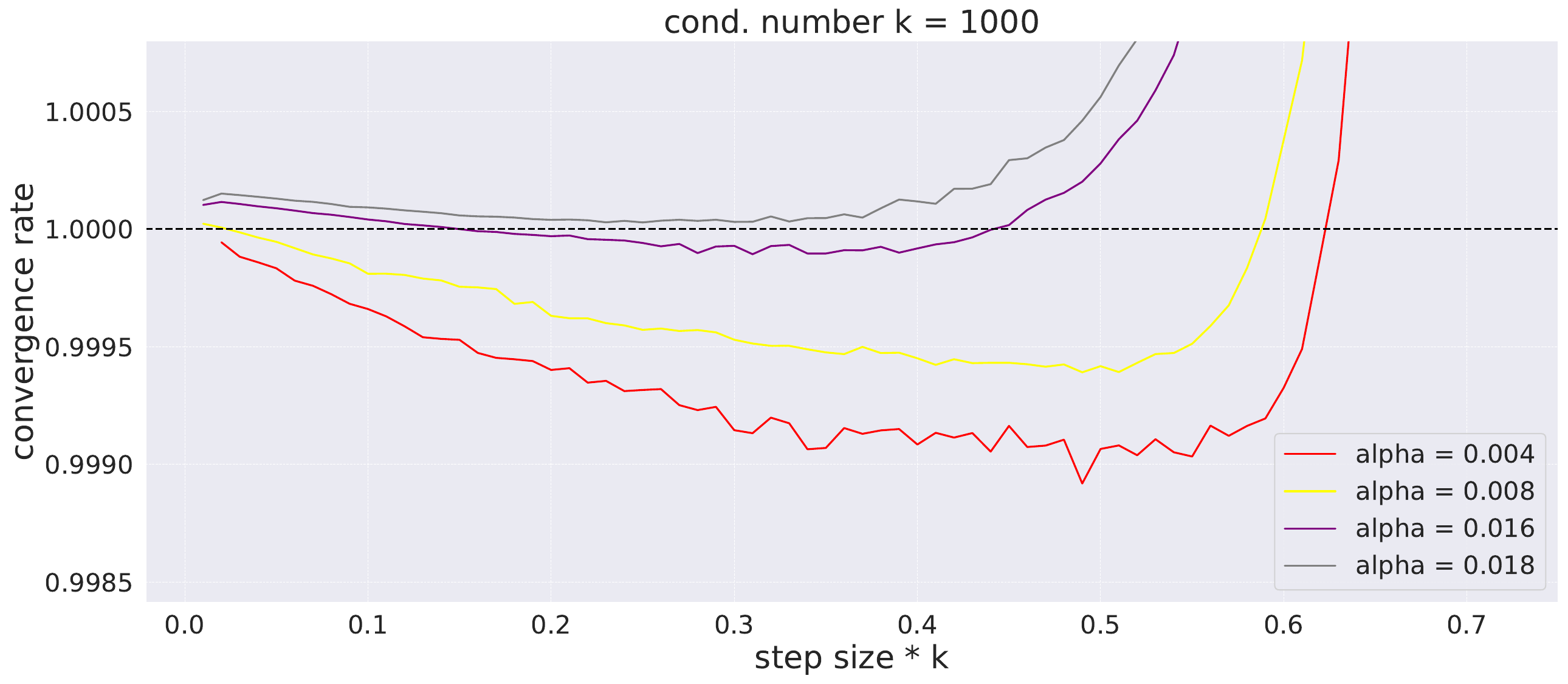}
        \caption{\centering Inexact EG($\kappa=1000$)}
        \label{fig:lyapunov_eg1000}
    \end{minipage}
\end{figure}

\end{appendices}

%%===========================================================================================%%
%% If you are submitting to one of the Nature Portfolio journals, using the eJP submission   %%
%% system, please include the references within the manuscript file itself. You may do this  %%
%% by copying the reference list from your .bbl file, paste it into the main manuscript .tex %%
%% file, and delete the associated \verb+\bibliography+ commands.                            %%
%%===========================================================================================%%
\bibliography{refs}% common bib file

%% BioMed_Central_Bib_Style_v1.01

\begin{thebibliography}{38}
% BibTex style file: bmc-mathphys.bst (version 2.1), 2014-07-24
\ifx \bisbn   \undefined \def \bisbn  #1{ISBN #1}\fi
\ifx \binits  \undefined \def \binits#1{#1}\fi
\ifx \bauthor  \undefined \def \bauthor#1{#1}\fi
\ifx \batitle  \undefined \def \batitle#1{#1}\fi
\ifx \bjtitle  \undefined \def \bjtitle#1{#1}\fi
\ifx \bvolume  \undefined \def \bvolume#1{\textbf{#1}}\fi
\ifx \byear  \undefined \def \byear#1{#1}\fi
\ifx \bissue  \undefined \def \bissue#1{#1}\fi
\ifx \bfpage  \undefined \def \bfpage#1{#1}\fi
\ifx \blpage  \undefined \def \blpage #1{#1}\fi
\ifx \burl  \undefined \def \burl#1{\textsf{#1}}\fi
\ifx \doiurl  \undefined \def \doiurl#1{\url{https://doi.org/#1}}\fi
\ifx \betal  \undefined \def \betal{\textit{et al.}}\fi
\ifx \binstitute  \undefined \def \binstitute#1{#1}\fi
\ifx \binstitutionaled  \undefined \def \binstitutionaled#1{#1}\fi
\ifx \bctitle  \undefined \def \bctitle#1{#1}\fi
\ifx \beditor  \undefined \def \beditor#1{#1}\fi
\ifx \bpublisher  \undefined \def \bpublisher#1{#1}\fi
\ifx \bbtitle  \undefined \def \bbtitle#1{#1}\fi
\ifx \bedition  \undefined \def \bedition#1{#1}\fi
\ifx \bseriesno  \undefined \def \bseriesno#1{#1}\fi
\ifx \blocation  \undefined \def \blocation#1{#1}\fi
\ifx \bsertitle  \undefined \def \bsertitle#1{#1}\fi
\ifx \bsnm \undefined \def \bsnm#1{#1}\fi
\ifx \bsuffix \undefined \def \bsuffix#1{#1}\fi
\ifx \bparticle \undefined \def \bparticle#1{#1}\fi
\ifx \barticle \undefined \def \barticle#1{#1}\fi
\bibcommenthead
\ifx \bconfdate \undefined \def \bconfdate #1{#1}\fi
\ifx \botherref \undefined \def \botherref #1{#1}\fi
\ifx \url \undefined \def \url#1{\textsf{#1}}\fi
\ifx \bchapter \undefined \def \bchapter#1{#1}\fi
\ifx \bbook \undefined \def \bbook#1{#1}\fi
\ifx \bcomment \undefined \def \bcomment#1{#1}\fi
\ifx \oauthor \undefined \def \oauthor#1{#1}\fi
\ifx \citeauthoryear \undefined \def \citeauthoryear#1{#1}\fi
\ifx \endbibitem  \undefined \def \endbibitem {}\fi
\ifx \bconflocation  \undefined \def \bconflocation#1{#1}\fi
\ifx \arxivurl  \undefined \def \arxivurl#1{\textsf{#1}}\fi
\csname PreBibitemsHook\endcsname

%%% 1
\bibitem[\protect\citeauthoryear{Std754}{2019}]{IEEEStd}
\begin{botherref}
\oauthor{\bsnm{Std754}}:
Ieee standard for floating-point arithmetic.
IEEE Std 754-2019 (Revision of IEEE 754-2008),
1--84
(2019)
\doiurl{10.1109/IEEESTD.2019.8766229}
\end{botherref}
\endbibitem

%%% 2
\bibitem[\protect\citeauthoryear{Overton}{2001}]{overton2001numerical}
\begin{bbook}
\bauthor{\bsnm{Overton}, \binits{M.L.}}:
\bbtitle{Numerical Computing with IEEE Floating Point Arithmetic}.
\bpublisher{SIAM}, \blocation{???}
(\byear{2001})
\end{bbook}
\endbibitem

%%% 3
\bibitem[\protect\citeauthoryear{Alistarh et~al.}{2017}]{alistarh2017qsgd}
\begin{botherref}
\oauthor{\bsnm{Alistarh}, \binits{D.}},
\oauthor{\bsnm{Grubic}, \binits{D.}},
\oauthor{\bsnm{Li}, \binits{J.}},
\oauthor{\bsnm{Tomioka}, \binits{R.}},
\oauthor{\bsnm{Vojnovic}, \binits{M.}}:
Qsgd: Communication-efficient sgd via gradient quantization and encoding.
Advances in neural information processing systems
\textbf{30}
(2017)
\end{botherref}
\endbibitem

%%% 4
\bibitem[\protect\citeauthoryear{Gholami et~al.}{2022}]{gholami2022survey}
\begin{bchapter}
\bauthor{\bsnm{Gholami}, \binits{A.}},
\bauthor{\bsnm{Kim}, \binits{S.}},
\bauthor{\bsnm{Dong}, \binits{Z.}},
\bauthor{\bsnm{Yao}, \binits{Z.}},
\bauthor{\bsnm{Mahoney}, \binits{M.W.}},
\bauthor{\bsnm{Keutzer}, \binits{K.}}:
\bctitle{A survey of quantization methods for efficient neural network inference}.
In: \bbtitle{Low-Power Computer Vision},
pp. \bfpage{291}--\blpage{326}.
\bpublisher{Chapman and Hall/CRC}, \blocation{???}
(\byear{2022})
\end{bchapter}
\endbibitem

%%% 5
\bibitem[\protect\citeauthoryear{Rabbat and Nowak}{2005}]{rabbat2005quantized}
\begin{barticle}
\bauthor{\bsnm{Rabbat}, \binits{M.G.}},
\bauthor{\bsnm{Nowak}, \binits{R.D.}}:
\batitle{Quantized incremental algorithms for distributed optimization}.
\bjtitle{IEEE Journal on Selected Areas in Communications}
\bvolume{23}(\bissue{4}),
\bfpage{798}--\blpage{808}
(\byear{2005})
\end{barticle}
\endbibitem

%%% 6
\bibitem[\protect\citeauthoryear{Yun et~al.}{2024}]{yun2024standalone16bittrainingmissing}
\begin{botherref}
\oauthor{\bsnm{Yun}, \binits{J.}},
\oauthor{\bsnm{Choi}, \binits{S.}},
\oauthor{\bsnm{Rameau}, \binits{F.}},
\oauthor{\bsnm{Kang}, \binits{B.}},
\oauthor{\bsnm{Fu}, \binits{Z.}}:
Standalone 16-bit Training: Missing Study for Hardware-Limited Deep Learning Practitioners
(2024).
\url{https://arxiv.org/abs/2305.10947}
\end{botherref}
\endbibitem

%%% 7
\bibitem[\protect\citeauthoryear{Polyak}{1987}]{polyak1987introduction}
\begin{botherref}
\oauthor{\bsnm{Polyak}, \binits{B.T.}}:
Introduction to optimization
(1987)
\end{botherref}
\endbibitem

%%% 8
\bibitem[\protect\citeauthoryear{Baraldi and Kouri}{2023}]{baraldi2023proximal}
\begin{barticle}
\bauthor{\bsnm{Baraldi}, \binits{R.J.}},
\bauthor{\bsnm{Kouri}, \binits{D.P.}}:
\batitle{A proximal trust-region method for nonsmooth optimization with inexact function and gradient evaluations}.
\bjtitle{Mathematical Programming}
\bvolume{201}(\bissue{1-2}),
\bfpage{559}--\blpage{598}
(\byear{2023})
\end{barticle}
\endbibitem

%%% 9
\bibitem[\protect\citeauthoryear{Hinterm{\"u}ller and Stengl}{2020}]{hintermuller2020convexity}
\begin{bbook}
\bauthor{\bsnm{Hinterm{\"u}ller}, \binits{M.}},
\bauthor{\bsnm{Stengl}, \binits{S.-M.}}:
\bbtitle{On the Convexity of Optimal Control Problems Involving Non-linear PDEs or VIs and Applications to Nash Games}.
\bpublisher{Weierstra{\ss}-Institut f{\"u}r Angewandte Analysis und Stochastik Leibniz-Institut~…}, \blocation{???}
(\byear{2020})
\end{bbook}
\endbibitem

%%% 10
\bibitem[\protect\citeauthoryear{Solodov}{2007}]{solodov2007explicit}
\begin{barticle}
\bauthor{\bsnm{Solodov}, \binits{M.}}:
\batitle{An explicit descent method for bilevel convex optimization}.
\bjtitle{Journal of Convex Analysis}
\bvolume{14}(\bissue{2}),
\bfpage{227}
(\byear{2007})
\end{barticle}
\endbibitem

%%% 11
\bibitem[\protect\citeauthoryear{Sabach and Shtern}{2017}]{sabach2017first}
\begin{barticle}
\bauthor{\bsnm{Sabach}, \binits{S.}},
\bauthor{\bsnm{Shtern}, \binits{S.}}:
\batitle{A first order method for solving convex bilevel optimization problems}.
\bjtitle{SIAM Journal on Optimization}
\bvolume{27}(\bissue{2}),
\bfpage{640}--\blpage{660}
(\byear{2017})
\end{barticle}
\endbibitem

%%% 12
\bibitem[\protect\citeauthoryear{Petrulionyt{\.{e}} et~al.}{2024}]{petrulionyte2024functional}
\begin{bchapter}
\bauthor{\bsnm{Petrulionyt{\.{e}}}, \binits{I.}},
\bauthor{\bsnm{Mairal}, \binits{J.}},
\bauthor{\bsnm{Arbel}, \binits{M.}}:
\bctitle{Functional bilevel optimization for machine learning}.
In: \bbtitle{The Thirty-eighth Annual Conference on Neural Information Processing Systems}
(\byear{2024}).
\bcomment{arXiv:2403.20233}.
\burl{https://openreview.net/forum?id=enlxHLwwFf}
\end{bchapter}
\endbibitem

%%% 13
\bibitem[\protect\citeauthoryear{Matyukhin et~al.}{2021}]{matyukhin2021convex}
\begin{bchapter}
\bauthor{\bsnm{Matyukhin}, \binits{V.}},
\bauthor{\bsnm{Kabanikhin}, \binits{S.}},
\bauthor{\bsnm{Shishlenin}, \binits{M.}},
\bauthor{\bsnm{Novikov}, \binits{N.}},
\bauthor{\bsnm{Vasin}, \binits{A.}},
\bauthor{\bsnm{Gasnikov}, \binits{A.}}:
\bctitle{Convex optimization with inexact gradients in hilbert space and applications to elliptic inverse problems}.
In: \bbtitle{International Conference on Mathematical Optimization Theory and Operations Research},
pp. \bfpage{159}--\blpage{175}
(\byear{2021}).
\bcomment{Springer}
\end{bchapter}
\endbibitem

%%% 14
\bibitem[\protect\citeauthoryear{Berahas et~al.}{2021}]{berahas2021theoretical}
\begin{botherref}
\oauthor{\bsnm{Berahas}, \binits{A.S.}},
\oauthor{\bsnm{Cao}, \binits{L.}},
\oauthor{\bsnm{Choromanski}, \binits{K.}},
\oauthor{\bsnm{Scheinberg}, \binits{K.}}:
A theoretical and empirical comparison of gradient approximations in derivative-free optimization.
Foundations of Computational Mathematics,
1--54
(2021)
\end{botherref}
\endbibitem

%%% 15
\bibitem[\protect\citeauthoryear{Lin et~al.}{2018}]{lin2018catalyst}
\begin{barticle}
\bauthor{\bsnm{Lin}, \binits{H.}},
\bauthor{\bsnm{Mairal}, \binits{J.}},
\bauthor{\bsnm{Harchaoui}, \binits{Z.}}:
\batitle{Catalyst acceleration for first-order convex optimization: from theory to practice}.
\bjtitle{Journal of Machine Learning Research}
\bvolume{18}(\bissue{212}),
\bfpage{1}--\blpage{54}
(\byear{2018})
\end{barticle}
\endbibitem

%%% 16
\bibitem[\protect\citeauthoryear{Kovalev and Gasnikov}{2022}]{kovalev2022first}
\begin{barticle}
\bauthor{\bsnm{Kovalev}, \binits{D.}},
\bauthor{\bsnm{Gasnikov}, \binits{A.}}:
\batitle{The first optimal acceleration of high-order methods in smooth convex optimization}.
\bjtitle{Advances in Neural Information Processing Systems}
\bvolume{35},
\bfpage{35339}--\blpage{35351}
(\byear{2022})
\end{barticle}
\endbibitem

%%% 17
\bibitem[\protect\citeauthoryear{Devolder et~al.}{2013}]{devolder2013intermediate}
\begin{botherref}
\oauthor{\bsnm{Devolder}, \binits{O.}},
\oauthor{\bsnm{Glineur}, \binits{F.}},
\oauthor{\bsnm{Nesterov}, \binits{Y.}}, et al.:
Intermediate gradient methods for smooth convex problems with inexact oracle.
Technical report,
Technical report, CORE-2013017
(2013)
\end{botherref}
\endbibitem

%%% 18
\bibitem[\protect\citeauthoryear{Stonyakin et~al.}{2021}]{stonyakin2021inexact}
\begin{barticle}
\bauthor{\bsnm{Stonyakin}, \binits{F.}},
\bauthor{\bsnm{Tyurin}, \binits{A.}},
\bauthor{\bsnm{Gasnikov}, \binits{A.}},
\bauthor{\bsnm{Dvurechensky}, \binits{P.}},
\bauthor{\bsnm{Agafonov}, \binits{A.}},
\bauthor{\bsnm{Dvinskikh}, \binits{D.}},
\bauthor{\bsnm{Alkousa}, \binits{M.}},
\bauthor{\bsnm{Pasechnyuk}, \binits{D.}},
\bauthor{\bsnm{Artamonov}, \binits{S.}},
\bauthor{\bsnm{Piskunova}, \binits{V.}}:
\batitle{Inexact model: A framework for optimization and variational inequalities}.
\bjtitle{Optimization Methods and Software}
(\byear{2021})
\doiurl{10.1080/10556788.2021.1924714}
\end{barticle}
\endbibitem

%%% 19
\bibitem[\protect\citeauthoryear{Vasin et~al.}{2023}]{vasin2023accelerated}
\begin{barticle}
\bauthor{\bsnm{Vasin}, \binits{A.}},
\bauthor{\bsnm{Gasnikov}, \binits{A.}},
\bauthor{\bsnm{Dvurechensky}, \binits{P.}},
\bauthor{\bsnm{Spokoiny}, \binits{V.}}:
\batitle{Accelerated gradient methods with absolute and relative noise in the gradient}.
\bjtitle{Optimization Methods and Software}
\bvolume{38}(\bissue{6}),
\bfpage{1180}--\blpage{1229}
(\byear{2023})
\end{barticle}
\endbibitem

%%% 20
\bibitem[\protect\citeauthoryear{Kornilov et~al.}{2023}]{kornilov2023intermediate}
\begin{botherref}
\oauthor{\bsnm{Kornilov}, \binits{N.}},
\oauthor{\bsnm{Gorbunov}, \binits{E.}},
\oauthor{\bsnm{Alkousa}, \binits{M.}},
\oauthor{\bsnm{Stonyakin}, \binits{F.}},
\oauthor{\bsnm{Dvurechensky}, \binits{P.}},
\oauthor{\bsnm{Gasnikov}, \binits{A.}}:
Intermediate gradient methods with relative inexactness.
arXiv preprint arXiv:2310.00506
(2023)
\end{botherref}
\endbibitem

%%% 21
\bibitem[\protect\citeauthoryear{Hallak and Levy}{2024}]{hallak24study}
\begin{bchapter}
\bauthor{\bsnm{Hallak}, \binits{N.}},
\bauthor{\bsnm{Levy}, \binits{K.Y.}}:
\bctitle{A study of first-order methods with a deterministic relative-error gradient oracle}.
In: \beditor{\bsnm{Salakhutdinov}, \binits{R.}},
\beditor{\bsnm{Kolter}, \binits{Z.}},
\beditor{\bsnm{Heller}, \binits{K.}},
\beditor{\bsnm{Weller}, \binits{A.}},
\beditor{\bsnm{Oliver}, \binits{N.}},
\beditor{\bsnm{Scarlett}, \binits{J.}},
\beditor{\bsnm{Berkenkamp}, \binits{F.}} (eds.)
\bbtitle{Proceedings of the 41st International Conference on Machine Learning}.
\bsertitle{Proceedings of Machine Learning Research},
vol. \bseriesno{235},
pp. \bfpage{17313}--\blpage{17332}.
\bpublisher{PMLR}, \blocation{???}
(\byear{2024}).
\burl{https://proceedings.mlr.press/v235/hallak24a.html}
\end{bchapter}
\endbibitem

%%% 22
\bibitem[\protect\citeauthoryear{Bertsekas}{1997}]{bertsekas1997nonlinear}
\begin{barticle}
\bauthor{\bsnm{Bertsekas}, \binits{D.P.}}:
\batitle{Nonlinear programming}.
\bjtitle{Journal of the Operational Research Society}
\bvolume{48}(\bissue{3}),
\bfpage{334}--\blpage{334}
(\byear{1997})
\end{barticle}
\endbibitem

%%% 23
\bibitem[\protect\citeauthoryear{Vernimmen and Glineur}{2025}]{vernimmen2025worst}
\begin{botherref}
\oauthor{\bsnm{Vernimmen}, \binits{P.}},
\oauthor{\bsnm{Glineur}, \binits{F.}}:
Worst-case convergence analysis of relatively inexact gradient descent on smooth convex functions.
arXiv preprint arXiv:2506.17145
(2025)
\end{botherref}
\endbibitem

%%% 24
\bibitem[\protect\citeauthoryear{Gannot}{2022}]{gannot2021frequency}
\begin{barticle}
\bauthor{\bsnm{Gannot}, \binits{O.}}:
\batitle{A frequency-domain analysis of inexact gradient methods}.
\bjtitle{Mathematical Programming}
\bvolume{194}(\bissue{1}),
\bfpage{975}--\blpage{1016}
(\byear{2022})
\doiurl{10.1007/s10107-021-01665-8}
\end{barticle}
\endbibitem

%%% 25
\bibitem[\protect\citeauthoryear{Tran-Dinh and Nguyen-Trung}{2024}]{tran-dinh2024extragradient}
\begin{barticle}
\bauthor{\bsnm{Tran-Dinh}, \binits{Q.}},
\bauthor{\bsnm{Nguyen-Trung}, \binits{N.}}:
\batitle{Revisiting extragradient-type methods -- part 1: Generalizations and sublinear convergence rates}.
\bjtitle{arXiv preprint arXiv:2409.16859}
(\byear{2024})
\doiurl{10.48550/arXiv.2409.16859}
{\href{https://arxiv.org/abs/2409.16859}{{arXiv:2409.16859}}}
{[math.OC]}
\end{barticle}
\endbibitem

%%% 26
\bibitem[\protect\citeauthoryear{Tran-Dinh and Nguyen-Trung}{2025}]{tran-dinh2025accelerated}
\begin{barticle}
\bauthor{\bsnm{Tran-Dinh}, \binits{Q.}},
\bauthor{\bsnm{Nguyen-Trung}, \binits{N.}}:
\batitle{Accelerated extragradient-type methods -- part 2: Generalization and sublinear convergence rates under co-hypomonotonicity}.
\bjtitle{arXiv preprint arXiv:2501.04585}
(\byear{2025})
\doiurl{10.48550/arXiv.2501.04585}
{\href{https://arxiv.org/abs/2501.04585}{{arXiv:2501.04585}}}
{[math.OC]}
\end{barticle}
\endbibitem

%%% 27
\bibitem[\protect\citeauthoryear{Cohen et~al.}{2020}]{Cohen2020RelativeLI}
\begin{bchapter}
\bauthor{\bsnm{Cohen}, \binits{M.B.}},
\bauthor{\bsnm{Sidford}, \binits{A.}},
\bauthor{\bsnm{Tian}, \binits{K.}}:
\bctitle{Relative lipschitzness in extragradient methods and a direct recipe for acceleration}.
In: \bbtitle{Information Technology Convergence and Services}
(\byear{2020}).
\burl{https://api.semanticscholar.org/CorpusID:226306910}
\end{bchapter}
\endbibitem

%%% 28
\bibitem[\protect\citeauthoryear{Nesterov}{2018}]{nesterov2018lectures}
\begin{bbook}
\bauthor{\bsnm{Nesterov}, \binits{Y.}}:
\bbtitle{Lectures on Convex Optimization}
vol. \bseriesno{137}.
\bpublisher{Springer}, \blocation{???}
(\byear{2018})
\end{bbook}
\endbibitem

%%% 29
\bibitem[\protect\citeauthoryear{Berahas et~al.}{2022}]{berahas2022theoretical}
\begin{barticle}
\bauthor{\bsnm{Berahas}, \binits{A.S.}},
\bauthor{\bsnm{Cao}, \binits{L.}},
\bauthor{\bsnm{Choromanski}, \binits{K.}},
\bauthor{\bsnm{Scheinberg}, \binits{K.}}:
\batitle{A theoretical and empirical comparison of gradient approximations in derivative-free optimization}.
\bjtitle{Foundations of Computational Mathematics}
\bvolume{22}(\bissue{2}),
\bfpage{507}--\blpage{560}
(\byear{2022})
\end{barticle}
\endbibitem

%%% 30
\bibitem[\protect\citeauthoryear{Vaswani et~al.}{2019}]{vaswani2019fast}
\begin{bchapter}
\bauthor{\bsnm{Vaswani}, \binits{S.}},
\bauthor{\bsnm{Bach}, \binits{F.}},
\bauthor{\bsnm{Schmidt}, \binits{M.}}:
\bctitle{Fast and faster convergence of sgd for over-parameterized models and an accelerated perceptron}.
In: \bbtitle{The 22nd International Conference on Artificial Intelligence and Statistics},
pp. \bfpage{1195}--\blpage{1204}
(\byear{2019}).
\bcomment{PMLR}
\end{bchapter}
\endbibitem

%%% 31
\bibitem[\protect\citeauthoryear{Lee et~al.}{2024}]{lee2024fundamentalbenefitalternatingupdates}
\begin{botherref}
\oauthor{\bsnm{Lee}, \binits{J.}},
\oauthor{\bsnm{Cho}, \binits{H.}},
\oauthor{\bsnm{Yun}, \binits{C.}}:
Fundamental Benefit of Alternating Updates in Minimax Optimization
(2024).
\url{https://arxiv.org/abs/2402.10475}
\end{botherref}
\endbibitem

%%% 32
\bibitem[\protect\citeauthoryear{Goujaud et~al.}{2022}]{pepit2022}
\begin{botherref}
\oauthor{\bsnm{Goujaud}, \binits{B.}},
\oauthor{\bsnm{Moucer}, \binits{C.}},
\oauthor{\bsnm{Glineur}, \binits{F.}},
\oauthor{\bsnm{Hendrickx}, \binits{J.}},
\oauthor{\bsnm{Taylor}, \binits{A.}},
\oauthor{\bsnm{Dieuleveut}, \binits{A.}}:
{PEPit}: computer-assisted worst-case analyses of first-order optimization methods in {P}ython.
arXiv preprint arXiv:2201.04040
(2022)
\end{botherref}
\endbibitem

%%% 33
\bibitem[\protect\citeauthoryear{Drori and Teboulle}{2014}]{drori2014performance}
\begin{barticle}
\bauthor{\bsnm{Drori}, \binits{Y.}},
\bauthor{\bsnm{Teboulle}, \binits{M.}}:
\batitle{Performance of first-order methods for smooth convex minimization: a novel approach}.
\bjtitle{Mathematical Programming}
\bvolume{145}(\bissue{1}),
\bfpage{451}--\blpage{482}
(\byear{2014})
\end{barticle}
\endbibitem

%%% 34
\bibitem[\protect\citeauthoryear{Taylor et~al.}{2018}]{pmlr-v80-taylor18a}
\begin{bchapter}
\bauthor{\bsnm{Taylor}, \binits{A.}},
\bauthor{\bsnm{Van~Scoy}, \binits{B.}},
\bauthor{\bsnm{Lessard}, \binits{L.}}:
\bctitle{{L}yapunov functions for first-order methods: Tight automated convergence guarantees}.
In: \beditor{\bsnm{Dy}, \binits{J.}},
\beditor{\bsnm{Krause}, \binits{A.}} (eds.)
\bbtitle{Proceedings of the 35th International Conference on Machine Learning}.
\bsertitle{Proceedings of Machine Learning Research},
vol. \bseriesno{80},
pp. \bfpage{4897}--\blpage{4906}.
\bpublisher{PMLR}, \blocation{???}
(\byear{2018}).
\burl{https://proceedings.mlr.press/v80/taylor18a.html}
\end{bchapter}
\endbibitem

%%% 35
\bibitem[\protect\citeauthoryear{Krivchenko et~al.}{2024}]{Krivchenko_Gasnikov_kovalev_2024}
\begin{botherref}
\oauthor{\bsnm{Krivchenko}, \binits{V.O.}},
\oauthor{\bsnm{Gasnikov}, \binits{A.V.}},
\oauthor{\bsnm{Kovalev}, \binits{D.A.}}:
Convex-Concave Interpolation and Application of PEP to the Bilinear-Coupled Saddle Point Problem.
Izhevsk Institute of Computer Science
(2024).
\doiurl{10.20537/nd241215} .
\url{http://dx.doi.org/10.20537/nd241215}
\end{botherref}
\endbibitem

%%% 36
\bibitem[\protect\citeauthoryear{Taylor et~al.}{2016}]{Taylor2016}
\begin{barticle}
\bauthor{\bsnm{Taylor}, \binits{A.B.}},
\bauthor{\bsnm{Hendrickx}, \binits{J.M.}},
\bauthor{\bsnm{Glineur}, \binits{F.}}:
\batitle{Smooth strongly convex interpolation and exact worst-case performance of first-order methods}.
\bjtitle{Mathematical Programming}
\bvolume{161}(\bissue{1–2}),
\bfpage{307}--\blpage{345}
(\byear{2016})
\doiurl{10.1007/s10107-016-1009-3}
\end{barticle}
\endbibitem

%%% 37
\bibitem[\protect\citeauthoryear{Bousselmi et~al.}{2024}]{Bousselmi2024}
\begin{barticle}
\bauthor{\bsnm{Bousselmi}, \binits{N.}},
\bauthor{\bsnm{Hendrickx}, \binits{J.M.}},
\bauthor{\bsnm{Glineur}, \binits{F.}}:
\batitle{Interpolation conditions for linear operators and applications to performance estimation problems}.
\bjtitle{SIAM Journal on Optimization}
\bvolume{34}(\bissue{3}),
\bfpage{3033}--\blpage{3063}
(\byear{2024})
\doiurl{10.1137/23m1575391}
\end{barticle}
\endbibitem

%%% 38
\bibitem[\protect\citeauthoryear{Jin et~al.}{2022}]{DBLP:journals/corr/abs-2202-04640}
\begin{botherref}
\oauthor{\bsnm{Jin}, \binits{Y.}},
\oauthor{\bsnm{Sidford}, \binits{A.}},
\oauthor{\bsnm{Tian}, \binits{K.}}:
Sharper rates for separable minimax and finite sum optimization via primal-dual extragradient methods.
CoRR
\textbf{abs/2202.04640}
(2022)
{\href{https://arxiv.org/abs/2202.04640}{{2202.04640}}}
\end{botherref}
\endbibitem

\end{thebibliography}
%% if required, the content of .bbl file can be included here once bbl is generated
%%\input sn-article.bbl

\end{document}